\newtheorem{theorem}{Theorem}[section]
\newtheorem{lemma}[theorem]{Lemma}
\newtheorem{definition}[theorem]{Definition}
\newtheorem{corollary}[theorem]{Corollary}
\newtheorem{claim}[theorem]{Claim}
\newtheorem{remark}[theorem]{Remark}
\newtheorem{proposition}[theorem]{Proposition}
\DeclareMathOperator{\adj}{adj}
\DeclareMathOperator{\cof}{cof}
\title{Three Ways to Count Walks in a Digraph {\color{blue}(Extended Version)}}
\author{Matthew P. Yancey \thanks{Institute for Defense Analyses / Center for Computing Sciences (IDA / CCS), mpyance@super.org}}
\begin{document}
\maketitle

\begin{abstract}
We approach the problem of counting the number of walks in a digraph from three different perspectives: enumerative combinatorics, linear algebra, and symbolic dynamics.
\end{abstract}

{\color{blue}This is the extended version of this manuscript.
We have included several results (Theorem \ref{drazin theorem}, Corollary \ref{adjugate theorem}, and associated statements) that are tangential to the core theme.
All of the proofs are self-contained.
We also provide an extended treatment of our examples.}

\section{Introduction}
A walk in a graph is a ``path'' that is described by a sequence of edges, allowing for repeats.
Counting walks of fixed length, especially those with given starting and ending locations, has applications to Markov chains and mixing times \cite{Leven_Peres_Wilmer}, community detection \cite{Gharan_Trevisan}, and quasi-randomness \cite{Chung}.
A \emph{digraph} (directed graph) is a graph with an orientation applied to each edge; a \emph{walk} in a digraph is a walk in the underlying graph such that the orientation for each appearance of each edge is the same orientation as the ``path.''
In this paper we will discuss practical methods for counting walks in a digraph.
The heart of this paper is the application of directed walks to \emph{regular languages}.

In 1958, Chomskey and Miller \cite{Chomskey_Miller} defined and used regular languages as a method to characterize a family of files that are easy to search for.
Regular languages are the foundation behind how search engines and streaming filters operate.
In their original work, Chomskey and Miller proved that for each regular language $L$, there exists a digraph $D$ with vertex sets $I, F \subseteq V$ such that for all $m$ the set of files of length $m$ in $L$ are in bijection with the set of walks of length $m$ that begin in $I$ and terminate in $F$.
For more background on regular languages, see \cite{Hopcroft_Ullman}.

Let us give formal definitions now.
Let $D$ be a finite ground set $V = \{1,2,\ldots,n\}$ called \emph{vertices} and a multi-set $E$ of ordered pairs from the set $V \times V$ called \emph{arcs}.
A $(s,t)$-walk in a digraph is a finite sequence of arcs $e_1, e_2, \ldots, e_m$ for some $m \geq 0$, where $(u_i, v_i) = e_i \in E$ for all $i$, $s = u_1$, $v_j = u_{j+1}$ for $1 \leq j < m$, and $v_m = t$.
We call $m$ the length of the walk.
For any vertex sets $S, T \subseteq V$, let $(S,T)$-walks refer to the set of walks $w$, where $w$ is a $(s,t)$-walk for some $s \in S$ and $t \in T$.

The adjacency matrix of $D$ is the $n$ by $n$ matrix such that the entry in row $i$ and column $j$ is the multiplicity of $(i,j)$ in $E$.
For a vertex set $S \subseteq V$, let $v_S$ be the characteristic vector for $S$: $v_S$ is a column vector where row $i$ is $1$ if $i \in S$ and $0$ otherwise.
If $A$ is the adjacency matrix of $D$, then the entry in row $i$ and column $j$ of $A^m$ is the number of $(i,j)$-walks of length $m$.
Moreover, $v_S^{T}A^mv_T$ is the number of walks of length $m$ in the set of $(S,T)$-walks.
The \emph{structure function,} $f_L(m) = v_I^{T}A^mv_F$, counts the number of files in a regular language $L$ of a given length.
When the language is clear, we will drop the subscript.

This paper is motivated by previous work in collaboration with Parker and Yancey \cite{Parker_Yancey_Yancey}, which developed several distance functions between regular languages.
The distance functions are based on the asymptotic behavior of $f_L(m)$ as $m$ grows, as described by Rothblum \cite{Rothblum_expansion_of_sums}.
The technical version of this statement is given in Theorem \ref{limiting behavior}.
Although Rothblum has many results on this topic, we will refer to this theorem as ``Rothblum's Theorem.'' 
See \cite{Rothblum_chapter} for a survey of work done by Rothblum and those who would come after.
The goal of the present work is to describe the asymptotics of the structure function from several perspectives, with an emphasis for intuitive results that are constructive in a practical way.
We provide a mix of new results and new proofs to known results.
For example, we give a simpler proof to Rothblum's Theorem (see Theorem \ref{limiting behavior}), and then prove that the asymptotic behavior is based on eigenvectors when it was previously known to rely on generalized eigenvectors (see Theorem \ref{general to regular}).

Chomskey and Miller \cite{Chomskey_Miller} originally claimed that ``Frobenius established... $f(\lambda) = a_1r_1^\lambda + a_2r_2^\lambda + \cdots + a_nr_n^\lambda$,'' where the $r_i$ are the eigenvalues of $A$ (we will be using $\lambda$ to denote eigenvalues everywhere besides this quote).
Moreover, the claim went on to state that there exists an $i$ such that $\|r_i\| > \max_{j \neq i} \|r_j\|$.
Unfortunately, Perron-Frobenius theory requires a set of assumptions that are not satisfied by general digraphs, including digraphs representing regular languages.
A more rigorous approach that compared the asymptotic growth of a regular language to a reference regular language was later developed \cite{Eilenberg,Salomma_Sittola} using generating functions.
Several other works have also applied the asymptotics of the structure function, where the asymptotic growth is sometimes described using generating functions \cite{Kozik,Bodrisky_Gartner_Oertsen_Schwinghammer} and sometimes described using ad-hoc methods \cite{Chang,Chan_Garofalakis_Rastogi,Cui_Dang_Fischer_Ibarra,Hansel_Perrin_Simon}.

Let us quickly recall the basics of Perron-Frobenius theory.
Let $M_{(s,t)}$ be the set of values $m$ such that there exists a $(s,t)$-walk of length $m$.
Perron's \cite{Perron} work used the assumptions that for all ordered pair of vertices $(s,t)$, (1) the set $M_{(s,t)}$ is non-empty and (2) the greatest common divisor among its elements is $1$.
Frobenius' \cite{Frobenius} work only used the first assumption.
A digraph is called \emph{irreducible} if it satisfies the first assumption, \emph{aperiodic} if it satisfies the second assumption, and \emph{primitive} if it satisfies both.
For an arbitrary matrix $M$, we can define an associated digraph $D_M$ where arc $uv \in E(D_M)$ if and only if the row $u$ column $v$ entry of $M$ is nonzero.
We then call $M$ irreducible/aperiodic/primitive if $D_M$ is irreducible/aperiodic/primitive.
A primitive matrix with nonnegative real elements does contain a unique largest eigenvalue, and so Chomskey and Miller's intuition is partially true in this restricted setting.
Moreover, the eigenvalue and each entry in the corresponding eigenvector are positive real numbers.
Similarly, an irreducible matrix with nonnegative real elements does contain a largest eigenvalue that is a positive real value and whose associated eigenvector contains only real \emph{nonnegative} entries.

In this paper, we examine the structure function from three different perspectives: enumerative combinatorics, linear algebra, and symbolic dynamics.
Using enumerative combinatorics, we re-examine the recursive sequence for the structure function established by Chomskey and Miller.
This is the content of Section \ref{enum comb section}.
This tact will allow us to quickly form an intuition for the important concepts.
This section will demonstrate that $f_L(m) = \sum_i \lambda_i^n p_i(m)$, where $p_i(x)$ is a polynomial with degree at most one less than the index of eigenvalue $\lambda_i$ of the adjacency matrix.

We consider linear algebra in Section \ref{lin alg section}.
In this paper, we use \emph{outerproduct} to refer to a matrix multiply of a column vector times a row vector into a single rank $1$ matrix---without conjugating the column vector as is sometimes done.
We will frequently make statements (for example, Theorem \ref{outerproduct} and Proposition \ref{orthogonal}) that look similar to a known fact, except without conjugation or with an inner product replaced with the matrix multiply of a row vector times a column vector.
Do not let this fool you: all of our statements from the perspective of linear algebra apply to the full generality of square matrices with complex entries.

The literature in linear algebra on the asymptotics of $A^m$ revolves around spectral projectors $E_{\lambda_i}$ (see \cite{Higham,Lindqvist}).
Eventually we will use techniques from dynamical systems to connect the asymptotic limit of $A^m$ to an outerproduct of eigenvectors.
Our first sequence of results is to converge the separate trains of thought by proving that the spectral projectors are indeed outerproducts of generalized eigenvectors.

\textbf{Theorem \ref{outerproduct}}
\textit{
Let $A$ be a matrix with eigenvalue $\lambda$ with algebraic multiplicity $m:=m(\lambda)$.
Let $v_{R,1}, \ldots, v_{R,m}$ and $v_{L,1}, \ldots, v_{L,m}$ denote a set of left and right generalized $\lambda$-eigenvectors such that each set is linearly independent.
Let $V_R$ denote the $n \times m$ matrix where column $i$ is $v_{R,i}$, and let $V_L$ denote the $m \times n$ matrix where row $i$ is $v_{L,i}$.
Then $V_LV_R$ is invertible and $E_\lambda = V_R (V_L V_R)^{-1} V_L$.
}

Our presentation of Theorem \ref{outerproduct} is simple and short.
We feel that this will clear up much of the mystery around spectral projectors, which has been a topic of their own interest.
To that end, we examine eight characterizations of a spectral projector from a larger survey by Agaev and Chebotarev \cite{Agaev_Chebotarev} and provide here a half-page proof of those eight (see Section \ref{spec proj as eig}) using Theorem \ref{outerproduct}.
We also relate spectral projectors to pseudo-inverses (Theorem \ref{drazin theorem} is a new proof to a statement in \cite{Meyer} and Corollary \ref{adjugate theorem} is new).

\textbf{Theorem \ref{drazin theorem} and Corollary \ref{adjugate theorem}}
\textit{
The Drazin inverse of $A$ is 
$$ A^D = \sum_{\lambda \neq 0} \sum_{i=0}^{\nu(\lambda)-1} \lambda ^{-1} (I - A \lambda^{-1})^{i}E_\lambda . $$
The adjugate of $A$ is 
$$ \adj(A) = \sum_{\lambda} \sum_{i=0}^{\nu(\lambda)-1} \left(\prod_{\lambda_* \neq \lambda}\lambda_*^{m(\lambda_*)} \right) \lambda^{m(\lambda) - 1 - i} (\lambda I - A)^{i}E_\lambda . $$
If $A$ is invertible, then
$$ A^{-1} = \sum_{\lambda} \sum_{i=0}^{\nu(\lambda)-1} \lambda ^{-1} (I - A \lambda^{-1})^{i}E_\lambda . $$
}

We continue to merge the results from symbolic dynamics and linear algebra by exploring when eigenvectors are sufficient without the heavier machinery of generalized eigenvectors.
It is already known that if a nonnegative matrix $A$ is primitive, then the limit of $A^m$ approaches $\rho^m P^*$, where $P^*$ is an outerproduct of $\rho$-eigenvectors for spectral radius $\rho$ (see \cite{Lind_Marcus,Rothblum_chapter}, equation 7.2.12 of \cite{Meyer}).
Our most surprising result is that eigenvectors are always sufficient.

\textbf{Corollary \ref{asymp growth for lin alg} and Theorem \ref{general to regular}}
\textit{
Let $A$ be a matrix with eigenvalues $\lambda _i$ and spectral radius $\rho$.
Let $S = \{i : \|\lambda_i\| = \rho\}$, $\nu = \max \{ \nu(\lambda_i) : i \in S \}$, and $S \supseteq T = \{i : \|\lambda_i\| = \rho, \nu(\lambda_i) = \nu\}$.
There exist matrices $\widehat{E_i}$ such that 
$$ \lim_{n \rightarrow \infty} \frac{  A^n } { {n \choose \nu - 1} \rho^{n - \nu + 1} }  - \sum_{i \in T} \left(\frac{\lambda_i}{\rho}\right)^{n - \nu + 1} \widehat{E_i}= 0. $$
For each $i$ there exist a basis of right $\lambda_i$-eigenvectors $v_{1,R}, \ldots, v_{t,R}$ and a set of left $\lambda_i$-eigenvectors $v_{1,L}, \ldots, v_{t,L}$ such that $\widehat{E_i} = \sum_{j=1}^t v_{j,R} v_{j,L}$.
If $A$ is a matrix with nonnegative real entries, then there exists a $q$ such that for each $k$, $ \lim_{n \rightarrow \infty} \frac{  A^{qn+k} } { {qn+k \choose \nu - 1} \rho^{qn + k - \nu + 1} } $ exists and converges to a sum of outerproducts of eigenvectors.
}

Finally, we consider results relating to symbolic dynamics in Section \ref{dynamics section}.
While not as general as the results from linear algebra, the results here will provide an intuitive explanation for the effects of sets $I$ and $F$ on the structure function.
In \cite{Parker_Yancey_Yancey}, we established that $A^m$ has similar asymptotic behavior to the structure function when $I$, $F$, and the digraph satisfy a condition called ``trimmed,'' but this is not true in general.

We consider edge shifts, which can be analyzed via the asymptotic behavior of $A^m$.
Previous results from symbolic dynamics only apply to irreducible digraphs.
Our approach for analyzing an edge shift is to construct a family of digraphs with corresponding edge shifts that (1) are easy to study, (2) have a homomorphism into the original edge shift, (3) the intersection of the images of the homomorphisms is asymptotically small when compared to the overall size of the edge shifts, and (4) the set of elements of the edge shift outside the union of the images of the homomorphisms is also asymptotically smaller.
We thus describe a method to break an edge shift into digestible pieces.


An \emph{irreducible component} of a digraph is a maximal sub-digraph that is irreducible; these are also known as \emph{strongly connected components}.
Let $D$ be a digraph with irreducible components $D_1, \ldots D_t$ whose respective adjacency matrices are $A$ and $A_1, \ldots, A_t$.
Let $p_i(x)$ be the characteristic function of $A_i$; the characteristic function of $A$ is then $\prod p_i(x)$.
This establishes a well-known relationship between the eigenvalues of $A$ and the eigenvalues of the $A_i$.
We will additionally require a relationship between the eigenvectors of $A$ and the eigenvectors of the $A_i$.
This lemma may be of independent interest to some (it generalizes a result of Rothblum \cite{Rothblum_eigenspaces} about dominant generalized eigenvectors of a matrix with nonnegative real entries), so we state it here.

\textbf{Lemma \ref{original to components}}
\textit{
Let $M$ be a general matrix with associated digraph $D$ that has irreducible components $D_1, \ldots D_t$.
Assume the irreducible components are ordered such that if the set of $(D_i,D_j)$-walks is non-empty, then $i < j$.
Let $M_1, \ldots, M_t$ be the submatrices of $M$ corresponding to $D_1, \ldots D_t$.
For a vector $v$, let $v^{(j)}$ be the sub-vector induced on $M_j$.
Let $v$ be a fixed generalized right $\lambda$-eigenvector of $A$, and let $i$ be the largest index such that $v^{(i)} \neq 0$.
Under these conditions, if $v$ has index $\nu$, then $v^{(i)}$ is a generalized right $\lambda$-eigenvector of $M_i$ with index at most $\nu$.
}


A dominant eigenvalue is an eigenvalue of matrix $M$ whose magnitude equals the spectral radius of $M$.
A dominant eigenvector is an eigenvector whose associated eigenvalue is dominant.
We have already established that the structure function is a sum of polynomials times an exponential function, and that the coefficients of these terms are based on the projection of the vectors $v_I, v_F$ onto the dominant eigenvectors of $A$.
We will consider the case when the dominant generalized eigenvectors have index $1$, and connect the projection of $v_I, v_F$ onto the dominant eigenvectors with the ``location'' of $I$ and $F$.


The assumption that the set of $(V(D_a),V(D_b))$-walks is empty is equivalent to assuming $\rho$ has index $1$, because of Rothblum's \cite{Rothblum_eigenspaces}  stronger statement:
if $D_{i_1}, D_{i_2}, \ldots, D_{i_r}$ is a maximum set of irreducible components whose spectral radius is equal to the spectral radius of $D$ and satisfy that for all $j < j'$ the set of $\left(V(D_{i_j}),V(D_{i_{j'}})\right)$-walks is nonempty, then the index of $\rho$ is $r$.
We say that vertex $v$ \emph{reaches} vertex set $S$ if the set of $(\{v\},S)$-walks is nonempty, and $v$ is \emph{reached from} $S$ if the set of $(S,\{v\})$-walks is nonempty.
For a vertex set $S$ and matrix $M$, we say that the \emph{$S$-mask of $M$} is a matrix $M'$ whose entries equal $0$ in rows and columns not in $S$ and equal the entries in $M$ otherwise.

\textbf{Theorem \ref{incomparable dominant}}
\textit{
Let $D$ be a digraph with irreducible components $D_1, \ldots D_t$ whose respective adjacency matrices are $A$ and $A_1, \ldots, A_t$.
Let $\rho$ be the spectral radius of $A$, $s$ the number of irreducible components with spectral radius $\rho$, and order the $D_i$ such that $\rho$ is the spectral radius of $A_1, \ldots, A_s$ and not $A_i$ for $i > s$.
Let $p_i$ be the period of $D_i$, and let $C_{i,1}, \ldots C_{i,p_i}$ be the periodic classes of $D_i$.
Set $P = \prod_i p_i$.
Let $V_{i,j}$ be the set of vertices reached or is reachable by $C_{i,j}$ in $A^{p_i}$.\\
Suppose for all $1 \leq a,b \leq s$ the set of $\left(V(D_a),V(D_b)\right)$-walks in $D$ is empty.
For $1 \leq i \leq s$ and $1 \leq j \leq p_i$, let $v_L^{\langle i,j \rangle}, v_R^{\langle i,j \rangle}$ be left, right $\rho^{p_i}$-eigenvectors of the $V_{i,j}$-mask of $A^{p_i}$, normalized such that $v_L^{\langle i,j \rangle} v_R^{\langle i,j \rangle} = 1$.
Under these conditions, for each row vector $w_L$, column vector $w_R$, and integer $k$ there exists $C, \epsilon > 0$ such that 
$$ w_L A^{Pm + k} w_R = \left(q(m) + \sum_{i=1}^s \sum_{j =1 }^{p_i} (w_L v_R^{\langle i,j \rangle})(v_L^{\langle i,j+k \rangle}w_R) \right) \rho^{Pm+k}, $$
where $q(m) < C ( 1 - \epsilon)^m$.
}

Theorem \ref{incomparable dominant} may appear similar to Corollary \ref{asymp growth for lin alg} and Theorem \ref{general to regular}.
However, there is an important distinction: by Perron-Frobenius theory the eigenvectors in Theorem \ref{incomparable dominant} are known to have all nonnegative entries. 
Hence Theorem \ref{incomparable dominant} establishes the coefficients of the structure function as a sum of nonnegative numbers based on the location of sets $I$ and $F$.
Moreover, the coefficients will be nonzero (and hence the structure function will have the same asymptotics as $A^m$) if and only if $I$ and $F$ intersect coordinates with non-zero entries in the associated eigenvectors.
This justifies our comments about the ``location'' of sets $I$ and $F$, which we will continue to elaborate on.

Theorem \ref{incomparable dominant} is quite practical, as it applies to many regular languages.
For example, it applies to any regular language with an accepting state.
As a second example, if a regular language $L$ over alphabet $\Sigma$ is such that the spectral radius of the digraph representing $L$ is $|\Sigma|$, then the digraph satisfies the assumptions of Theorem \ref{incomparable dominant}.
The compliment of a regular language is a regular language, and it is known \cite{Parker_Yancey_Yancey} that a regular language or its compliment will correspond to a digraph with spectral radius $|\Sigma|$.
Hence, Theorem \ref{incomparable dominant} applies to at least ``half'' of all regular languages.
Theorem \ref{incomparable dominant} also applies to Markov chains and stochastic matrices.

We now establish that the ``right location'' for $I$ ($F$) is before (after) the relative irreducible components.
Rothblum \cite{Rothblum_eigenspaces} proved this result, but only for nonnegative real matrices and eigenvectors that are dominant.

\textbf{Corollary \ref{up set, down set}}
\textit{
Let $M$ be a general matrix with associated digraph $D$ that has irreducible components $D_1, \ldots D_t$.
Let $M_1, \ldots, M_t$ be the submatrices of $M$ corresponding to $D_1, \ldots D_t$.
Let $v_R$ and $v_L$ be right and left generalized $\lambda$-eigenvectors of $M$.
\begin{itemize}
	\item If $v_R$ is nonzero in coordinate $u$, then there exists a path from $u$ to $w$, where $w \in D_i$, $\lambda$ is an eigenvalue of $M_i$, and $v_R$ is nonzero at $w$.
	\item If $v_L$ is nonzero in coordinate $u$, then there exists a path from $w$ to $u$, where $w \in D_j$, $\lambda$ is an eigenvalue of $M_j$, and $v_L$ is nonzero at $w$.
\end{itemize}
}

The paper is broken up as follows.
In Section 2 we illustrate the above theorems with three examples.
In Section 3 we cover the enumerative combinatorics approach.
The advantages of this section are its simplicity and brevity.
In Section 4 we cover the linear algebra approach.
The advantage of this section is its generality.
In Section 5 we cover the symbolic dynamics approach.
The advantage of this section is the intuitive description it provides for the effects of sets $I$ and $F$.
The full purpose of this paper is how the different sections interact.
For example, Corollary \ref{up set, down set} enhances the beauty of Corollary \ref{asymp growth for lin alg} and Theorem \ref{general to regular} as much as it does Theorem \ref{incomparable dominant}.

\section{Examples}\label{ex section}
We will analyze the regular languages corresponding to the regular expressions $a^*ba^*b(a|b)^*$, $(a|b)^*\left( \left(d(e|f)^*\right)|\left(c(d(e|f|g|h)\right)^*\right)$, and \\
$\left(a(b|c)^*\right) | \left( \left( (b|c)(a|b) \right)^*(b|c|d) \right)$.
The digraph Chomskey and Miller use to analyze a regular language is known as a Deterministic Finite Automata (DFA).
The DFA for the three examples are illustrated in Figure \ref{example figure}.
The set of vertices in $I$ are known as the initial states, and the set $F$ is known as the set of final states.
All three examples are trimmed DFA, and hence $A^n$ will have the same asymptotic behavior as $f(n)$.
However, the first two examples easily extend to the general case.

\begin{center}
\begin{figure}
\includegraphics[width=14cm]{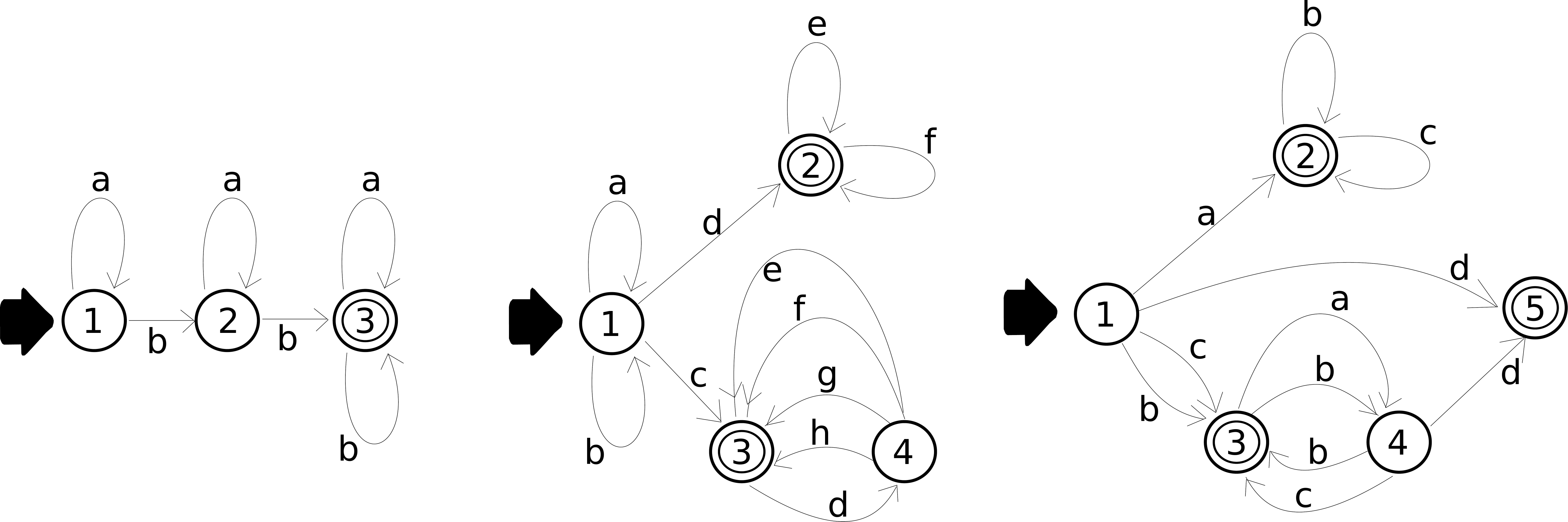}
\caption{Illustrations of the DFA for the three examples in Section \ref{ex section}.  They are DFA representing the regular expressions $a^*ba^*b(a|b)^*$, $(a|b)^*\left( \left(d(e|f)^*\right)|\left(c(d(e|f|g|h)\right)^*\right)$, and 
$\left(a(b|c)^*\right) | \left( \left( (b|c)(a|b) \right)^*(b|c|d) \right)$, in order, from left to right.
The thick arrow indicates the unique initial state for each DFA, and double circles signify a final state.
The edge labels are used to construct the DFA from a regular expression and otherwise are irrelevant.
}
\label{example figure}
\end{figure}
\end{center}

\textbf{Example 1.} Consider the regular language defined by the regular expression $a^*ba^*b(a|b)^*$.
It is all words over the alphabet $\{a,b\}$ such that $b$ appears at least twice.
It should be clear that $f(n) = 2^n - n - 1$.
We will recreate this formula using arguments from Section \ref{enum comb section}.
This DFA has initial vector, adjacency matrix, and final vector as follows
$$ \left( \begin{array}{c} 1 \\ 0 \\ 0 \\ \end{array} \right), \left( \begin{array}{ccc} 1& 1 & 0 \\ 0 & 1 & 1 \\ 0 & 0 & 2 \\ \end{array} \right), \left( \begin{array}{c} 0 \\ 0 \\ 1 \\ \end{array} \right). $$
Because $(A - 2 I) (A - I)^2 = 0$, we get the recurrence relation 
$$ f(n) - 4 f(n-1) + 5f(n-2) - 2 f(n-3) = 0, n \geq 3,$$
and we see that $f(n) = (a n + b) 1^n + c 2^n$ for coefficients $a,b,c$.
Using initial conditions $f(0) = v_I ^T v_F = 0$, $f(1) = v_I ^T A v_F = 0$, $f(2) = v_I ^T A^2 v_F = 1$, we can solve to get $c = 1$, $a = b = -1$, which matches our original formula.

\textbf{Example 2.} Consider the regular language defined by the regular expression 
$$(a|b)^*\left( \left(d(e|f)^*\right)|\left(c(d(e|f|g|h)\right)^*\right).$$
This regular expression can be represented by a DFA with initial vector, adjacency matrix, and final vector as follows
$$\left( \begin{array}{c} 1 \\ 0 \\ 0 \\ 0 \\ \end{array} \right), \left( \begin{array}{c|c|cc} 2 & 1 & 1 & 0 \\\hline 0 & 2 & 0 & 0 \\ \hline 0 & 0 & 0 & 1 \\ 0 & 0 & 4 & 0 \\ \end{array} \right), \left( \begin{array}{c} 0 \\ 1 \\ 1 \\0 \\ \end{array} \right)  .$$
The matrix has three irreducible components, which correspond to vertices $\{1\}, \{2\}, \{3,4\}$.
The lines drawn in matrix $A$ partition it into $9$ sub-matrices according to the irreducible components.
Let $B_{i,j}$ be the submatrix that is $i$ from the top and $j$ from the right (so $B_{1,3} = (1, 0)$ and $B_{3,2} = (0, 0)^T$).
The irreducible components are ordered such that the matrix is in Frobenius Normal Form, which is when $B_{i,j} = 0$ for $i > j$.

$A$ has $2$ as an eigenvalue with algebraic multiplicity $3$ and $-2$ as an eigenvalue with algebraic multiplicity $1$.
The geometric multiplicity of $2$ is $1$, and a $2$-eigenvector is $(1,0,0,0)^T$.
Two generalized $2$-eigenvectors of $A$ are $(100, 1, 0, 0)^T$ and $(100, 0, 1, 2)^T$, each with index $2$, and these three vectors form a basis for the $2$-eigenspace.
Lemma \ref{original to components} claims that $(1)^T$ is a $2$-eigenvector of $B_{1,1}$, that $(1)^T$ is a generalized $2$-eigenvector of $B_{2,2}$ with index at most $2$, and that $(1,2)^T$ is a generalized eigenvector of $B_{3,3}$ with index at most $2$.
Left $2$-eigenvectors of $A$ include $(0,1,0,0)$ and $(0,0,2,1)$, and $(4,100,1,0)$ is a left generalized $2$-eigenvector with index $2$.
Theorem \ref{outerproduct} states that the spectral projector for eigenvalue $2$ is then 
$$ E_2 = \left( \begin{array}{ccc} 1 & 100 & 100 \\ 0 & 1 & 0 \\ 0 & 0 & 1 \\ 0 & 0 & 2 \\ \end{array} \right)
   \left(   \left( \begin{array}{cccc} 0 & 1 & 0 & 0 \\ 0 & 0 & 2 & 1 \\ 4 & 100 & 1 & 0 \\ \end{array} \right)  \left( \begin{array}{ccc} 1 & 100 & 100 \\ 0 & 1 & 0 \\ 0 & 0 & 1 \\ 0 & 0 & 2 \\ \end{array} \right) \right)^{-1}
   \left( \begin{array}{cccc} 0 & 1 & 0 & 0 \\ 0 & 0 & 2 & 1 \\ 4 & 100 & 1 & 0 \\ \end{array} \right) $$
$$  = \left( \begin{array}{cccc} 1 & 0 & 1/8 & -1/16 \\ 0 & 1 & 0 & 0 \\ 0 & 0 & 1/2 & 1/4 \\ 0 & 0 & 1 & 1/2 \\ \end{array} \right). $$
Similarly the spectral projector for eigenvalue $-2$ is 
$$ E_{-2} =  \left( \begin{array}{cccc} 0 & 0 & -1/8 & 1/16 \\ 0 & 0 & 0 & 0 \\ 0 & 0 & 1/2 & -1/4 \\ 0 & 0 & -1 & 1/2 \\ \end{array} \right). $$
We leave it to the reader to confirm the various properties of Theorem \ref{define spectral projector} hold, such as $E_j ^2 = E_j, E_2 E_{-2} = E_{-2}E_2 = 0$, $A E_j = E_j A$, $E_2 + E_{-2} = I$.
By Theorem \ref{limiting behavior}, we have that 
\begin{eqnarray*}
 A^n  	& = & \sum_\lambda \sum_{i=0}^{\nu(\lambda)-1} {n \choose i} \lambda^{n-i} (A - \lambda I)^{i} E_\lambda \\
	& = &  {n \choose 0} 2^n E_{2} + {n \choose 1}2^{n-1}(A - 2 I)E_2 + {n \choose 0} (-2)^n E_{-2} \\
	& = & 2^n\left(  \left( \begin{array}{cccc} 0 & 1/2 & 1/4 & 1/8 \\ 0 & 0 & 0 & 0 \\ 0 & 0 & 0 & 0 \\ 0 & 0 & 0 & 0 \\ \end{array} \right)n +  \left( \begin{array}{cccc} 1 & 0 & 1/8 & -1/16 \\ 0 & 1 & 0 & 0 \\ 0 & 0 & 1/2 & 1/4 \\ 0 & 0 & 1 & 1/2 \\ \end{array} \right) \right) \\
	& & + (-2)^n \left( \begin{array}{cccc} 0 & 0 & -1/8 & 1/16 \\ 0 & 0 & 0 & 0 \\ 0 & 0 & 1/2 & -1/4 \\ 0 & 0 & -1 & 1/2 \\ \end{array} \right).
\end{eqnarray*}
This confirms that $A^n = \sum_\lambda \lambda^n p_\lambda(n)$, where $p_2(x) = \frac12(A - 2 I)E_2 x  + E_2$ and $p_{-2}(x) = E_{-2}$.
For $i \in \{0,1\}$ we have that $A^{2n+i} = 2^{2n+i} S_i(2n+i)$, where $S_0(x) = \frac12(A - 2 I)E_2 x  + E_2 + E_{-2}$ and $S_1(x) = \frac12(A - 2 I)E_2 x  + E_2 - E_{-2}$.
By Corollary \ref{asymp growth for lin alg} and Theorem \ref{general to regular}, we see that 
$$ \lim_{n \rightarrow \infty} \frac{A^n}{{n \choose 1} 2^{n-1}} = \widehat{E_2} = (A - 2 I)E_2 = \left( \begin{array}{cccc} 0 & 1 & 1/2 & 1/4 \\ 0 & 0 & 0 & 0 \\ 0 & 0 & 0 & 0 \\ 0 & 0 & 0 & 0 \\ \end{array} \right) $$
$$ = \left( \begin{array}{c} 1 \\ 0 \\ 0 \\ 0 \\ \end{array} \right) (0, 1, 1/2, 1/4), $$
where $(1,0,0,0)^T$ is a $2$-eigenvector and $(0,1,1/2,1/4)$ is a left $2$-eigenvector ($-2$-eigenvectors are not used because $\nu(2) = 2 > 1 = \nu(-2)$).

\textbf{Example 3.} Consider the regular language defined by the regular expression 
$$\left(a(b|c)^*\right) | \left( \left( (b|c)(a|b) \right)^*(b|c|d) \right).$$
This regular expression can be represented by a DFA with initial vector, adjacency matrix, and final vector as follows
$$\left( \begin{array}{c} 1 \\ 0 \\ 0 \\ 0 \\ 0\\ \end{array} \right), \left( \begin{array}{c|c|cc|c} 0 & 1 & 2 & 0 & 1 \\\hline 0 & 2 & 0 & 0 & 0 \\ \hline 0 & 0 & 0 & 2 & 0 \\ 0 & 0 & 2 & 0 & 1 \\ \hline 0 &0 &0 &0 &0 \\  \end{array} \right), \left( \begin{array}{c} 0 \\ 1 \\ 1 \\ 0 \\ 1 \\ \end{array} \right)  .$$
The matrix has four irreducible components, which correspond to vertices $\{1\}, \{2\}, \{3,4\}, \{5\}$.
The lines drawn in matrix $A$ partition it into $16$ sub-matrices according to the irreducible components.
Let $B_{i,j}$ be defined as before, and let $A_i = B_{i,i}$.

The spectral radius of $A$ is $2$, and $2$ is the spectral radius of $A_2, A_3$ but not $A_1, A_4$.
There are no ($\{3,4\}$, $\{2\}$)-walks or $(\{2\}, \{3,4\})$-walks, so the assumptions of Theorem \ref{incomparable dominant} are satisfied.
We have $p_2 = 1$ and $p_3 = 2$.
The only vertex reached by $\{2\}$ is $\{2\}$, and the vertices that reach $\{2\}$ are $\{1,2\}$.
So $V_{2,1} = \{1,2\}$, and thus to calculate $v_L^{\langle2,1\rangle}, v_R^{\langle2,1\rangle}$ we need to consider the $\{1,2\}$-mask of $A^1$, which is $ \left( \begin{array}{ccccc} 0 & 1 & 0 & 0 & 0 \\ 0 & 2 & 0 & 0 & 0 \\ 0 & 0 & 0 & 0 & 0 \\ 0 & 0 & 0 & 0 & 0 \\ 0 &0 &0 &0 &0 \\  \end{array} \right) $.
The unique dominant eigenvectors (after appropriate scaling) of this matrix are $v_L^{\langle2,1\rangle} = (0, 1,0,0,0), v_R^{\langle2,1\rangle} = ( 1/2,  1, 0, 0, 0)^T$.

\begin{center}
\begin{figure}
\begin{center}
\includegraphics[width=5cm]{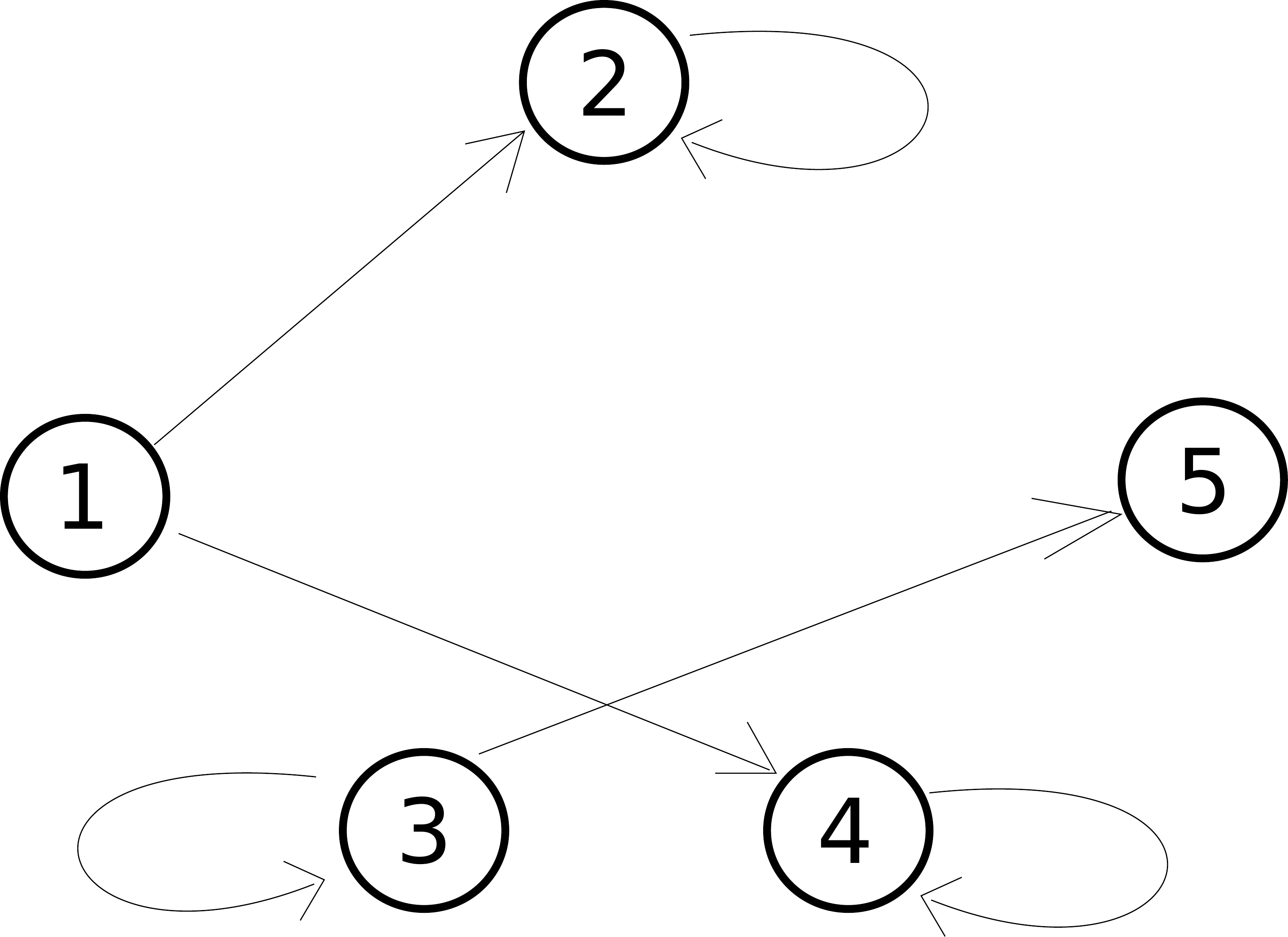}

\caption{For any matrix, we can construct an associated digraph where directed edge $ij$ exists only if the entry in row $i$ column $j$ is nonzero.
Using this associated digraph, we can generalize terms such as ``irreducible component'' and ``reached by'' from digraphs to matrices.
The above image is the digraph associated to $A^2$ in the third example of Section \ref{ex section}.}
\label{example 3 figure}
\end{center}
\end{figure}
\end{center}

The periodic classes of the irreducible component $\{3,4\}$ in $A$ are $\{3\}$ and $\{4\}$.
To calculate $v_L^{\langle3,1\rangle}, v_R^{\langle3,1\rangle}, v_L^{\langle3,2\rangle}, v_R^{\langle3,2\rangle}$, we consider 
$$ A^2 = \left( \begin{array}{ccccc} 0 & 2 & 0 & 4 & 0 \\ 0 & 4 & 0 & 0 & 0 \\  0 & 0 & 4 & 0 & 2 \\ 0 & 0 & 0 & 4 & 0 \\  0 &0 &0 &0 &0 \\  \end{array} \right) .$$
The digraph associated to $A^2$ is illustrated in Figure \ref{example 3 figure}.
The vertices reaching $\{3\}$ in $A^2$ are only $\{3\}$, and the vertices reached by $\{3\}$ in $A^2$ are $\{3, 5\}$.
Thus $V_{3,1} = \{3,5\}$, and the $V_{3,1}$-mask of $A^2$ is then $\left( \begin{array}{ccccc} 0 & 0 & 0 & 0 & 0 \\ 0 & 0 & 0 & 0 & 0 \\  0 & 0 & 4 & 0 & 2 \\ 0 & 0 & 0 & 0 & 0 \\  0 &0 &0 &0 &0 \\  \end{array} \right)$.
It follows that $v_L^{\langle3,1\rangle} = (0, 0,1,0,1/2), v_R^{\langle3,1\rangle} = ( 0,  0, 1, 0, 0)^T$.
The vertices reaching $\{4\}$ in $A^2$ are $\{1,4\}$, and the vertices reached by $\{4\}$ in $A^2$ are $\{4\}$.
Therefore $V_{3,2} = \{1,4\}$, and the $V_{3,2}$-mask of $A^2$ is $\left( \begin{array}{ccccc} 0 & 0 & 0 & 4 & 0 \\ 0 & 0 & 0 & 0 & 0 \\  0 & 0 & 0 & 0 & 0 \\ 0 & 0 & 0 & 4 & 0 \\  0 &0 &0 &0 &0 \\  \end{array} \right) .$
It follows that $v_L^{\langle3,2\rangle} = (0, 0,0,1,0), v_R^{\langle3,2\rangle} = ( 1,  0, 0,  1, 0)^T$.

We may now apply Theorem \ref{incomparable dominant} to say that for large positive integer $n$, 
\begin{eqnarray*}
 f(2n)		& \approx	& 2^{2n} \left( (v_I^T v_R^{\langle2,1\rangle})(v_L^{\langle2,1\rangle} v_F) + (v_I^T v_R^{\langle3,1\rangle})(v_L^{\langle3,1\rangle} v_F) + (v_I^T v_R^{\langle3,2\rangle})(v_L^{\langle3,2\rangle} v_F) \right) \\
		& = 		& 2^{2n} ( \frac{1}{2} \times 1 + 0 \times \frac{3}{2} + 1 \times 0 ) \\
		& = 		& 2^{2n-1}, \\
 f(2n+1)	& \approx	& 2^{2n+1}  \left( (v_I^T v_R^{\langle2,1\rangle})(v_L^{\langle2,1\rangle} v_F) + (v_I^T v_R^{\langle3,1\rangle})(v_L^{\langle3,2\rangle} v_F) + (v_I^T v_R^{\langle3,2\rangle})(v_L^{\langle3,1\rangle} v_F) \right) \\
		& = 		& 2^{2n+1} ( \frac{1}{2} \times 1 + 0 \times 0 + 1 \times \frac{3}{2} ) \\
		& =		& 2^{2n+2}.
\end{eqnarray*}
Importantly, the vectors $v_L^{\langle i, j\rangle}, v_R^{\langle i, j\rangle}$ have all nonnegative real entries, and therefore the contribution of the entries in $I$ and $F$ is intuitive.
Our results are consistent with Corollary \ref{up set, down set}: for the asymptotic behavior of $A^n$ to match that of $f(n) = v_I^T A^n v_F$ the coefficients we calculate above must be nonzero, which happens only when $F$ is reached by the irreducible components $A_2, A_3$ with largest spectral radius and $I$ reaches them (which is a weaker condition than being trimmed).
\section{Enumerative Combinatorics} \label{enum comb section}

Let $L$ be a regular language with structure function $f := f_L$ and associated digraph $D$ whose adjacency matrix is $A$.
Recall that there exist vectors $v_I$ and $v_F$ such that $f(m) = v_I^T A^m v_F$.
Let $p_A(x) = \left| A - xI \right|$ be the characteristic polynomial for $A$.
Chomskey and Miller \cite{Chomskey_Miller} noted that the Cayley-Hamilton theorem states that $p_A(A) = 0$, and therefore $p_A$ describes a recursive characterization for $f$.

It is also well known that there exists a minimum polynomial $m_A$ such that $m_A(A) = 0$, which satisfies $m_A | p_A$.
If $m_A(x) = \prod_i (\lambda_i - x)^{\nu(\lambda_i)}$, where $\lambda_i \neq \lambda_j$ when $i \neq j$, then $\nu(\lambda_i)$ is called the \emph{index} of $\lambda_i$.
Let $N = \sum_i \nu(\lambda_i)$, $m_A(x) = \sum_{j=0}^N a_j x^j$, and $a_N = 1$.
When $m \geq N$ we have that 
\begin{eqnarray*}
f(m)			& = & v_{I}^T A^m v_F \\
			& = & v_{I}^T \left(- \sum_{i=0}^{N-1} a_i A^{m-N+i}\right)  1_F \\
			& = & \sum_{i=0}^{n-1} -a_i \left(v_{I}^T A^{m-N+i} v_F\right) \\
			& = & \sum_{i=0}^{n-1} -a_i f(m-N+i).
\end{eqnarray*}

So the values of $f(m)$ are defined by a linear homogeneous recurrence relation with constant coefficients whose characteristic polynomial is $m_A$.
Therefore $f(m) = \sum_i \lambda_i^n p_i(m),$ where $p_i(x)$ is a polynomial with degree at most $\nu(\lambda_i)-1$.
The coefficients of $p_i$ depend on $v_I, v_F$; sometimes $p_i(x) = 0$.
To better understand such values, we turn to other fields of mathematics.

Before we turn to a different subject area, let us first establish how this result implies one of Rothblum's foundational theorems.
The recursive formula holds for any pair of vectors $v_I, v_F$, so by setting $I=\{a\}$ and $F = \{b\}$ we create a formula for the entry in row $a$ and column $b$ of $A^m$.
For fixed $a,b$, let the coefficients of $p_i(x)$ be denoted as $c_{((a,b),i,j)}$ such that $p_i(x) = \sum_{j=0}^{\nu(\lambda_i)-1} c_{((a,b),i,j)} x^j$.

Order the eigenvalues of $A$ such that $\|\lambda_1\| \geq \|\lambda_2\| \geq \cdots$, let $\rho = \|\lambda_1\|$, and let $s$ be such that $\|\lambda_1\| =  \cdots = \|\lambda_s\| > \|\lambda_{s+1}\|$.
It is well known that the set of eigenvalues of $A$ is the union of the set of eigenvalues for the adjacency matrix of each irreducible component.
Perron \cite{Perron} showed that for each irreducible digraph $D_i$ with adjacency matrix $A_i$ there exists an integer $p_i$ such that if $\lambda'$ is an eigenvalue of $A_i$ such that $\|\lambda'\|\geq\rho$, then $\lambda' = \rho e^{2\pi j/p_i}$ for some integer $j$.
By taking $P$ to be the least common multiple of the $p_i$, we see that for $1 \leq i \leq s$ we have that $\lambda_i^P = \rho^P$.
We have thus given a short proof to Rothblum's theorem \cite{Rothblum_expansion_of_sums} that for nonnegative real matrix $A$ with spectral radius $\rho$, there exists an integer $P$ and matrix polynomials $S_0(x), \ldots, S_{P-1}(x)$ such that $\lim_{m\rightarrow \infty} (A/\rho)^{Pm+k} - S_k(Pm+k) = 0$.
Moreover, we can additionally show that if $S_k(x) = \sum_{j} M_j x^j$ for matrices $M_j$, then the entry in row $a$ column $b$ of $M_j$ is $\sum_{i=1}^s \left(\frac{\lambda_i}{\rho}\right)^k c_{((a,b),i,j)}$.
A second proof of Rothblum's result will appear in Section \ref{spec proj poly}.

\section{Linear Algebra} \label{lin alg section}
\subsection{Background}

In this Section we assume all matrices are square.
A generalized (right) $\lambda$-eigenvector of a matrix $A$ is a vector $v$ such that $(A - \lambda I)^k v = 0$ for some $k$.
A generalized left $\lambda$-eigenvector is a row vector $v$ such that $v (A - \lambda I)^k  = 0$ for some $k$.
The minimum $k$ such this is true is called the \emph{index} of $v$ and is denoted by $\nu(v)$.
As an abuse of notation, let $\nu(\lambda)$ denote the maximum index among generalized $\lambda$-eigenvectors.
If $w = (A - \lambda I)^{t} v$ for $t < \nu(v)$, then $w$ is a generalized $\lambda$-eigenvector with index $\nu(v) - t$.
The set of eigenvectors are the set of generalized eigenvectors with index $1$.

If $p_A(x)$ is the characteristic function for $A$, and $m(\lambda)$ is the multiplicity of $\lambda$ as a root of $p_A(x)$, then the generalized $\lambda$-eigenvectors form a space whose dimension is $m(\lambda)$.
Moreover, if $A$ is a square matrix with $n$ rows, then there exists a basis of $\mathbb{C}^n$ using generalized eigenvectors of $A$.
Let $m_A(x)$ be the minimal polynomial for $A$; we define this to be $m_A(x) = \prod_\lambda(x - \lambda)^{\nu(\lambda)}$.
We have that $\nu(\lambda) \leq m(\lambda)$, so $m_A(x) | p_A(x)$.

We first give two statements that will be useful later.

\begin{claim} \label{everything is an outerproduct}
Let $A$ be a matrix whose row space is spanned by linearly independent row vectors $u_1, u_2, \ldots, u_k$ and whose column space is spanned by column vectors $v_1, v_2, \ldots, v_k$.
Then there exists column vectors $v_1', v_2', \ldots, v_k'$ such that $A = \sum_{i=1}^k v_i' u_i$.
\end{claim}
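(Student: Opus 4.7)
The plan is to observe that the claim is really just the existence of a rank factorization of $A$ with prescribed right-hand factor, and to construct the decomposition by expanding each row of $A$ in the basis $u_1,\ldots,u_k$. Since $u_1,\ldots,u_k$ are linearly independent and span $\operatorname{row}(A)$, they form a basis of the row space, so each row $A_i$ of $A$ admits a unique expansion $A_i = \sum_{j=1}^{k} c_{ij}\, u_j$ for some scalars $c_{ij}$. This is the only place where the hypothesis on the $u_j$ is actually used.

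From here the construction writes itself. I would define $v_j' \in \mathbb{C}^n$ to be the column vector whose $i$-th entry is $c_{ij}$, and then verify $A = \sum_{j=1}^k v_j'\, u_j$ by comparing the two matrices row-by-row: the $i$-th row of the outerproduct $v_j' u_j$ is $c_{ij}\, u_j$, so summing over $j$ recovers $A_i$ by the choice of the $c_{ij}$.

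I do not expect a real obstacle here, since the content is essentially a restatement of rank factorization. The only conceptual remark is that the hypothesis on the column-space spanning set $v_1,\ldots,v_k$ plays no direct role in the existence proof; it only pins down $\operatorname{rank}(A) = k$ and is compatible with the conclusion. Indeed, writing $A = V'U$ where $V'$ is the $n \times k$ matrix with columns $v_j'$ and $U$ is the $k \times n$ matrix with rows $u_j$, the identities $\operatorname{rank}(A) = k = \operatorname{rank}(U)$ combined with $\operatorname{rank}(V'U) \leq \operatorname{rank}(V')$ force $\operatorname{rank}(V') = k$, so the constructed $v_j'$ are themselves a basis of $\operatorname{col}(A)$, consistent with the symmetry of the hypotheses.
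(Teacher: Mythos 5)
Your proof is correct. It is the same underlying factorization as the paper's — both arguments produce the matrix identity $A = V'U$ where $U$ has rows $u_1,\ldots,u_k$ and $V'$ collects the coefficients of the rows of $A$ in the basis $\{u_j\}$ of the row space — but your execution is more direct. The paper instead constructs an invertible $Q$ with $u_iQ = a_i$ (the standard basis row vectors), reads off $v_i'$ as the $i$-th column of $AQ$, and then undoes the change of basis via $A = (\sum_i v_i' a_i)Q^{-1} = \sum_i v_i' u_i$; unwinding that computation shows its $v_i'$ coincide with yours (the $i$-th column of the coefficient matrix $C$ in $A = CU$). Your row-by-row expansion avoids introducing $Q$ and makes it transparent that linear independence of the $u_j$ is the only hypothesis used. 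Your closing observation is also accurate and worth retaining: the hypothesis on $v_1,\ldots,v_k$ is not needed for existence (the paper likewise never uses it beyond noting that the column space of $AQ$ is spanned by the $v_i$), and the rank count $\operatorname{rank}(V') = k$ shows the constructed $v_j'$ automatically form a basis of the column space, which is how the claim gets applied later in Theorem \ref{general to regular}.
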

\begin{proof}
Let $a_i$ denote the row vector that is $1$ in coordinate $i$ and $0$ in all other coordinates.
Because the $u_1, u_2, \ldots, u_k$ are linearly independent, there exists an invertible linear transformation $Q$ such that $u_i Q = a_i$.
Consider the matrix $A Q$; its rows space is spanned by $a_i$ for $1 \leq i \leq k$, and its column space is spanned by $v_1, v_2, \ldots, v_k$.
Let $v_i'$ denote column $i$ in the matrix $A Q$, so that $AQ = \sum_i v_i' a_i$.
The claim then follows from $A = AQQ^{-1} = \left(\sum_i v_i' a_i\right) Q^{-1} = \sum_i v_i' a_i Q^{-1} = \sum_i v_i' u_i$.
\end{proof}

For completeness, we include a proof of the following statement.

\begin{theorem}[Theorem VII.1.3 of \cite{Dunford_Schwartz}]
Let $p_1(x)$ and $p_2(x)$ be polynomials.
We have that $p(A) = q(A)$ if and only if $m_A(x) | (p_1(x) - p_2(x))$. 
\end{theorem}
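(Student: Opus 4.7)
Proof Proposal. Setting $q(x) := p_1(x) - p_2(x)$, the claim reduces at once to the single statement
\[
  q(A) = 0 \iff m_A(x) \mid q(x),
\]
so I would prove this equivalent form. Before doing so I would pause to verify, from the definition $m_A(x) = \prod_\lambda (x-\lambda)^{\nu(\lambda)}$ given in the excerpt, that $m_A(A) = 0$. Since the generalized eigenvectors span $\mathbb{C}^n$, it suffices to check $m_A(A)v = 0$ for a generalized $\lambda$-eigenvector $v$. The factor $(A-\lambda I)^{\nu(\lambda)}$ annihilates $v$ by definition of $\nu(\lambda)$, and the other factors commute with this one, so $m_A(A)v = 0$.

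The easy direction is then immediate: if $m_A \mid q$, write $q = m_A \cdot h$ and conclude $q(A) = m_A(A)\,h(A) = 0$.

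For the nontrivial direction, assume $q(A) = 0$; I must show $(x-\lambda)^{\nu(\lambda)} \mid q(x)$ for every eigenvalue $\lambda$ of $A$. Fix such a $\lambda$ and pick a generalized $\lambda$-eigenvector $v$ of maximal index $k := \nu(\lambda)$. Factor $q(x) = (x-\lambda)^{e}\, r(x)$ with $r(\lambda) \neq 0$; I aim to show $e \geq k$. The plan is to track how each factor of $q(A)$ acts on $v$. The key observation is that for any $\mu \neq \lambda$ the operator $(A - \mu I)$ preserves the index of a generalized $\lambda$-eigenvector: indeed $(A-\mu I)v = (A-\lambda I)v + (\lambda - \mu)v$, so
\[
  (A-\lambda I)^{k-1}(A-\mu I)v = (\lambda-\mu)(A-\lambda I)^{k-1}v \neq 0,
\]
while $(A-\lambda I)^{k}(A-\mu I)v = 0$. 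Iterating over all factors $(x - \mu)$ with $\mu \neq \lambda$ appearing in $r(x)$, the vector $r(A)v$ is still a generalized $\lambda$-eigenvector of index exactly $k$. Then $(A-\lambda I)^{e}\,r(A)v = 0$ forces $e \geq k$, as required. Doing this for every eigenvalue $\lambda$ yields $m_A \mid q$.

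The main obstacle is the last step: the careful bookkeeping showing that $(A-\mu I)$ with $\mu \neq \lambda$ does not collapse the index of a generalized $\lambda$-eigenvector. Once that lemma is in hand, the theorem falls out cleanly. Everything else is either definitional (the fact that $m_A(A)=0$) or formal manipulation of polynomials in a single commuting operator.
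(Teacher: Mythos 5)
Your proposal is correct and takes essentially the same route as the paper: both reduce to $q(A)=0 \iff m_A \mid q$, verify $m_A(A)=0$ on a basis of generalized eigenvectors, and for the converse show that the factors $(A-\mu I)$ with $\mu\neq\lambda$ cannot annihilate (or collapse the index of) a generalized $\lambda$-eigenvector, so the exponent of $(x-\lambda)$ must be at least $\nu(\lambda)$. The only cosmetic difference is that the paper first applies $(A-\lambda I)^k$ to a vector of index $k+1$ to land on an honest eigenvector, whereas you keep a vector of full index and track that the other factors preserve it.
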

\begin{proof}
Without loss of generality, assume that $p_2 \cong 0$.
We wish to show that $p_1(A) = 0$ if and only if $m_A(x) | p_1(x)$.

Clearly a matrix $M$ equals $0$ if and only if $Mv = 0$ for all vectors $v$.
Let $v_1, \ldots, v_n$ be generalized eigenvectors of $A$ that form a basis.
Because $A$ and $I$ commute with themselves, we can re-arrange the terms of $m_A(A)$ and $p_1(A)$.
So if $v_i$ is a generalized $\lambda$-eigenvector, then 
$$m_A(A)v_i = \left(\prod_{\lambda' \neq \lambda}(A - I \lambda')^{\nu(\lambda')}\right) (A - I \lambda)^{\nu(\lambda)}v_i = 0.$$ 
Since $m_A(A)$ sends each element of a basis to $0$, it equals $0$.
Therefore if  $m_A(x) | p_1(x)$, then $p_1(A) = 0$.

Now suppose that $\lambda$ is a root of $p_1(x)$ with multiplicity $k < \nu(\lambda)$.
Let $v'$ be a generalized $\lambda$-eigenvector with index $k+1$, and let $v = (A - \lambda I)^k v'$.
By above, $v$ is a $\lambda$-eigenvector.
Therefore 
$$ p_1(A) v' = \left(\prod_{c \neq \lambda} (A - c I)^{k_c} \right) (A - \lambda I)^k v' = \prod_{c \neq \lambda} (A - c I)^{k_c}v = \prod_{c \neq \lambda} (\lambda - c)^{k_c} v \neq 0.$$
Therefore if $p_1(A) = 0$, then $m_A(x) | p_1(x)$.
\end{proof}

\begin{corollary} \label{zeroes to derrivatives}
Let $p_1(x)$ and $p_2(x)$ be two polynomials, and let $A$ be a matrix.
If for each eigenvalue $\lambda$ of $A$ we have that $\frac{d^i}{dx^i}p_1(\lambda) = \frac{d^i}{dx^i} p_2(\lambda)$ for all $0 \leq i \leq \nu(\lambda)-1$, then $p_1(A) = p_2(A)$.
\end{corollary}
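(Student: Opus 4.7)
The plan is to reduce this immediately to the preceding theorem, which already characterizes when $p_1(A) = p_2(A)$ in terms of divisibility by the minimal polynomial. Setting $q(x) = p_1(x) - p_2(x)$, the task becomes showing that the vanishing-of-derivatives hypothesis forces $m_A(x) \mid q(x)$.

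To bridge the derivative condition with divisibility, I will invoke the standard fact from elementary algebra that a polynomial $q(x) \in \mathbb{C}[x]$ is divisible by $(x - \lambda)^k$ if and only if $q(\lambda) = q'(\lambda) = \cdots = q^{(k-1)}(\lambda) = 0$. This is immediate from the Taylor expansion of $q$ around $\lambda$: writing $q(x) = \sum_j \frac{q^{(j)}(\lambda)}{j!}(x-\lambda)^j$, the lowest power of $(x-\lambda)$ appearing with nonzero coefficient is exactly the order of the first nonvanishing derivative. Applying this with $k = \nu(\lambda)$ to each eigenvalue of $A$, the hypothesis yields $(x - \lambda)^{\nu(\lambda)} \mid q(x)$ for every eigenvalue $\lambda$.

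Now the factors $(x - \lambda)^{\nu(\lambda)}$ associated to distinct eigenvalues are pairwise coprime, so by unique factorization in $\mathbb{C}[x]$ their product divides $q(x)$. But this product is precisely the minimal polynomial $m_A(x) = \prod_\lambda (x - \lambda)^{\nu(\lambda)}$ defined in the preamble of Section \ref{lin alg section}. Hence $m_A(x) \mid (p_1(x) - p_2(x))$, and the previous theorem delivers $p_1(A) = p_2(A)$.

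There is no real obstacle here — the corollary is essentially a repackaging of the preceding divisibility theorem through the Taylor-expansion criterion — so the proof should fit in a few lines. The only thing worth being careful about is that the hypothesis ranges over \emph{all} eigenvalues (including their correct indices $\nu(\lambda)$, not the algebraic multiplicities $m(\lambda)$), which is exactly what is needed for the product of the local factors to equal $m_A$ rather than the characteristic polynomial $p_A$.
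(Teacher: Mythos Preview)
Your proposal is correct and matches the paper's intent: the paper states this as a corollary with no explicit proof, leaving the reader to observe exactly what you spelled out---that the derivative hypothesis forces $(x-\lambda)^{\nu(\lambda)}\mid (p_1-p_2)$ at each eigenvalue via the Taylor expansion, and hence $m_A\mid (p_1-p_2)$ by coprimality of the factors, so the preceding theorem applies.
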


\subsection{Spectral Projectors as Polynomials} \label{spec proj poly}

The following is the definition behind \emph{spectral projectors} by Dunford and Schwartz \cite{Dunford_Schwartz}.
We will use it here for its simplicity.
An intuitive description of a spectral projector will be presented soon.

\begin{theorem}[folklore] \label{define spectral projector}
Let $\lambda_i$ be the eigenvalues of a matrix $A$.
There exists matrices $E_{\lambda_i}$ such that \\
(1) $E_{\lambda_i} ^2 = E_{\lambda_i}$ \\
(2) $E_{\lambda_i} E_{\lambda_j} = 0$ when $i \neq j$, \\
(3) $\sum_i E_{\lambda_i} = I$, and \\
(4) $A E_{\lambda_i} = E_{\lambda_i} A$.
\end{theorem}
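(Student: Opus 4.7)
My plan is to construct the projectors explicitly as polynomials in $A$. This makes property (4) automatic and reduces the remaining three properties to polynomial identities, which can then be verified using Theorem VII.1.3 of Dunford--Schwartz above (or equivalently Corollary \ref{zeroes to derrivatives}).

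The first step is to define the polynomials $q_i(x) := m_A(x)/(x - \lambda_i)^{\nu(\lambda_i)}$. The family $\{q_i(x)\}_i$ has no common complex root, so its greatest common divisor is $1$, and by iterating Bezout's identity there exist polynomials $r_i(x)$ with $\sum_i r_i(x) q_i(x) = 1$; equivalently, one may read off the $r_i$ from the partial fraction decomposition of $1/m_A(x)$. Setting $E_{\lambda_i} := r_i(A) q_i(A)$, each $E_{\lambda_i}$ is a polynomial in $A$, so property (4) is immediate, and property (3) follows by evaluating the polynomial identity $\sum_i r_i(x) q_i(x) = 1$ at $x = A$.

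For property (2), the key observation is that when $i \neq j$ the product $q_i(x) q_j(x)$ is divisible by $(x - \lambda_k)^{\nu(\lambda_k)}$ for every $k$, hence $m_A(x) \mid q_i(x) q_j(x)$, so the preceding theorem yields $q_i(A) q_j(A) = 0$. Since polynomials in $A$ commute with each other, $E_{\lambda_i} E_{\lambda_j} = r_i(A) r_j(A) q_i(A) q_j(A) = 0$. Property (1) then falls out as a formal consequence of (2) and (3): $E_{\lambda_i} = E_{\lambda_i} \cdot I = E_{\lambda_i} \sum_j E_{\lambda_j} = E_{\lambda_i}^2 + \sum_{j \neq i} E_{\lambda_i} E_{\lambda_j} = E_{\lambda_i}^2$.

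The only non-cosmetic step is producing the $r_i(x)$, which rests on the standard fact that polynomials with no common complex root have gcd $1$; everything else is a short formal manipulation. A pleasant by-product of this construction is that each $E_{\lambda_i}$ is manifestly a polynomial in $A$ of degree less than $\deg m_A$, which I expect to be useful for the later formulas connecting spectral projectors to the Drazin inverse and the adjugate.
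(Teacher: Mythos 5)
Your proof is correct, and it reaches the same destination as the paper by a recognizably parallel but not identical route: both arguments realize $E_{\lambda_i}$ as a polynomial in $A$ (making (4) free) and both lean on the Dunford--Schwartz theorem that $p(A)$ depends only on $p \bmod m_A$. The difference is in how the polynomial is produced and how the identities are checked. The paper takes $e_i(x)$ to be a Hermite interpolant pinned down by derivative conditions at the eigenvalues and then verifies (1), (2), (3) separately by applying Corollary \ref{zeroes to derrivatives} three times; you instead build $E_{\lambda_i} = r_i(A)q_i(A)$ from a Bezout identity for the coprime cofactors $q_i = m_A/(x-\lambda_i)^{\nu(\lambda_i)}$, get (3) as an exact polynomial identity, get (2) from the divisibility $m_A \mid q_iq_j$, and then derive (1) formally from (2) and (3) rather than checking it independently. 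The two constructions produce the same matrices (your $r_iq_i$ satisfies exactly the Hermite conditions modulo $m_A$, so the paper's later uses of the explicit $e_i$ -- e.g.\ the identity $(A-\lambda I)^{\nu(\lambda)}E_\lambda = 0$ and characterization (g) of Theorem \ref{survey} -- go through unchanged, since $(x-\lambda_i)^{\nu(\lambda_i)}r_iq_i = r_im_A$). What your version buys is that (3) and the degree bound $\deg E_{\lambda_i} < \deg m_A$ are visible on the nose rather than only modulo $m_A$, and that only one of the four properties requires the minimal-polynomial theorem; what the paper's version buys is that the defining derivative conditions on $e_i$ are exactly the interface needed for the repeated applications of Corollary \ref{zeroes to derrivatives} elsewhere in Section \ref{spec proj poly}.
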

\begin{proof}
Let $e_i(x)$ be a polynomial such that $\frac{d^j}{dx^j}e_i(\lambda_k)$ equals $1$ if $j=0$ and $k=i$ and equals zero in all other cases when $0 \leq j < \nu(\lambda_k)$.
Let $E_{\lambda_i} = e_i(A)$.
Because $E_{\lambda_i}$ is a polynomial of $A$, (4) clearly holds.
To see why the rest of the proof is true, apply Corollary \ref{zeroes to derrivatives} for\\
(1) $p_1(x) = e_i(x)^2$ and $p_2(x) = e_i(x)$, \\
(2) $p_1(x) = e_i(x)e_j(x)$ and $p_2(x) = 0$, and \\
(3) $p_1(x) = \sum_i e_i(x)$ and $p_2(x) = 1$.
\end{proof}

The $E_{\lambda_i}$ are sometimes called the \emph{components}, because the behavior of $A$ can be split into a sum of behaviors on the $E_{\lambda_i}$.
From part (3) of Lemma \ref{define spectral projector}, we see that
\begin{equation}\label{this is what A is}
A = A \left( \sum_i E_{\lambda_i} \right) = \sum_i A E_{\lambda_i} = \sum_i \left( \lambda_i E_{\lambda_i} + (A - \lambda_i I) E_{\lambda_i} \right).
\end{equation}
Now using parts (1), (2), and (4) of Lemma \ref{define spectral projector}, we have that 
\begin{eqnarray*}
A^n		& = & 		\left( \sum_i \left( \lambda_i E_{\lambda_i} + (A - \lambda_i I) E_{\lambda_i} \right) \right)^n     	\\
		& = &		\sum_i \left( \lambda_i E_{\lambda_i} + (A - \lambda_i I) E_{\lambda_i} \right)^n			\\
		& = &		\sum_i		\sum_{j=0}^n	{n \choose j} \lambda_i ^{n-j} (A - \lambda_i I)^{j} E_{\lambda_i}	.
\end{eqnarray*}

It is well-known that the space spanned by the columns of $E_0$ is the null space of $A^{\nu(0)}$ (for example, see Theorem VII.1.7 of \cite{Dunford_Schwartz}).
The generalized version of this statement is that $(A - \lambda I)^{\nu(\lambda)}E_\lambda = 0$.
This can be seen to be true from applying Corollary \ref{zeroes to derrivatives} with $p_1(x) = (x - \lambda)^{\nu(\lambda)}e(x)$ and $p_2(x) = 0$.
Thus the majority of our summation above can be ignored.
This gives us the following theorem.

\begin{theorem} \label{limiting behavior}
Let $A$ be a matrix with eigenvalues $\lambda_i$ and let $0^\ell = 1$ for $\ell \leq 0$.
We have that 
$$A^n = \sum_\lambda  \sum_{j=0}^{\nu(\lambda) - 1} {n \choose j} \lambda^{n-j} (A -  \lambda I)^j E_\lambda.$$
\end{theorem}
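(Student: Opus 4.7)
The plan is to carry out explicitly the sketch already suggested in the paragraph preceding the theorem: multiply $A^n$ by the resolution of the identity $I = \sum_\lambda E_\lambda$, binomially expand each summand, and then truncate using the nilpotence statement $(A - \lambda I)^{\nu(\lambda)} E_\lambda = 0$ that the text has just derived.

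First, by part (3) of Theorem \ref{define spectral projector} we have $A^n = A^n \sum_\lambda E_\lambda = \sum_\lambda A^n E_\lambda$. For each $\lambda$, write $A = \lambda I + (A - \lambda I)$; since $\lambda I$ commutes with everything, the ordinary binomial theorem applies and gives
$$A^n E_\lambda = \bigl( \lambda I + (A - \lambda I) \bigr)^n E_\lambda = \sum_{j=0}^{n} \binom{n}{j} \lambda^{n-j} (A - \lambda I)^{j} E_\lambda.$$
(One could equivalently proceed as the excerpt does, starting from $A = \sum_\lambda (\lambda E_\lambda + (A - \lambda I) E_\lambda)$: parts (1), (2), and (4) of Theorem \ref{define spectral projector} ensure that cross-$\lambda$ products vanish, that $\lambda E_\lambda$ commutes with $(A - \lambda I) E_\lambda$, and that powers of either absorb the $E_\lambda$ factor, which is what legitimizes the binomial expansion in the form shown there.) The convention $0^\ell = 1$ for $\ell \leq 0$ is exactly what makes this formula correct in the edge case $\lambda = 0$, $n = j$.

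Second — and this is the whole content of the truncation — the excerpt has already noted, via Corollary \ref{zeroes to derrivatives} applied to $p_1(x) = (x - \lambda)^{\nu(\lambda)} e_\lambda(x)$ and $p_2(x) = 0$, that $(A - \lambda I)^{\nu(\lambda)} E_\lambda = 0$. Hence every term in the above expansion with $j \geq \nu(\lambda)$ is zero, and the sum collapses to $j = 0, \ldots, \nu(\lambda) - 1$. Summing over $\lambda$ yields the claimed identity. There is no real obstacle here: the commutation needed for the binomial expansion is automatic (from $\lambda I$), and the nilpotence needed for the truncation has been proved just above the theorem statement. The only bookkeeping to double-check is that the binomial coefficient $\binom{n}{j}$ vanishes for $j > n$, so the truncated sum is also correct in the regime $n < \nu(\lambda) - 1$, and that the $0^\ell = 1$ convention covers the $\lambda = 0$, $j = n$ entry; both are immediate.
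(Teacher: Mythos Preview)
Your proposal is correct and follows essentially the same approach as the paper: resolve the identity as $\sum_\lambda E_\lambda$, binomially expand, and truncate via $(A-\lambda I)^{\nu(\lambda)}E_\lambda=0$. Your variant of expanding $A^n E_\lambda = (\lambda I + (A-\lambda I))^n E_\lambda$ directly is a mild streamlining of the paper's route (it sidesteps the cross-$\lambda$ cancellations that the paper handles via parts (1), (2), (4)), and your explicit bookkeeping on the $\lambda=0$ and $n<\nu(\lambda)-1$ edge cases is more careful than the paper itself.
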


Because $A$, and therefore $\nu(\lambda_i)$, is fixed we have thus established the limiting behavior of $A^n$.

\begin{corollary} \label{asymp growth for lin alg}
Let $A$ be a matrix with eigenvalues $\lambda_i$.
Let $\rho = \max_i\{\|\lambda_i\|\}$, and let $S = \{i : \|\lambda_i\| = \rho\}$.
Let $\nu = \max \{ \nu(\lambda_i) : i \in S \}$, and let $S \supseteq T = \{i : \|\lambda_i\| = \rho, \nu(\lambda_i) = \nu\}$.
For $\lambda_i \neq 0$, let $\widehat{E_i} = (A \lambda_i ^{-1} -  I)^{\nu - 1} E_{\lambda_i}$.
If $\rho \neq 0$, then we have that 
$$ \lim_{n \rightarrow \infty} \left( \frac{  A^n } { {n \choose \nu - 1} \rho^{n - \nu + 1} }  - \sum_{i \in T} \left(\frac{\lambda_i}{\rho}\right)^{n - \nu + 1} \widehat{E_i} \right)= 0. $$
\end{corollary}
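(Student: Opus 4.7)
The plan is to apply Theorem \ref{limiting behavior} directly and then perform a term-by-term asymptotic analysis after normalizing by $\binom{n}{\nu-1}\rho^{n-\nu+1}$. Dividing the identity
$$A^n = \sum_\lambda \sum_{j=0}^{\nu(\lambda)-1} \binom{n}{j}\lambda^{n-j}(A-\lambda I)^j E_\lambda$$
through by $\binom{n}{\nu-1}\rho^{n-\nu+1}$, every term becomes the product of a fixed matrix $(A-\lambda I)^j E_\lambda$ with the scalar
$$\frac{\binom{n}{j}}{\binom{n}{\nu-1}}\cdot\frac{\lambda^{n-j}}{\rho^{n-\nu+1}}.$$
Because the outer sum is finite, the problem reduces to deciding which of these scalars vanish as $n\to\infty$ and which survive.

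I would then split into three cases on $\lambda$ and $j$. If $|\lambda|<\rho$, the exponential factor $|\lambda/\rho|^{n-j}$ decays geometrically and crushes the at-worst polynomial growth of $\binom{n}{j}/\binom{n}{\nu-1}$, so the contribution is $o(1)$. If $|\lambda|=\rho$ (so $\lambda$ is indexed by $S$) but $j<\nu-1$, then $|\lambda^{n-j}/\rho^{n-\nu+1}|=\rho^{\nu-1-j}$ is a bounded constant while $\binom{n}{j}/\binom{n}{\nu-1} = \Theta(n^{j-\nu+1})\to 0$, again giving $o(1)$. The remaining case is $|\lambda|=\rho$ with $j=\nu-1$, which forces $\nu(\lambda)\geq\nu$ and hence $\lambda\in T$ by the maximality defining $T$; here $\binom{n}{j}/\binom{n}{\nu-1}=1$ and the term is exactly $(\lambda/\rho)^{n-\nu+1}(A-\lambda I)^{\nu-1}E_\lambda$.

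Collecting the surviving terms and rewriting $(A-\lambda_i I)^{\nu-1} = \lambda_i^{\nu-1}(A\lambda_i^{-1}-I)^{\nu-1}$ to express $(A-\lambda_i I)^{\nu-1}E_{\lambda_i}$ in terms of $\widehat{E_i}$ (absorbing the scalar $\lambda_i^{\nu-1}$ into the exponent $(\lambda_i/\rho)^{n-\nu+1}$) matches the claimed formula. The hypothesis $\rho\neq 0$ is exactly what is needed to divide by $\rho^{n-\nu+1}$ and to invert each $\lambda_i\in T$ (all of which have $|\lambda_i|=\rho\neq 0$) in the definition of $\widehat{E_i}$. I do not expect a substantive obstacle: the argument is a bookkeeping exercise on top of Theorem \ref{limiting behavior}, and the only delicate step is the standard comparison of polynomial and exponential asymptotics in the first case above.
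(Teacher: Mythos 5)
Your proposal is correct and is essentially the paper's own argument: the paper derives this corollary in one line from Theorem \ref{limiting behavior}, the content being exactly your bookkeeping that after dividing by ${n \choose \nu-1}\rho^{n-\nu+1}$ only the terms with $\|\lambda\|=\rho$ and $j=\nu-1$ survive (and that such a term exists only when $\nu(\lambda)=\nu$, i.e.\ $\lambda \in T$). One caveat on your final step: the scalar $\lambda_i^{\nu-1}$ cannot be ``absorbed into the exponent'' --- the surviving term is $\left(\frac{\lambda_i}{\rho}\right)^{n-\nu+1}(A-\lambda_i I)^{\nu-1}E_{\lambda_i} = \lambda_i^{\nu-1}\left(\frac{\lambda_i}{\rho}\right)^{n-\nu+1}\widehat{E_i}$ under the statement's definition $\widehat{E_i}=(A\lambda_i^{-1}-I)^{\nu-1}E_{\lambda_i}$, so the displayed limit matches only under the alternative normalization $\widehat{E_i}=(A-\lambda_i I)^{\nu-1}E_{\lambda_i}$; since that is the normalization actually used in the proof of Theorem \ref{general to regular} and in Example 2, this is an inconsistency in the paper's statement rather than a gap in your argument, but you should state which normalization you are proving rather than waving the factor away.
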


Recall that Perron-Frobenius established that if the entries of $A$ are nonnegative real and $i \in S$, then dominant eigenvalue $\lambda_i$ must be $\rho$ times a root of unity.
So under these assumptions, $\sum_{i \in T} \left(\frac{\lambda_i}{\rho}\right)^{n - \nu + 1} \widehat{E_i}$ forms a periodic sequence.
If we divide both sides of Theorem \ref{limiting behavior} by $\rho^{n - \nu + 1}$ but not ${n \choose \nu - 1}$ and sum over $S$ instead of $T$, then we obtain once again the polynomials that Rothblum \cite{Rothblum_expansion_of_sums} uses to describe the growth of $A^n$ as in Section 3.

A separate application of Theorem \ref{limiting behavior} is to consider small values of $n$.
Specifically, we see that 
\begin{equation}\label{spread form of A}
A^1 = \sum_\lambda \lambda E_\lambda + (A - \lambda I) E_\lambda.
\end{equation}
This is a generalization of the spectral decomposition (also called the eigendecomposition).
A matrix is \emph{diagonalizable} if $\nu(\lambda_i) = 1$ for all $i$, and the spectral decomposition of a diagonalizable matrix $A$ is the canonical form $A = \sum_\lambda \lambda E_\lambda$.
We define $A_D = \sum_\lambda \lambda E_\lambda$ and $A_N = \sum_\lambda (A - \lambda I) E_\lambda$.
Because $(A - \lambda I)^{\nu(\lambda)}E_\lambda = 0$ and by Lemma \ref{define spectral projector}(1,2,4) , it follows that $A_N$ is nilpotent.
We have thus constructed $A = A_D + A_N$, such that $A_D$ is diagonalizable and $A_N$ is nilpotent.
Moreover, if $A$ is diagonalizable, then $A_N = 0$ and we observe the spectral decomposition as a special case.
Our partition of $A$ into two parts is consistent with the diagonalizable and nilpotent parts derived from the Schur decomposition of a matrix and the semi-simple and nilpotent components of the Jordon-Chevalley decomposition.

\subsection{Spectral Projectors as Eigenvectors} \label{spec proj as eig}
There is much unnecessary mystery around the $E_\lambda$.
Agaev and Chebotarev \cite{Agaev_Chebotarev} gave the following survey of results around $E_\lambda$, which testifies to how thoroughly it has been studied.

\begin{theorem} \label{survey}
Let $E_\lambda$ be the spectral projectors, as calculated in the proof to Theorem \ref{define spectral projector}.
Then $E_0 = Z$ if and only if any of the following hold: \\
(a) (Wei \cite{Wei}, Zhang \cite{Zhang}) $Z^2 = Z$, $A^{\nu(0)}Z = Z A^{\nu(0)} = 0$, and $rank(A^{\nu(0)}) + rank(Z) = n$,\\
(b) (Koliha and Stra\v{s}kraba \cite{Koliha_Straskraba}, Rothblum \cite{Rothblum_resolvent}) $Z^2 = Z$, $AZ = ZA$, and $A+ \alpha Z$ is nonsingular for all $\alpha \neq 0$,\\
(c) (Koliha and Stra\v{s}kraba \cite{Koliha_Straskraba}) $Z^2 = Z$, $AZ = ZA$, $A+ \alpha Z$ is nonsingular for an $\alpha \neq 0$, and $AZ$ is nilpotent,\\ 
(d) (Harte \cite{Harte}) $Z^2 = Z$, $AZ = ZA$, $AZ$ is nilpotent, and there exists matrices $U,V$ such that $AU = I - Z = VA$, \\
(e) (Hartwig \cite{Hartwig}, Rothblum \cite{Rothblum_drazin}) $Z = I - AA^D$, \\
(f) (Hartwig \cite{Hartwig}, Rothblum \cite{Rothblum_comp}) $Z = X(Y^*X)^{-1}Y^*$, where $X$ and $Y$ are the matrices whose columns make up a basis for the generalized $0$-eigenspace of $A$ and $A^*$, respectively, where $T^*$ is the conjugate transpose of $T$, \\
(g) (Agaev and Chebotarev \cite{Agaev_Chebotarev}) $Z = \prod_{\lambda \neq 0}\left(1 - (A/\lambda)^{\nu(0)}\right)^{\nu(\lambda)}$, and\\
(h) (folklore) $Z$ is the projection on $\ker(A^{\nu(0)})$ along $\mathcal{R}(A^{\nu(0)})$.
\end{theorem}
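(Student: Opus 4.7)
The plan is to leverage Theorem \ref{outerproduct} to establish characterization (h) directly, and then show that every other characterization is equivalent to (h). Because a projection is uniquely determined by its image and kernel, reducing each of (a)--(g) to (h) gives the eight-way equivalence in one stroke.

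First, I would prove (h). Applying Theorem \ref{outerproduct} with $\lambda = 0$ gives $E_0 = V_R(V_LV_R)^{-1}V_L$, where the columns of $V_R$ are a basis for the right generalized $0$-eigenspace (equal to $\ker(A^{\nu(0)})$) and the rows of $V_L$ are a basis for the left generalized $0$-eigenspace. Consequently $\mathcal{R}(E_0)$ equals the column space of $V_R$, which is $\ker(A^{\nu(0)})$. A vector $x$ lies in $\ker(E_0)$ if and only if $V_L x = 0$; this kernel is the annihilator of the left null space of $A^{\nu(0)}$, which equals $\mathcal{R}(A^{\nu(0)})$. Hence $E_0$ is the projection on $\ker(A^{\nu(0)})$ along $\mathcal{R}(A^{\nu(0)})$, and any matrix $Z$ satisfying (h) must equal $E_0$.

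The forward direction of each equivalence---that $E_0$ satisfies the listed condition---is uniformly routine using the structural facts implied by (h): $E_0$ is idempotent, its image $\ker(A^{\nu(0)})$ and kernel $\mathcal{R}(A^{\nu(0)})$ are $A$-invariant, $A$ is nilpotent of index $\nu(0)$ on $\mathcal{R}(E_0)$, and $A$ is invertible on $\ker(E_0)$. In detail: (a) is rank-nullity plus the image/kernel containments; (b) and (c) follow because $A + \alpha Z$ restricts to $A|_{\mathcal{R}(E_0)} + \alpha I$ (invertible for any $\alpha \neq 0$ since $A|_{\mathcal{R}(E_0)}$ is nilpotent) and to $A|_{\ker(E_0)}$ (already invertible); (d) holds by taking $U = V = A^D$, the Drazin inverse; (e) is Theorem \ref{drazin theorem}; (f) is precisely Theorem \ref{outerproduct} at $\lambda = 0$; and (g) follows from Corollary \ref{zeroes to derrivatives} by matching the derivatives of the stated polynomial with those of the Lagrange-type polynomial $e_0$ defining $E_0$ at every eigenvalue.

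The reverse direction, in each case, pins down $Z$ as the projection described in (h). For (a), $A^{\nu(0)}Z = 0$ and $ZA^{\nu(0)} = 0$ yield $\mathcal{R}(Z) \subseteq \ker(A^{\nu(0)})$ and $\mathcal{R}(A^{\nu(0)}) \subseteq \ker(Z)$, and the rank condition forces both inclusions to equalities. For (b) and (c), $Z^2 = Z$ with $AZ = ZA$ makes $Z$ a projection with $A$-invariant image and kernel; nonsingularity of $A + \alpha Z$ (for all $\alpha \neq 0$ in (b), for a fixed $\alpha$ together with $AZ$ nilpotent in (c)) forces $A|_{\mathcal{R}(Z)}$ nilpotent and $A|_{\ker(Z)}$ invertible, so $\mathcal{R}(Z) \subseteq \ker(A^{\nu(0)})$ and $\ker(Z) \subseteq \mathcal{R}(A^{\nu(0)})$, with dimensions giving equality. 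For (e), (f), and (g) we directly invoke Theorem \ref{drazin theorem}, Theorem \ref{outerproduct}, and polynomial evaluation on a basis of generalized eigenvectors via Corollary \ref{zeroes to derrivatives}, respectively. I expect the main obstacle to be (d): given only the existence of some $U$ and $V$ with $AU = VA = I - Z$, together with commutation and $AZ$ nilpotent, one must carefully extract $\mathcal{R}(Z) \subseteq \ker(A^{\nu(0)})$ from nilpotence and $\ker(Z) \supseteq \mathcal{R}(A^{\nu(0)})$ by iterating the one-sided inverse relations, since these hypotheses give weaker information than an explicit formula for the inverse.
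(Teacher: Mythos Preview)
Your proposal is correct and uses the same core ingredient as the paper --- Theorem \ref{outerproduct} --- but the organization differs. The paper does not route everything through (h); instead it handles (a)--(d) with a single common argument: from $Z^2=Z$, $AZ=ZA$, and $AZ$ nilpotent it concludes that the columns (rows) of $Z$ lie in the right (left) generalized $0$-eigenspace, and then appeals to the proof of Theorem \ref{outerproduct} together with any condition forcing $\operatorname{rank}(Z)\geq m(0)$. For the implication ``(b) $\Rightarrow$ $AZ$ nilpotent'' the paper simply cites Koliha--Stra\v{s}kraba, whereas you supply the invariant-subspace argument directly (restricting $A+\alpha Z$ to $\mathcal{R}(Z)$ and $\ker(Z)$), which makes your treatment of (b) more self-contained. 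Items (e), (f), (g) are handled identically in both; for (h) the paper invokes Fredholm's Theorem while you derive it from Theorem \ref{outerproduct} with a dimension count, which amounts to the same computation. Your singling out of (d) as the delicate case is apt: the needed inductive step --- using $VA=I-Z$ and $AZ=ZA$ to show $\ker(A^k)\subseteq\mathcal{R}(Z)$ for all $k$ --- is precisely what supplies the rank lower bound the paper's argument requires.
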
 

We now turn to a major intuitive point of this manuscript: that the spectral projector $E_\lambda$ can be characterized as an outerproduct of generalized $\lambda$-eigenvectors.

\begin{theorem} \label{outerproduct}
Let $A$ be a matrix with eigenvalue $\lambda$ with algebraic multiplicity $m:=m(\lambda)$.
Let $v_{R,1}, \ldots, v_{R,m}$ and $v_{L,1}, \ldots, v_{L,m}$ denote arbitrary sets of left and right generalized $\lambda$-eigenvectors such that each set is linearly independent.
Let $V_R$ denote $n \times m$ matrix where column $i$ is $v_{R,i}$, and let $V_L$ denote a $m \times n$ matrix where row $i$ is $v_{L,i}$.
We have then that $V_LV_R$ is invertible and $E_\lambda = V_R (V_L V_R)^{-1} V_L$.
\end{theorem}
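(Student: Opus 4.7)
The plan is to extract from Theorem \ref{define spectral projector} a fundamental action law for $E_\lambda$ on generalized eigenvectors, then deduce both the invertibility of $V_L V_R$ and the outerproduct formula as consequences. The key fact to establish first is that $E_\lambda$ acts as the identity on generalized right $\lambda$-eigenvectors and annihilates generalized right $\mu$-eigenvectors for $\mu \neq \lambda$, with the analogous statements holding for left generalized eigenvectors.

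For this fundamental fact, I would reuse the polynomial definition $E_\mu = e_\mu(A)$ given in the proof of Theorem \ref{define spectral projector}. Since $e_\mu(x)$ vanishes at every eigenvalue $\lambda \neq \mu$ to order $\nu(\lambda)$, we have $(x - \lambda)^{\nu(\lambda)} \mid e_\mu(x)$, and hence $E_\mu v = 0$ for any generalized $\lambda$-eigenvector $v$ whenever $\mu \neq \lambda$. Combined with $\sum_\mu E_\mu = I$, this gives $E_\lambda v = v$. The same polynomial argument, together with $A E_\lambda = E_\lambda A$, handles the left version: $u E_\lambda = u$ for $u$ a left generalized $\lambda$-eigenvector, and $u E_\mu = 0$ when $\mu \neq \lambda$.

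Next I would establish a cross-orthogonality lemma: if $u$ is a left generalized $\lambda$-eigenvector and $v$ is a right generalized $\mu$-eigenvector with $\mu \neq \lambda$, then $uv = (u E_\lambda) v = u (E_\lambda v) = 0$. This is the engine for invertibility of $V_L V_R$. Suppose $V_L V_R c = 0$ and set $v := V_R c$, which is a right generalized $\lambda$-eigenvector with $v_{L,i}\, v = 0$ for every $i$; since the $v_{L,i}$ span the $m$-dimensional left generalized $\lambda$-eigenspace, this gives $u v = 0$ for every such $u$. For an arbitrary row vector $w$, write $w = \sum_\mu w E_\mu$. Each $w E_\mu$ is a left generalized $\mu$-eigenvector, because $w E_\mu (A - \mu I)^{\nu(\mu)} = w (A - \mu I)^{\nu(\mu)} E_\mu = 0$ using that $A$ commutes with $E_\mu$. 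Cross-orthogonality kills the $\mu \neq \lambda$ terms, and the $\mu = \lambda$ term vanishes by what we just noted. Hence $w v = 0$ for every $w$, forcing $v = 0$ and therefore $c = 0$ by linear independence of the columns of $V_R$.

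Finally, to verify the outerproduct formula, I would compare $P := V_R (V_L V_R)^{-1} V_L$ with $E_\lambda$ on a basis of generalized eigenvectors of $A$. If $v$ is a generalized $\lambda$-eigenvector, then $v = V_R c$ for some $c$ (the columns of $V_R$ form a basis of the $m$-dimensional generalized $\lambda$-eigenspace) and $P v = V_R (V_L V_R)^{-1} V_L V_R c = V_R c = v = E_\lambda v$. If $v$ is a generalized $\mu$-eigenvector for $\mu \neq \lambda$, then $V_L v = 0$ by cross-orthogonality, so $P v = 0 = E_\lambda v$. Since generalized eigenvectors span $\mathbb{C}^n$, equality of matrices follows. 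I expect the main obstacle to be the invertibility step: establishing trivial kernel for $V_L V_R$ requires the non-degeneracy of the pairing between the left and right generalized $\lambda$-eigenspaces, which ultimately rests on the decomposition of $\mathbb{C}^n$ into generalized eigenspaces of $A$ and of $A^T$; once that is in hand, the rest is essentially a bookkeeping check against a basis.
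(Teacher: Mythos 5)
Your proof is correct, but it takes a genuinely different route from the paper's. The paper argues structurally about the matrix $E_\lambda$ itself: since $(A-\lambda I)^{\nu(\lambda)}E_\lambda=0$ and $E_\lambda^T=e(A^T)$, its columns and rows are right and left generalized $\lambda$-eigenvectors, so $E_\lambda=V_RU_R=U_LV_L$; idempotence then gives $E_\lambda=V_R(U_RU_L)V_L$, and a rank count (each $E_\lambda$ has rank at most $m(\lambda)$, while $\sum_\lambda E_\lambda=I$ forces equality) shows $U_RU_L$ and $V_LV_R$ are invertible with $U_RU_L=(V_LV_R)^{-1}$. You instead characterize $E_\lambda$ by its action on the direct-sum decomposition of $\mathbb{C}^n$ into generalized eigenspaces --- identity on the $\lambda$-space, zero on the others --- and verify that $V_R(V_LV_R)^{-1}V_L$ has the same action, with invertibility of $V_LV_R$ coming from non-degeneracy of the pairing between the left and right generalized $\lambda$-eigenspaces. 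Both arguments are sound. The paper's is shorter and stays entirely at the level of matrix identities; yours is the more standard ``oblique projector'' viewpoint, makes the invertibility of $V_LV_R$ conceptually transparent, and derives the cross-orthogonality $v_L'v_R=0$ for distinct eigenvalues as a two-line byproduct of the projector calculus --- a fact the paper proves separately, by induction on the index, as Proposition \ref{orthogonal}. The only points worth tightening in your write-up are (i) an explicit remark that $(x-\lambda)^{\nu(\lambda)}\mid e_\mu(x)$ follows from the vanishing of the first $\nu(\lambda)$ derivatives of $e_\mu$ at $\lambda$, and (ii) the observation that $wE_\mu$ may be the zero vector, in which case its contribution vanishes trivially; neither affects correctness.
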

\begin{proof}
Let $e(x)$ be such that $E_\lambda = e(A)$, as in the proof to Theorem \ref{define spectral projector}.
Recall that $(A - \lambda I)^{\nu(\lambda)}E_\lambda = 0$.
This implies that the columns of $E_\lambda$ are generalized $\lambda$-eigenvectors of $A$; so $E_\lambda = V_R U_R$ for some $m \times n$ matrix $U_R$.
Because $E_\lambda^T = e(A)^T = e(A^T)$, we can apply a symmetric argument to say that $E_\lambda = U_L V_L$ for some $n \times m$ matrix $U_L$ (using the fact that $E_\lambda$ and $A$ commute).
So then we have that $E_\lambda = E_\lambda^2 = (V_R U_R)(U_L V_L) = V_R (U_R U_L) V_L$.
Repeating the same argument, we see that
$$ V_R (U_R U_L) V_L = E_\lambda = E_\lambda^2 = V_R (U_R U_L) V_L V_R (U_R U_L) V_L. $$

$U_R U_L$ and $V_L V_R$ are square matrices of size $m(\lambda)$, so each $E_\lambda$ has rank at most $m(\lambda)$.
Because $I = \sum_\lambda E_\lambda$, it must be that each $E_\lambda$ has rank exactly $m(\lambda)$.
Therefore $U_R U_L$ and $V_L V_R$ have full rank and are invertible.
We conclude that $U_R U_L = (V_L V_R)^{-1}$.
\end{proof}

Each space defined by generalized $\lambda$-eigenvectors is a linear subspace.  
Hence, if we consider $V_R' = V_R (V_L V_R)^{-1}$, then $V_R'$ is also a $n \times m$ matrix where column $i$ is $v_{R,i}'$, and the $v_{R,i}'$ form a basis of the generalized $\lambda$-eigenvectors.
We have $E_\lambda = V_R' V_L = \sum_i v_{R,i}' v_{L,i}$, and so $E_\lambda$ really is the outerproduct of $m(\lambda)$ (carefully chosen) generalized $\lambda$-eigenvectors.

With this deeper understanding of the $E_\lambda$, certain arguments become simpler.
First, we remind the reader about the Drazin inverse and Fredholm's Theorem.

The Drazin inverse of $A$, denoted $A^D$, is defined as the unique matrix such that $AA^D = A^DA$, $A^DAA^D = A^D$, and $A^{\nu(0) + 1}A^D = A^{\nu(0)}$.
There exist many characterizations of the Drazin inverse \cite{Stanimirovic_Djordjevic,Zhang,Ben-Israel_Greville,Rothblum_drazin,Wei,Wei_Wu,Wei_Qiao,Chen}.
We provide a spectral characterization of the Drazin inverse that highlights the relationship between the Drazin inverse, the inverse, and the spectral projectors.
This interpretation also appears as exercise 7.9.22 of \cite{Meyer}.

\begin{theorem} \label{drazin theorem}
If $A$ is invertible, then
$$ A^{-1} = \sum_{\lambda} \lambda ^{-1} \sum_{i=0}^{\nu(\lambda)-1} (I - A \lambda^{-1})^{i}E_\lambda . $$
The Drazin inverse of $A$ is 
$$ A^D = \sum_{\lambda \neq 0} \lambda ^{-1} \sum_{i=0}^{\nu(\lambda)-1} (I - A \lambda^{-1})^{i}E_\lambda . $$
\end{theorem}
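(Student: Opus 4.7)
The plan is to verify both formulas directly by leveraging the algebraic structure of the spectral projectors from Theorem \ref{define spectral projector}, together with the nilpotency relation $(A - \lambda I)^{\nu(\lambda)} E_\lambda = 0$ established just before Theorem \ref{limiting behavior}. The key observation that drives everything is that if we set $N_\lambda := A\lambda^{-1} - I$ (whenever $\lambda \neq 0$), then $A\lambda^{-1} = I + N_\lambda$ and $N_\lambda^{\nu(\lambda)} E_\lambda = 0$, so multiplying by $A\lambda^{-1}$ on the ring generated by $N_\lambda$ and $E_\lambda$ behaves like multiplication by $1 + N_\lambda$ modulo a nilpotent.

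For the first formula, let $B$ denote the candidate right-hand side and compute $AB$. Because $A$ and $E_\lambda$ commute and $E_\lambda E_{\lambda'} = 0$ for $\lambda \neq \lambda'$, the cross-terms vanish and $AB$ decomposes as a sum over eigenvalues of expressions of the form
\[
A\lambda^{-1}\sum_{i=0}^{\nu(\lambda)-1}(I - A\lambda^{-1})^{i} E_\lambda = (I + N_\lambda)\sum_{i=0}^{\nu(\lambda)-1}(-N_\lambda)^{i} E_\lambda.
\]
This is a telescoping sum that collapses to $E_\lambda - (-N_\lambda)^{\nu(\lambda)} E_\lambda$; the second term is zero by nilpotency. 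Summing over $\lambda$ and invoking $\sum_\lambda E_\lambda = I$ yields $AB = I$, hence $B = A^{-1}$.

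For the Drazin inverse, let $B^D$ denote the candidate. The same telescoping argument, now with the $\lambda = 0$ summand absent, gives $AB^D = B^D A = \sum_{\lambda \neq 0} E_\lambda = I - E_0$. It remains to verify the three defining properties of $A^D$. Commutativity $AA^D = A^D A$ is immediate. For $A^D A A^D = A^D$, one writes $A^D A A^D = (I - E_0) B^D$ and uses the fact that every term in $B^D$ is a product of an eigenspace-preserving polynomial in $A$ with some $E_\lambda$ for $\lambda \neq 0$; since $E_0 E_\lambda = 0$ and $E_0$ commutes with $A$, we get $E_0 B^D = 0$ and hence $A^D A A^D = B^D = A^D$. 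Finally, $A^{\nu(0)+1} A^D = A^{\nu(0)}(I - E_0) = A^{\nu(0)} - A^{\nu(0)} E_0$, and the last term vanishes because $A^{\nu(0)} E_0 = (A - 0\cdot I)^{\nu(0)} E_0 = 0$, completing the verification.

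The main obstacle is purely bookkeeping: confirming that the telescoping works cleanly in the $\lambda \neq 0$ case and that the commutativity of each $E_\lambda$ with $A$ lets us freely rearrange $E_0$ past arbitrary polynomials in $A$. Nothing deeper than the properties already listed in Theorem \ref{define spectral projector} and the nilpotency $(A - \lambda I)^{\nu(\lambda)} E_\lambda = 0$ is required, so the entire argument should fit comfortably in a few lines once the telescoping identity is written down.
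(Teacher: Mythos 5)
Your proposal is correct and follows essentially the same route as the paper: both invert $I + N_\lambda$ on the range of $E_\lambda$ via the finite geometric (telescoping) series using the nilpotency $(A-\lambda I)^{\nu(\lambda)}E_\lambda = 0$, then sum over eigenvalues with $\sum_\lambda E_\lambda = I$ to get $AB = I$ (resp.\ $AA^D = A^DA = I - E_0$). The only difference is that you explicitly verify the remaining Drazin axioms, which the paper dismisses as ``quickly follow''; your verification is correct.
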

\begin{proof}
Recall that $1 + (-x)^k = (1 + x)(1 - x + x^2 \cdots + (-x)^{k-1})$.
If $x$ is nilpotent and $x^k = 0$, then $1 + x$ and $\sum_{i=0}^{k-1}(-x)^i$ are inverses.
Because $(A - \lambda I)^{\nu(\lambda)}E_\lambda = 0$, let $x = A \lambda^{-1} - I$ when $\lambda \neq 0$ to see that
$$ \lambda(\lambda E_\lambda + (A - \lambda I) E_\lambda)^{-1} = \sum_{i=0}^{\nu(\lambda)-1} (-A \lambda^{-1} + I)^i E_\lambda .$$
Using (\ref{spread form of A}) and Theorem \ref{define spectral projector}, our stated expression for $A^D$ satisfies $AA^D = A^DA = \sum_{\lambda \neq 0} E_\lambda = I - E_0$.
The rest of the equations in the definition of Drazin inverse quickly follow.
\end{proof}

Fredholm's Theorem (see equation 5.11.5 of \cite{Meyer}) states that the orthogonal compliment of the range of $A$ is the null space of the conjugate transpose of $A$, and that the orthogonal compliment of the null space of $A$ is the range of the conjugate transpose of $A$.
The statement below quickly follows from applying Fredholm's theorem to $(A - \lambda I)^{\nu(\lambda)}$; we give an independent proof.

\begin{proposition} [Fredholm's Theorem (special case)] \label{orthogonal}
Let $A$ be a matrix with 
a generalized right $\lambda$-eigenvector $v_R$ and a generalized left $\lambda'$-eigenvector $v_L'$ .
If $\lambda \neq \lambda'$, then $v_L' v_R = 0$.
\end{proposition}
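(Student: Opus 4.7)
The plan is to prove this by induction on $\nu_R + \nu_L$, where $\nu_R$ is the index of $v_R$ as a generalized right $\lambda$-eigenvector and $\nu_L$ is the index of $v_L'$ as a generalized left $\lambda'$-eigenvector. The key observation is that $v_L' A v_R$ can be computed in two ways by decomposing $A$ as either $(A - \lambda I) + \lambda I$ (acting on $v_R$) or $(A - \lambda' I) + \lambda' I$ (acting on $v_L'$).

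For the base case $\nu_R = \nu_L = 1$, the vectors are genuine eigenvectors: $A v_R = \lambda v_R$ and $v_L' A = \lambda' v_L'$. Then $v_L' A v_R$ equals both $\lambda v_L' v_R$ and $\lambda' v_L' v_R$, so $(\lambda - \lambda') v_L' v_R = 0$, and since $\lambda \neq \lambda'$ we get $v_L' v_R = 0$.

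For the inductive step, set $u_R = (A - \lambda I) v_R$ and $u_L' = v_L' (A - \lambda' I)$. If $\nu_R \geq 2$ then $u_R$ is a generalized right $\lambda$-eigenvector of index $\nu_R - 1$, and by induction $v_L' u_R = 0$; if $\nu_R = 1$ then $u_R = 0$ and the same conclusion holds trivially. The symmetric argument shows $u_L' v_R = 0$ in all cases. Now
$$ v_L' A v_R = v_L' u_R + \lambda v_L' v_R = \lambda v_L' v_R, $$
and also
$$ v_L' A v_R = u_L' v_R + \lambda' v_L' v_R = \lambda' v_L' v_R. $$
Equating and using $\lambda \neq \lambda'$ gives $v_L' v_R = 0$.

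The argument is essentially a two-line computation wrapped in an induction, so I do not anticipate a real obstacle; the only thing to watch is the bookkeeping that $(A - \lambda I) v_R$ really does have strictly smaller index (which follows from the definition of index), and the clean handling of the degenerate case where one of $\nu_R, \nu_L$ is already $1$ so that the induction hypothesis is not actually invoked on that side.
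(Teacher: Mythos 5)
Your proof is correct. It rests on the same underlying idea as the paper's proof --- induction on generalized eigenvector index, exploiting that $(A-\lambda I)$ lowers the index --- but the execution differs in a way worth noting. The paper fixes the left vector $v_L'$ and inducts only on the index $k$ of $v_R$: it evaluates the single quantity $v_L'(A-\lambda' I)^{k'}v_R$, which vanishes by definition of $k'$, and shows via a binomial expansion of $(A - \lambda I + (\lambda-\lambda')I)^{k'}$ that it also equals $(\lambda-\lambda')^{k'}\,v_L' v_R$, with the cross terms killed by the inductive hypothesis. You instead induct on $\nu_R + \nu_L$ and evaluate $v_L' A v_R$ twice, once peeling $(A-\lambda I)$ off the right and once peeling $(A-\lambda' I)$ off the left; this is symmetric in the two vectors, needs only a single power of $A$ rather than the $k'$-th power, and avoids the binomial expansion entirely. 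The trade-off is that your inductive hypothesis must cover pairs where either index has dropped, whereas the paper's only ever shrinks the right index. Both arguments are complete; your bookkeeping of the degenerate cases ($u_R=0$ or $u_L'=0$ when the corresponding index is $1$) is handled correctly, and the step down in index is exactly the fact the paper records in its background section, so nothing is missing.
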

\begin{proof}
Let $v_R$ have index $k$ and let $v_L'$ have index $k'$.
Consider the term $v_L' (A - \lambda' I)^{k'} v_R$.
By definition of $k'$ this term equals $0$.
We claim that this term equals $c v_L' v_R$ for some $c \neq 0$, which will prove the proposition.
We proceed by induction on $k$.
If $k = 1$, then $v_R$ is en eigenvector and $(A - \lambda' I)^{k'} v_R = (\lambda - \lambda')^{k'} v_R$.
Therefore the claim follows with $c = (\lambda - \lambda')^{k'} \neq 0$ when $\lambda \neq \lambda'$.

Now we proceed with induction; assume that $v_L' v' = 0$ for all generalized right $\lambda$-eigenvectors $v'$ with index $j$ when $j < k$.
We have that 
\begin{eqnarray*}
v_L' (A - \lambda' I)^{k'} v_R 		& = & 	v_L' (A - \lambda I + (\lambda - \lambda')I)^{k'} v_R \\
					& = &	v_L' \sum_{i = 0}^{k'} (\lambda - \lambda')^{k' - i} (A - \lambda I)^{i} v_R \\
					& = &  \sum_{i = 0}^{k'} (\lambda - \lambda')^{k' - i} v_L'(A - \lambda I)^{i} v_R  .
\end{eqnarray*}
Recall that $(A - \lambda I)^{i} v_R$ is a generalized right $\lambda$-eigenvector with index $k - i$ (in this case, a nonpositive index refers to the vector $0$).
So by induction, $v_L'(A - \lambda I)^{i} v_R = 0$ when $i > 0$.
Thus we have that $v_L' (A - \lambda' I)^{k'} v_R = (\lambda - \lambda')^{k'} v_L' v_R$, and the proposition follows.
\end{proof}

Next, we return to the characterizations of $E_0$.

\textit{Proof of Theorem \ref{survey}.} Clearly the $E_0$ as we have defined satisfy conditions (a), (b), (c), and (d).
If $AZ = ZA$, $Z^2 = Z$, and $AZ$ is nilpotent, then $A^nZ = 0$, so the columns of $Z$ must be generalized $0$-eigenvectors of $A$.
By considering $((AZ)^T)^n = ((AZ)^n)^T$, a symmetrical statement can be said about the rows of $Z$ and the generalized left $0$-eigenvectors.
Following the argument of Theorem \ref{outerproduct}, we see that $Z$ must be $E_0$ if $Z^2 = Z$, $AZ = ZA$, $AZ$ is nilpotent, and there is some condition that implies $rank(Z) \geq m(0)$.
Koliha and Stra\v{s}kraba \cite{Koliha_Straskraba} gave a short proof (maybe 6 lines after all the references are combined) that the conditions of (b) imply that $AZ$ is nilpotent.
Therefore the equivalence of (a), (b), (c), and (d) follow.

Theorem \ref{drazin theorem} and Theorem \ref{define spectral projector}(3) imply (e).
The equivalence of (f) is trivial.
In Theorem \ref{define spectral projector} we may assume that $e_i(x) = \prod_{j \neq i}\left(1 - \left(\frac{x - \lambda_i}{\lambda_j - \lambda_i}\right)^{\nu(\lambda_i)}\right)^{\nu(\lambda_j)}$, and so (g) is equivalent.
Part (h) follows from Fredholm's Theorem.
\hfill $\square$

Our final result of this subsection is the one that surprised us the most.
It is natural that if eigenvectors are the correct answer in the special case of diagonalizable matrices, then generalized eigenvectors may be the correct solution for general matrices.
However, while we have needed generalized eigenvectors in our arguments, our next result is that eigenvectors are sufficient for the general matrix!

\begin{theorem}\label{general to regular}
Using the notation of Corollary \ref{asymp growth for lin alg}, if $i \in T$, then $\widehat{E_{\lambda_i}}$ is an outerproduct of eigenvectors.
\end{theorem}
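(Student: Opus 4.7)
The definition $\widehat{E_{\lambda_i}} = (A\lambda_i^{-1} - I)^{\nu-1} E_{\lambda_i}$ rearranges to $\widehat{E_{\lambda_i}} = \lambda_i^{-(\nu-1)} (A - \lambda_i I)^{\nu-1} E_{\lambda_i}$, and because $E_{\lambda_i}$ is a polynomial in $A$ it commutes with $(A - \lambda_i I)^{\nu-1}$, so the same product may be factored in the other order. My plan rests on a single observation: if $v$ is a generalized right $\lambda_i$-eigenvector of index $k \leq \nu(\lambda_i) = \nu$, then $(A - \lambda_i I)^{\nu-1} v$ is $0$ when $k < \nu$ and is a genuine right $\lambda_i$-eigenvector when $k = \nu$ (because applying one more factor of $A - \lambda_i I$ would give $(A - \lambda_i I)^{\nu} v = 0$). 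The symmetric statement holds for generalized left $\lambda_i$-eigenvectors multiplied on the right by $(A - \lambda_i I)^{\nu-1}$.

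From the proof of Theorem \ref{outerproduct}, the columns of $E_{\lambda_i}$ are generalized right $\lambda_i$-eigenvectors; applying the same argument to $E_{\lambda_i}^T = e(A^T)$ shows its rows are generalized left $\lambda_i$-eigenvectors. Combining this fact with the two factorizations above, every column of $\widehat{E_{\lambda_i}}$ is either zero or a right $\lambda_i$-eigenvector, and every row is either zero or a left $\lambda_i$-eigenvector.

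To assemble the outerproduct, let $v_{1,R}, \ldots, v_{t,R}$ be a basis for the right $\lambda_i$-eigenspace, and let $V_R$ be the $n \times t$ matrix whose columns are these vectors. Because the column space of $\widehat{E_{\lambda_i}}$ lies in the span of the $v_{j,R}$, there exists a $t \times n$ matrix $C$ with $\widehat{E_{\lambda_i}} = V_R C$. Defining $v_{j,L}$ to be row $j$ of $C$ yields $\widehat{E_{\lambda_i}} = \sum_{j=1}^{t} v_{j,R} v_{j,L}$, which has the claimed form provided each $v_{j,L}$ is a left $\lambda_i$-eigenvector.

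For this last step, since $V_R$ has full column rank $t$, the linear map $c \mapsto V_R c$ from $\mathbb{C}^t$ to $\mathbb{C}^n$ is injective, so $\mathrm{rank}(C) = \mathrm{rank}(V_R C) = \mathrm{rank}(\widehat{E_{\lambda_i}})$. Every row of $\widehat{E_{\lambda_i}} = V_R C$ is a linear combination of rows of $C$, so the row space of $\widehat{E_{\lambda_i}}$ is contained in the row space of $C$; equality of row ranks then forces the two row spaces to coincide. Each $v_{j,L}$ therefore lies in the row space of $\widehat{E_{\lambda_i}}$, which by the second paragraph consists of left $\lambda_i$-eigenvectors. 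The only substantive idea is the initial index-reduction observation that $(A - \lambda_i I)^{\nu-1}$ collapses the generalized $\lambda_i$-eigenspace onto the eigenspace proper, so I do not anticipate a serious obstacle; the remainder is a rank-counting bookkeeping exercise.
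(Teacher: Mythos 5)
Your proof is correct and follows essentially the same route as the paper: the key step in both is that $(A-\lambda_i I)^{\nu-1}$, applied to either side of the factorization of $E_{\lambda_i}$ into generalized eigenvectors, collapses index-$\le\nu$ generalized eigenvectors to genuine eigenvectors or zero, so the columns and rows of $\widehat{E_{\lambda_i}}$ lie in the right and left $\lambda_i$-eigenspaces respectively. Your concluding rank-counting argument is just an inlined version of the paper's Claim \ref{everything is an outerproduct}, which it invokes to finish.
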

\begin{proof}
Recall that $\widehat{E_{\lambda_i}} = (A - \lambda_i I)^{\nu(\lambda_i)-1}E_{\lambda_i}$.
By Lemma \ref{define spectral projector}(4), we also have that $\widehat{E_{\lambda_i}} = E_{\lambda_i} (A - \lambda_i I)^{\nu(\lambda_i)-1}$.
By Theorem \ref{outerproduct} and the discussion afterwards, $E_{\lambda_i}$ is an outerproduct of generalized $\lambda_i$-eigenvectors.
That is, $E_{\lambda_i} = \sum_{j=1}^{m(\lambda_i)} v_{R,j} v_{L,j} $, where $v_{R,1}, \ldots, v_{R,m(\lambda_i)}$ are right generalized $\lambda_i$-eigenvectors and $v_{L,1}, \ldots, v_{L,m(\lambda_i)}$ are left generalized $\lambda_i$-eigenvectors.
Therefore 
$$\widehat{E_{\lambda_i}} =  \sum_{j=1}^{m(\lambda_i)} \left((A - \lambda_i I)^{\nu(\lambda_i)-1} v_{R,j}\right) v_{L,j} =  \sum_{j=1}^{m(\lambda_i)} v_{R,j} \left(v_{L,j} (A - \lambda_i I)^{\nu(\lambda_i)-1}\right).$$

Let $w_{R,j} = (A - \lambda_i I)^{\nu(\lambda_i)-1} v_{R,j}$ and $w_{L,j} = v_{L,j} (A - \lambda_i I)^{\nu(\lambda_i)-1}$.
If $v_{R,j}$ has index $k$, then $w_{R,j}$ has index $k - \nu(\lambda_i) + 1$ (where a nonpositive index indicates the zero vector).
By the definition of $\nu$, we have that $k \leq \nu(\lambda_i)$.
Therefore each $w_{R,j}$ has index at most one, and so $w_{R,j}$ is either a $\lambda_i$-eigenvector or the zero vector.
Symmetrically, $w_{L,j}$ is also either a $\lambda_i$-eigenvector or the zero vector.
Thus, the columns (rows) of $\widehat{E_{\lambda_i}}$ are contained in the span of the right (left) $\lambda_i$-eigenvectors of $A$.

That $\widehat{E_{\lambda_i}} = \sum_{j=1}^{t} u_{R,j} u_{L,j} $, where $t \leq m(\lambda_i)$ and each $u_{R,j}$ ($u_{L,j}$) is a right (left) $\lambda_i$-eigenvectors of $A$, now follows from Claim \ref{everything is an outerproduct}.
\end{proof}

\subsection{Spectral Projector as the Inverse of a Singular Matrix} \label{adjugate section}

In this section we describe the connection between spectral projectors and the adjugate matrix.
The adjugate of a matrix is the transpose of the cofactor matrix.
The row $i$ column $j$ entry of the cofactor matrix of $M$, denoted $\cof(M)$, is $(-1)^{i+j}$ times the determinant of the minor of $M$ after row $i$ and column $j$ are removed.
There are may properties of $\adj(A)$ that are well-known.
For example, $A\adj(A) = |A| I$, $\adj(A)$ is a polynomial in $A$ and the trace of $A$, and $\adj(A) = 0$ if the rank of $A$ is at most $2$ less than the dimension of $A$.

Our results will follow easier once we establish that the cofactor function is a homomorphism with the multiplication operation.
That is, $\cof(M_1 M_2) = \cof(M_1) \cof(M_2)$.
Many basic tutorials of linear algebra on the Internet state that $\adj(M_1 M_2) = \adj(M_2) \adj(M_1)$, but we have yet to find a proof that does not begin by assuming that both $M_1$ and $M_2$ are invertible.
By the above properties for the adjugate, the relation clearly follows unless at least one of $M_1$ or $M_2$ has rank exactly $1$ less than the dimension.
But this criteria is satisfied by large classes of matrices, such as the Laplacian of any connected graph.

To prove that $\cof(M_1 M_2) = \cof(M_1) \cof(M_2)$, we will define the family of \emph{extended elementary matrices}.
The elementary matrices represent the different operations used while transforming a matrix into row echelon form: row addition, row multiplication, and row switching.
Any invertible matrix can be written as a product of elementary matrices.
The family of extended elementary matrices is the family of elementary matrices plus the ability to perform row multiplication with a scaling factor of $0$.

\begin{lemma} \label{Yanceys lemma}
Any matrix can be written as a product of extended elementary matrices.
\end{lemma}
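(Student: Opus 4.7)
The plan is to reduce $M$ to the rank normal form $I_r \oplus 0_{n-r}$ (where $r = \mathrm{rank}(M)$) using standard row and column operations, observe that this block-diagonal matrix is itself a short product of extended elementary matrices, and then conclude by inverting the outer transformations.

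First I would apply Gaussian row reduction to find an invertible matrix $P$ such that $PM$ is in reduced row echelon form; since $P$ is invertible, it is a product of standard elementary matrices. Next, I would use column operations (i.e.\ right-multiplication by standard elementary matrices) to clear the non-pivot entries of $PM$ and permute the pivot columns to the front, producing an invertible $Q$ with
$$ PMQ \;=\; I_r \oplus 0_{n-r}, $$
where the right-hand side denotes the block-diagonal matrix with $I_r$ in the top-left and a zero block of size $n-r$ in the bottom-right.

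The crux is then the observation that
$$ I_r \oplus 0_{n-r} \;=\; D_{r+1}(0)\, D_{r+2}(0) \cdots D_n(0), $$
where $D_i(0)$ is the matrix obtained from the identity by replacing its $(i,i)$-entry with $0$. Each $D_i(0)$ is an extended elementary matrix (a row-multiplication with scaling factor $0$), so the rank normal form is itself a product of extended elementary matrices. Substituting back yields
$$ M \;=\; P^{-1} \bigl( D_{r+1}(0)\, D_{r+2}(0) \cdots D_n(0) \bigr) Q^{-1}, $$
and since $P^{-1}$ and $Q^{-1}$ are products of standard elementary matrices, $M$ is a product of extended elementary matrices.

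No serious obstacle is anticipated, since this is essentially classical Gaussian elimination augmented by one additional generator. The only points requiring care are that the column operations must be realized by right-multiplication and that transposes and permutations of standard elementary matrices remain standard elementary matrices---both facts are immediate from the definitions. As a sanity check, the proof also automatically recovers the invertible case: if $r = n$, the product $D_{r+1}(0)\cdots D_n(0)$ is empty (equal to $I$) and we obtain $M = P^{-1}Q^{-1}$ as a product of standard elementary matrices, matching the classical statement.
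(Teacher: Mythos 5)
Your proof is correct, and it takes a genuinely different route from the paper's. You pass to the two-sided rank normal form $PMQ = I_r \oplus 0_{n-r}$ and absorb the rank deficiency into the diagonal factors $D_i(0)$ sitting between two invertible products; the paper instead works entirely with row operations: it extends a maximal independent set of rows of $M$ to a basis of $\mathbb{C}^n$, notes the resulting matrix $M'$ is invertible and hence a product of ordinary elementary matrices, and then converts $M'$ into $M$ by first zeroing out the auxiliary basis rows (the scaling-by-$0$ operations) and then rebuilding each dependent row of $M$ as the appropriate linear combination of the independent ones via row additions. Your version is arguably the more classical and shorter argument, leaning on the standard equivalence-normal-form machinery; the paper's version is self-contained at the level of row operations and makes explicit \emph{which} rows get killed and how the dependent rows are reconstituted, which matches the way the lemma is then consumed in the proof that $\cof(M_1M_2)=\cof(M_1)\cof(M_2)$ (peeling off one left factor at a time). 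Both factorizations are equally usable for that downstream application, since either way $M$ is exhibited as a finite left-to-right product of extended elementary matrices. The only point worth stating explicitly in your write-up is that right-multiplication by an elementary matrix still contributes an elementary matrix to the product (the lemma asks for a product, not for left factors only), which you do note.
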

\begin{proof}
Let $M$ be a matrix that we wish to represent as a product of extended elementary matrices.
Suppose the dimension of $M$ is $n$, and let $M_1, M_2, \ldots, M_n$ be the rows of $M$.
Let $I$ be a maximum set of rows that are linearly independent.
So for each $M_j \notin I$ there exists a set of coefficients $c_i$ such that $M_j = \sum_{M_i \in I} c_i M_i$.
Let $B$ be a linearly independent basis for $\mathbb{C}^n$ that includes $I$, and let $M'$ be a matrix whose rows are $B$.

$M'$ can be written as a product of elementary matrices $R_1 R_2 \cdots R_t$.
We transform $M'$ into $M$ using the following operations:\\
(1) perform row multiplication on each row in $B \setminus I$ with a scaling factor of $0$, and then
(2) construct each row in $M_j \in \{M_1 \ldots, M_t\} \setminus I$ using row addition based on the equations $M_j = \sum_{M_i \in I} c_i M_i$.\\
Each of the above operations can be represented by a member of the extended elementary matrices.
Thus $R_1 R_2 \cdots R_t$ can be grown into 
$$R_k' \cdots R_2' R_1' R_1 R_2 \cdots R_t = M.$$
\end{proof}

We imagine that Lemma \ref{Yanceys lemma} will make many other proofs easier.
For example, one can use it to quickly prove that the determinant is a multiplicative homomorphism.
It certainly is a crucial simplification for proving Proposition \ref{cof homo}.

\begin{proposition} \label{cof homo}
For any matrices $M_1$ and $M_2$ we have that $\cof(M_1 M_2) = \cof(M_1) \cof(M_2)$.
\end{proposition}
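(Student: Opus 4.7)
My plan is to use Lemma \ref{Yanceys lemma} as a divide-and-conquer reduction. Writing $M_1 = E_1 \cdots E_k$ and $M_2 = F_1 \cdots F_\ell$ as products of extended elementary matrices, a straightforward induction on total length reduces the proposition to establishing the auxiliary claim that $\cof(EN) = \cof(E)\cof(N)$ and $\cof(NE) = \cof(N)\cof(E)$ for every extended elementary $E$ and arbitrary $N$. The induction is especially clean because $\cof$ (unlike $\adj$) preserves the order of the factors, so no reversal arises when peeling off one factor at a time.

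For an invertible elementary $E$, left multiplication by $E$ performs one of the three standard row operations on $N$, and $\cof(E)$ admits a simple explicit form in each case: a signed permutation for a row swap, a diagonal matrix with one special entry for row scaling by a nonzero constant, and $I - c\,e_{pq}$ for the addition $E = I + c\,e_{qp}$. A direct determinantal computation, using the multilinearity of $\det$ in its rows together with the usual fact that adding a multiple of one row to another preserves the determinant, shows that the minor-wise effect of the row operation on $N$ agrees with the effect of left-multiplying $\cof(N)$ by $\cof(E)$. The right-multiplication identity follows by the symmetric column-operation argument.

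The conceptually new case is when $E$ is a degenerate extended elementary matrix, say the one that scales row $i$ by $0$; this is exactly where the textbook proof of $\adj(AB) = \adj(B)\adj(A)$ needs invertibility. Here $\cof(E)$ equals the matrix $U_i$ with a single $1$ in position $(i,i)$: every minor of $E$ either retains the zero row $i$ (forcing determinant $0$) or is obtained by deleting row $i$ and column $i$, leaving an $(n-1)\times(n-1)$ identity. The product $EN$ is $N$ with row $i$ replaced by zeros, so for $j \neq i$ the $(j,k)$-minor of $EN$ still contains the zero row $i$ and has determinant $0$, while the $(i,k)$-minor of $EN$ literally coincides with the $(i,k)$-minor of $N$. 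Thus $\cof(EN)$ has row $i$ of $\cof(N)$ in row $i$ and zeros elsewhere, which is precisely $U_i\,\cof(N) = \cof(E)\cof(N)$; the identity $\cof(NE) = \cof(N)\cof(E)$ is the mirror statement for columns.

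The main obstacle is the sign bookkeeping in the row-addition subcase of Case 1: computing $\cof(EN)_{pk}$ requires splitting the minor via multilinearity into a copy of the $(p,k)$-minor of $N$ plus $c$ times a rearranged matrix whose rows are a cyclic shift of those of the $(q,k)$-minor of $N$, introducing a sign factor of $(-1)^{q-p-1}$ that must combine correctly with the $(-1)^{p+k}$ and $(-1)^{q+k}$ factors to match the predicted $\cof(N)_{pk} - c\,\cof(N)_{qk}$. Once this single calculation is verified, the rest of the proof is just the inductive assembly promised by Lemma \ref{Yanceys lemma}.
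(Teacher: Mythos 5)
Your proposal is correct and follows essentially the same route as the paper: reduce via Lemma \ref{Yanceys lemma} to verifying $\cof(EN)=\cof(E)\cof(N)$ for a single extended elementary factor, then check the row operations by multilinearity of the determinant, with the sign bookkeeping concentrated in the row-addition case. The only cosmetic differences are that you treat row swaps and the degenerate scale-by-zero matrix directly (the paper factors swaps into additions and scalings and dismisses row multiplication as easy), and you decompose both factors when decomposing $M_1$ alone already suffices.
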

\begin{proof}
We will prove that for any extended elementary matrix $R$, we have that $\cof(R M) = \cof(R) \cof(M)$.
By Lemma \ref{Yanceys lemma}, repeated application of this statement will prove the proposition.
Moreover, row switching can be represented as a product of row addition and row multiplication, so we further assume that $R$ either represents row addition or row multiplication (by possibly a scaling factor of $0$).
Another trivial reduction is that we may assume the coefficient of row addition is $1$.

\textbf{Case 1:} row addition.  Let $R = R_{i,j}$ be the identity matrix except for the entry in row $i$, column $j$ ($i \neq j$), which equals $1$.
The matrix $R M$ is the matrix $M$, except that for each $1 \leq t \leq n$, the row $i$ column $t$ entry of $R M$ is the sum of the row $i$ column $t$ and the row $j$ column $t$ entry of $M$.
Also, $\cof(R)$ is the identity matrix except for the entry in row $j$, column $i$, which equals $-1$.
Therefore the matrix $\cof(R) \cof(M)$ is the matrix $\cof(M)$, except that for each $1 \leq t \leq n$, the row $j$ column $t$ entry of $\cof(R) \cof(M)$ is the entry in row $j$ column $t$ minus the entry in the row $i$ column $t$ entry of $\cof(M)$.

Let $C_{k,\ell}$ be the minor of $\cof(R_{i,j} M)$ used to determine the value in row $k$ column $\ell$ of $\cof(R_{i,j} M)$.
If $k = i$, then this minor is the same used to calculate the value in row $k$ column $\ell$ of $\cof(M)$.
Now suppose $k \neq i$.

Recall that $|C| = |A| + |B|$ if there exists an $r$ such that the entries of $A,B,C$ are equal in all rows except $r$, and row $r$ of $A$ and row $r$ of $B$ sum to row $r$ of $C$.
We will apply this statement with $r = i$ to compare  $\cof(R_{i,j} M)$ against $\cof(M)$.
Because $k \neq i$, we see that $|C_{k,\ell}|$ can be calculated as the sum of two matrix determinants.
The first matrix, called $A_{k,\ell}$, is the same minor used to calculate the value in row $k$ column $\ell$ of $\cof(M)$.
The second matrix, called $B_{k,\ell}$, is almost the same minor used to calculate the value in row $k$ column $\ell$ of $\cof(M)$, except with row $i$ replaced with contents of row $j$.
If $k \neq j$, then $B_{k,\ell}$ contains the contents of row $j$ of $M$ twice (once in row $i$ and once in row $j$), and therefore $|B_{k,l}| = 0$.
If $k = j$, then $B_{k,\ell}$ is $C_{i,\ell}$, except that row $j$ has been permuted into the location of row $i$, with the rows in between them shifted up/down accordingly.
This permutation will multiply the determinant by $(-1)^{i-j}$.

The previous paragraph only calculates the determinants of respective matrix minors here; we must also account for the $(-1)^{i+j}$ term in the cofactor matrix.
In particular, we will see some cancellation: $(-1)^{i-j}(-1)^{j-i} = 1$.
This concludes the proof to the proposition for Case 1.

\textbf{Case 2:} row multiplication. 
This case follows easily.
\end{proof}

Now that we have established that cofactors respect multiplication, we are prepared to describe the adjugate of a matrix with rank $1$ less than the dimension.
Note that this criteria is equivalent to the assumption $m(0) = \nu(0)$.

\begin{theorem} \label{spectral as adj}
Let $A$ be a matrix with $m(0) = \nu(0)$.
Let $v_L, v_R$ be the unique left, right $0$-eigenvectors of $A$, normalized to equal the appropriate vectors in the conjugation matrix of the Jordan Normal Form. 
We have that 
$$ \adj(A) = v_R v_L (-1)^{m(0)-1} \prod_{\lambda \neq 0} \lambda^{m(\lambda)} = (-A)^{\nu(0)-1}E_0 \prod_{\lambda \neq 0} \lambda^{m(\lambda)}. $$
\end{theorem}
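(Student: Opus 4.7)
My plan is to reduce the problem to computing the adjugate of a single nilpotent Jordan block, using Proposition \ref{cof homo} together with the Jordan Normal Form. The hypothesis $m(0) = \nu(0)$ forces there to be a single Jordan block $J_0$ of size $m := m(0)$ for the eigenvalue $0$; write $A = P J P^{-1}$ with $J$ block-diagonal and remaining blocks $J_1', \ldots, J_k'$ corresponding to the nonzero eigenvalues. Since Proposition \ref{cof homo} gives that cofactor (and hence adjugate, with the factors reversed) is a multiplicative homomorphism, one has $\adj(A) = \adj(P^{-1})\adj(J)\adj(P) = P\adj(J)P^{-1}$, using $\adj(P) = |P| P^{-1}$ and $\adj(P^{-1}) = |P|^{-1} P$ for the invertible conjugators.

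Next I would establish a block-diagonality statement for the adjugate: if $J = \mathrm{diag}(J_0, J_1', \ldots, J_k')$, then $\adj(J)$ is block-diagonal with the $i$-th diagonal block equal to $\adj(J_i) \prod_{j \neq i} |J_j|$. The diagonal-block formula is immediate from a block-triangular determinant expansion, and the off-diagonal blocks vanish by a Laplace expansion that exposes a rectangular block of zeros inside the relevant minor. Since $|J_0| = 0$ while $\prod_{i=1}^{k} |J_i'| = \prod_{\lambda \neq 0} \lambda^{m(\lambda)}$, every block of $\adj(J)$ vanishes except the one in the $J_0$ position, which equals $\adj(J_0) \prod_{\lambda \neq 0} \lambda^{m(\lambda)}$.

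Then I would compute $\adj(J_0)$ by direct inspection. For the nilpotent shift $J_0$, the minor obtained by deleting row $j$ and column $i$ vanishes unless $j = m$ (otherwise the zero last row of $J_0$ is still present), and in that case the $1$'s of rows $1, \ldots, m-1$ lie in columns $2, \ldots, m$, forcing $i = 1$ for a nonzero determinant; the surviving minor is $I_{m-1}$. Hence $\adj(J_0) = (-1)^{m-1} e_1 e_m^T$. Conjugating back: by the structure of the Jordan decomposition, the first column of $P$ is the right $0$-eigenvector $v_R$ (the leading vector of the $J_0$ chain) and the $m$-th row of $P^{-1}$ is the corresponding left $0$-eigenvector $v_L$ (the trailing row of that chain). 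Therefore $P(e_1 e_m^T) P^{-1} = v_R v_L$, which gives the first equality.

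For the second equality, I apply Theorem \ref{outerproduct} to the natural Jordan basis: take $V_R$ to be the first $m$ columns of $P$ and $V_L$ the first $m$ rows of $P^{-1}$; these are bases of the generalized right/left $0$-eigenspaces, and $V_L V_R = I_m$ because it is the upper-left $m \times m$ block of $P^{-1} P = I$. Thus $E_0 = V_R V_L$. The Jordan relation $A V_R = V_R J_0$ iterates to $A^{m-1} E_0 = V_R J_0^{m-1} V_L = V_R (e_1 e_m^T) V_L = v_R v_L$, so $(-A)^{\nu(0)-1} E_0 = (-1)^{m-1} v_R v_L$, and multiplying by $\prod_{\lambda \neq 0} \lambda^{m(\lambda)}$ recovers $\adj(A)$. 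The main obstacle will be the block-diagonality step: the vanishing of the off-diagonal blocks of $\adj(J)$ for a block-diagonal $J$ is the one part that requires a nontrivial Laplace-expansion argument rather than a routine calculation.
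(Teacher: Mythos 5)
Your proposal is correct and follows essentially the same route as the paper's proof: conjugate to Jordan Normal Form, use Proposition \ref{cof homo} to reduce to $\adj(J)$, observe that only the nilpotent block survives in the block-diagonal adjugate, compute $\adj(J_0)=(-1)^{m-1}e_1e_m^T$ directly, and identify the conjugated outerproduct with $v_Rv_L$ and with $(-A)^{\nu(0)-1}E_0$ via Theorem \ref{outerproduct}. The only cosmetic difference is that the paper handles the second equality with a projection matrix $I_0$ satisfying $Q^{-1}I_0Q=E_0$ rather than your $V_LV_R=I_m$ observation, which amounts to the same thing.
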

\begin{proof}
The first part of our proof does not assume that the geometric multiplicity of $0$ is $1$.
The claims are easier to validate in this manner.

Write $A$ in Jordon Normal Form, so that $A = Q^{-1}JQ$, where $J$ is block diagonal with each block being a Jordon block, each row of $Q$ is a left generalized $\lambda$-eigenvector of $A$, and each column of $Q^{-1}$ is a right generalized $\lambda$-eigenvector of $A$ (where $\lambda$ is the eigenvalue of the associated Jordon block).
By Proposition \ref{cof homo}, we know that $\adj(A) = \adj(Q)\adj(J)\adj(Q^{-1})$.
Because $Q$ is invertible, we know that $\adj(Q) = |Q|Q^{-1}$ and $\adj(Q^{-1}) = |Q|^{-1}Q$.
Therefore $\adj(A) = Q^{-1} \adj(J) Q$.

Suppose $J$ is composed of Jordon blocks $J_1, J_2, \ldots J_k$.
Let $\lambda_i$ denote the eigenvalue in $J_i$, and let $n(J_i)$ denote the dimension of $J_i$.
It is easy to see that if row $i$ column $j$ is not in any of the $J_\ell$, then the row $i$ column $j$ entry of $\adj(J)$ is $0$.
It follows that $\adj(J)$ is block diagonal, with blocks $T_1, T_2, \ldots, T_k$, where $T_i = \adj(J_i) \prod_{j \neq i} \lambda_j^{n(J_j)}$.
If $A$ is invertible, then this construction of $\adj(J)$ is consistent with $\adj(A) = |A|A^{-1}$.
The existence of two Jordon blocks whose eigenvalue is $0$ is equivalent to geometric multiplicity of $0$ being at least $2$, which is equivalent to the rank of $A$ being at most $2$ less than the dimension of $A$.
Therefore our construction is consistent with the fact that $\adj(A) = 0$ in this case.
Now we will use the assumption of the theorem: that the geometric multiplicity of $0$ is $1$.

Without loss of generality, assume $\lambda_1 = 0$ and $\lambda_\ell \neq 0$ for $\ell > 1$.
As we have noted, $T_\ell = 0$ for all $\ell > 1$.
It is clear that $\adj(J_1)$ is $0$ in all entries except the entry in row $1$ column $n(J_1)$, which is $(-1)^{n(J_1)+1} = (-1)^{\nu(0)-1}$.
Because $J_1$ is the only Jordon block corresponding to eigenvalue $0$, we have that we have that $n(J_1) = m(0)$.
Therefore $\adj(J)$ is zero in all entries, except the entry in row $1$ column $n(J_1)$, which is $(-1)^{\nu(0)-1}\prod_{j \neq 1} \lambda_j^{n(J_j)}$.
Row $1$ corresponds to column $1$ of $Q^{-1}$, which is the right $0$-eigenvector of $A$, which is $v_R$.
Column $n(J_1)$ corresponds to row $n(J_1)$ of $Q$, which is the left $0$-eigenvector of $A$, which is $v_L$.
In particular,  $Q^{-1} \adj(J) Q$ produces the outerproduct $v_R v_L$ times the coefficient $(-1)^{\nu(0)-1} \prod_{\lambda \neq 0} \lambda^{m(\lambda)}$.

The final step of the proof is to show that $v_R v_L = A^{\nu(0)-1}E_0$.
Let $I_0$ be the matrix that is $1$ on the diagonal wherever $J$ is $0$ on the diagonal, and $I_0$ is $0$ everywhere else.
Because of the characterization of $Q$ and $Q^{-1}$ as generalized eigenvectors and Theorem \ref{outerproduct}, we see that $Q^{-1}I_0Q = E_0$.
As $A^{\nu(0)-1} = Q^{-1}J^{\nu(0)-1}Q$, it is an easy calculation to see that $(-1)^{\nu(0)-1}\adj(J_1)$ is the restriction of $J^{\nu(0)-1}I_0$ to the rows and columns that contain the Jordon block $J_1$.
\end{proof}

As we have mentioned, if $A$ is invertible then $\adj(A) = A^{-1}|A| = A^{-1} \prod_\lambda \lambda^{m(\lambda)}$.
This is still intuitively true if $m(0) > \nu(0)$, as $\adj(A) = |A| = 0$ in this case.
Theorem \ref{spectral as adj} is the last case necessary to establish that the intuition behind $\adj(A) = A^{-1}|A|$ is true in general.

\begin{corollary} \label{adjugate theorem}
If $A$ is invertible, then
$$ A^{-1} = \sum_{\lambda}  \sum_{i=0}^{\nu(\lambda)-1} \lambda ^{-1 - i} (\lambda I - A)^{i}E_\lambda . $$
The adjugate of $A$ is 
$$ \adj(A) = \sum_{\lambda} \sum_{i=0}^{\nu(\lambda)-1} \left( \prod_{\lambda_* \neq \lambda}\lambda_*^{m(\lambda_*)} \right) \lambda^{m(\lambda)-1-i} (\lambda I - A)^{i}E_\lambda. $$
\end{corollary}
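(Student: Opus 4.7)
The plan is to treat the two identities in turn, leveraging Theorem \ref{drazin theorem} for the inverse and Theorem \ref{spectral as adj} for the adjugate in the singular case.

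First I would observe that the inverse formula is merely a repackaging of Theorem \ref{drazin theorem}: since $\lambda \neq 0$ for every eigenvalue of an invertible $A$, we can pull a $\lambda^{-i}$ inside each power $(I - A\lambda^{-1})^{i}$ to get $\lambda^{-1}\lambda^{-i}(\lambda I - A)^{i} = \lambda^{-1-i}(\lambda I - A)^{i}$. So the $A^{-1}$ identity requires no new work.

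For the adjugate, I would split into two cases. When $A$ is invertible, use $\adj(A) = |A|\,A^{-1}$ together with $|A| = \prod_{\lambda}\lambda^{m(\lambda)}$. Distributing this product through the sum just derived replaces each coefficient $\lambda^{-1-i}$ with $\lambda^{m(\lambda)-1-i}\prod_{\lambda_* \neq \lambda}\lambda_*^{m(\lambda_*)}$, which is exactly the claimed expression.

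The singular case is the step that needs some care. If $0$ is an eigenvalue, then for every $\lambda \neq 0$ the leading coefficient $\prod_{\lambda_* \neq \lambda}\lambda_*^{m(\lambda_*)}$ contains the factor $0^{m(0)}$ and thus vanishes, so only the $\lambda = 0$ summands can contribute. Inside the $\lambda = 0$ block, the factor $\lambda^{m(\lambda)-1-i} = 0^{m(0)-1-i}$ is zero unless $i = m(0)-1$, and since the sum ranges only over $0 \leq i \leq \nu(0)-1$, this forces $\nu(0) = m(0)$. So if $\nu(0) < m(0)$ the RHS is $0$, which matches $\adj(A)=0$ (since then $0$ has geometric multiplicity at least $2$, so $\operatorname{rank}(A) \leq n-2$).

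The remaining sub-case $\nu(0) = m(0)$ is where Theorem \ref{spectral as adj} does the work. Here the only surviving summand is exactly
\[
\Bigl(\prod_{\lambda_* \neq 0}\lambda_*^{m(\lambda_*)}\Bigr)(-A)^{\nu(0)-1}E_{0},
\]
which is precisely the formula given by that theorem. The main obstacle is purely bookkeeping: one must verify that the apparent ``$0^0$'' in the surviving term is being used as $1$ (the standard convention adopted already in Theorem \ref{limiting behavior}), and confirm that every other term of the sum genuinely vanishes so that the compact closed-form expression really does reduce to the two sharply different behaviors dictated by the rank of $A$.
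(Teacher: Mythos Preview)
Your proposal is correct and follows essentially the same approach as the paper: the paragraph immediately preceding the corollary sketches exactly this case split (invertible via $\adj(A)=|A|A^{-1}$ and Theorem~\ref{drazin theorem}; $m(0)>\nu(0)$ giving both sides zero; $m(0)=\nu(0)$ via Theorem~\ref{spectral as adj}), and your write-up fills in the bookkeeping the paper leaves implicit. Your justification that $\nu(0)<m(0)$ forces geometric multiplicity at least $2$ (hence $\adj(A)=0$) is correct, since a single Jordan block for $0$ would force $\nu(0)=m(0)$.
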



The proof of Theorem \ref{spectral as adj} can be adapted to calculate $(A-\lambda I)^{\nu(\lambda)-1}E_\lambda$ without requiring the assumption that $\nu(0) = m(0)$.
We will also work directly with the matrix inverse instead of the adjugate.
Recall that if $\lambda_i \neq 0$, then $\lambda_i^{1-\nu(\lambda_i)}(A - \lambda_i I)^{\nu(\lambda_i)-1}E_{\lambda_i} = \widehat{E_i}$.
The following statement is contained in Corollary 3.1 of \cite{Meyer_paper}; the proof is new (to our knowledge).

\begin{theorem}
Let $A$ be a matrix with eigenvalue $\lambda$.
Let $x_1, x_2, \ldots $ be a sequence that converges to $\lambda$ and such that $A- x_iI$ is invertible for all $i$.
Under these assumptions, 
$$ \lim_{i \rightarrow \infty} (\lambda - x_i)^{\nu(\lambda_i)}(A - x_i I)^{-1} = (A - \lambda I)^{\nu(\lambda)-1}E_{\lambda}. $$
\end{theorem}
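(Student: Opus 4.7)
The plan is to apply Corollary \ref{adjugate theorem} directly to the invertible matrix $A - x_i I$ and then pass to the limit term by term. The first step is to note that shifting $A$ by a scalar multiple of $I$ preserves all of the spectral data used in the formula. Concretely, for any eigenvalue $\mu$ of $A$, the matrix $A - x_i I$ has $\mu - x_i$ as an eigenvalue; moreover $(A - x_i I - (\mu - x_i) I)^k = (A - \mu I)^k$, so the generalized $(\mu - x_i)$-eigenspace of $A - x_i I$ equals the generalized $\mu$-eigenspace of $A$, and hence the index $\nu(\mu)$ and the spectral projector $E_\mu$ are preserved under the shift.

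Given this, applying Corollary \ref{adjugate theorem} to $A - x_i I$ gives
$$ (A - x_i I)^{-1} = \sum_{\mu} \sum_{j=0}^{\nu(\mu)-1} (\mu - x_i)^{-1-j} \bigl( (\mu - x_i) I - (A - x_i I) \bigr)^j E_\mu = \sum_{\mu} \sum_{j=0}^{\nu(\mu)-1} (\mu - x_i)^{-1-j} (\mu I - A)^j E_\mu. $$
Multiplying through by $(\lambda - x_i)^{\nu(\lambda)}$ and splitting the outer sum into the $\mu = \lambda$ piece and the $\mu \neq \lambda$ piece is the content of the rest of the argument.

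For the $\mu = \lambda$ term, the general summand becomes $(\lambda - x_i)^{\nu(\lambda) - 1 - j} (\lambda I - A)^j E_\lambda$. Since $0 \leq j \leq \nu(\lambda) - 1$, the scalar factor is a nonnegative power of $(\lambda - x_i)$, and it tends to $0$ unless $j = \nu(\lambda) - 1$; hence only the top term survives the limit, contributing $(\lambda I - A)^{\nu(\lambda)-1} E_\lambda$, which agrees with the claimed right-hand side up to the sign $(-1)^{\nu(\lambda)-1}$ that turns $(\lambda I - A)^{\nu(\lambda)-1}$ into $(A - \lambda I)^{\nu(\lambda)-1}$. For each $\mu \neq \lambda$, the prefactor $(\lambda - x_i)^{\nu(\lambda)}$ tends to $0$ while $(\mu - x_i)^{-1-j}$ converges to the finite value $(\mu - \lambda)^{-1-j}$, so every such contribution vanishes in the limit.

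The only genuine thing to check is the shift invariance of the spectral projectors, which is immediate from any of the characterizations gathered in Theorem \ref{survey} (for instance, part (h), since $\ker(A - \mu I)^{\nu(\mu)}$ and its complement $\mathcal{R}((A - \mu I)^{\nu(\mu)})$ depend only on $A - \mu I$). The remaining work is purely calculational. In short, the theorem reduces to term-by-term passage to the limit in a single application of the spectral formula for the inverse, with the power $(\lambda - x_i)^{\nu(\lambda)}$ calibrated precisely to kill every term except the top-index one attached to $E_\lambda$.
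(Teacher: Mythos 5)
Your proof is correct, and it takes a genuinely different route from the paper's. The paper argues via Jordan normal form: it writes $A - x_iI = Q^{-1}(J - x_iI)Q$ and computes the inverse of each shifted Jordan block entrywise, so that after multiplying by $(\lambda - x_i)^{\nu(\lambda)}$ only the top corner of the largest $\lambda$-blocks survives; in fact the paper's writeup stops after describing the block inverses and never carries out the limit, so your argument is the more complete of the two. You instead apply the spectral formula for the inverse (Corollary \ref{adjugate theorem}) to the shifted matrix $A - x_iI$ and pass to the limit term by term. The one hypothesis this requires --- that the indices and spectral projectors are invariant under the shift $A \mapsto A - x_iI$ --- you justify correctly, since the generalized $(\mu - x_i)$-eigenspaces of $A - x_iI$ coincide with the generalized $\mu$-eigenspaces of $A$ (via Theorem \ref{survey}(h) or Theorem \ref{outerproduct}). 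What your route buys is a basis-free argument with no bookkeeping of Jordan block sizes, staying entirely inside the paper's own spectral-projector machinery; what the Jordan route buys is an explicit entrywise formula for $(A - x_iI)^{-1}$. One substantive remark: the sign discrepancy you flag is real and lies in the theorem's statement, not in your argument. The limit is $(\lambda I - A)^{\nu(\lambda)-1}E_\lambda = (-1)^{\nu(\lambda)-1}(A - \lambda I)^{\nu(\lambda)-1}E_\lambda$, as one checks on the $2\times 2$ nilpotent Jordan block with $\lambda = 0$ and $\nu(0) = 2$, where $(0 - x)^{2}(A - xI)^{-1}$ tends to $-A$ rather than $A$. (The exponent $\nu(\lambda_i)$ in the displayed statement is also a typo for $\nu(\lambda)$.)
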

\begin{proof}
We start with the same context as the proof to Theorem \ref{spectral as adj}.
Let $A$ be written in Jordon Normal Form as $Q^{-1}JQ$, where $J$ has Jordon blocks $J_1, \ldots, J_k$.
Let $J^{(i)} = J - x_i I$, so that $Q^{-1}J^{(i)}Q$ is Jordon Normal Form for $A - x_i I$.
Let $J_1^{(i)}, \ldots, J_k^{(i)}$ be the Jordon blocks for $J^{(i)}$.
$J_j$ and $J_j^{(i)}$ are essentially the same, except that if $\lambda'$ is the eigenvalue of Jordon block $J_j$, then $(\lambda' - x_i)$ is the eigenvalue for $J_j^{(i)}$.

If $A$ is invertible, then $A^{-1} = Q^{-1}J^{-1}Q$.
If $J^{-1}$ exists, then it is block diagonal, where the blocks are $(J_1)^{-1}, \ldots, (J_k)^{-1}$.
If the eigenvalue of $J_j$ is $\lambda' \neq 0$, then the entry in row $s$ column $t$ of $(J_j)^{-1}$ is $0$ if $s > t$ and $-(-\lambda)^{s-t-1}$ if $s \leq t$.
\end{proof}

\section{Symbolic Dynamics} \label{dynamics section}
As a matter of notation, recall that the set of coordinates in an $n$-dimensional vector are in bijection with the set of vertices of $D$.
Hence, each vector can be thought of as a function $v:V \rightarrow \mathbb{C}$, and the adjacency matrix is thought of as an operator on such functions.
This context will allow us to simplify our arguments by using phrases like \emph{the support of vector $v$}, which is the set of coordinates $i$ such that $v(i) \neq 0$.
If we consider some sub-digraph $D'$ of digraph $D$, then we may transform vectors (matrices) over $D$ into vectors (matrices) over $D'$ by \emph{restricting the domain} or \emph{inducing $D'$ on $D$} as another phrase for a matrix minor.
When we have stated a partition of $D$ as $D_1, \ldots, D_k$, we use the notation $x^{(i)}$ to denote vector (matrix) $x$ induced on $D_i$.

Let $D'$ be the matrix $D$ restricted to domain $S$, and let $D''$ be the $S$-mask of $D$.
The spectral properties of $D'$ and $D''$ are almost identical.
Generalized eigenvectors of $D'$ can be transformed into generalized eigenvectors in $D''$ by placing $0$ in the coordinates not in $S$.
The other generalized eigenvectors of $D''$ are $0$-eigenvectors whose support is the compliment of $S$.
We will make strong attempts to keep the distinction between an induced matrix and a mask of a matrix, but occasionally we will swap between the two.

If the only eigenvalue of $A$ is $0$, then $A$ is nilpotent.
So for the rest of this paper, assume that the spectral radius of $A$ is positive.

Let $D$ be a digraph with irreducible components $D_1, \ldots D_t$.
We assume that the vertices are ordered so that the adjacency matrix of $D$ is in \emph{Frobenius normal form}, which we will now explain.
If vertex sets $S_1$ and $S_2$ each induce an irreducible subdigraph of $D$, and the sets of $(S_1,S_2)$-walks and $(S_2,S_1)$-walks are each non-empty, then $S_1 \cup S_2$ is a subset of an irreducible component.
Therefore for the rest of the paper we assume that the irreducible components are ordered such that the set of $(V(D_j),V(D_i))$-walks is empty when $j > i$, unless stated otherwise.
Moreover, we assume that if vertices $x,y$ satisfy $x \in D_i$ and $y \in D_j$, then $x < y$ implies that $i \leq j$.
Under these assumptions, the adjacency matrix $A$ of $D$ is then in Frobenius normal form (also known as \emph{block upper triangular form}), where the blocks correspond to the irreducible components.

The results in Section \ref{structural ergodic theory} are explicitly for general matrices, and most results in Section \ref{dynamics section} can be generalized to this setting.

An \emph{edge shift} of a digraph $D$ is the family of biinfinite walks in $D$.
The connection between edge shifts and the digraph representing a regular language is well-known (for example, see \cite{Lind_Marcus}).
In the following  we will study how modifications to $D$ will affect the associated shift.
Our main goal will be to construct a family of graphs whose associated edge shifts roughly approximates a partition of the edge shift of $D$.
Moreover, the set of walks in each member of the family should be easy to study.

\subsection{Background}

A digraph is $p$-cyclic if there exists a partition of the vertex set into disjoint classes $P_0, P_1, \ldots, P_{p-1}$ such that each arc $(u,v)$ such that $u \in P_i$ also satisfies $v \in P_{i+1}$ where the indices are taken modulo $p$.
The period of a digraph $D$ is the maximum $p$ such that $D$ is $p$-cyclic.
A digraph is \emph{aperiodic} if its period is $1$.
In particular, if arc $(v,v) \in E$ for any vertex $v$, then the digraph is aperiodic.

We now recall several facts from Perron-Frobenius theory.
Let $\rho$ be the spectral radius of adjacency matrix $A$ of irreducible digraph $D$ with period $p$.
The eigenvalues of $A$ include $w \rho$, where $w$ ranges over the solutions of $x^p - 1 = 0$, and these are exactly the dominant eigenvalues of $A$.
For each solution $w$ of $x^p - 1 = 0$ the eigenvalue $w \rho$ has algebraic multiplicity $1$.
Let $u_w$ be a right $(w\rho)$-eigenvector of $A$.
The eigenvector $u_1$ has positive real entries for all coordinates; if coordinate $i$ of $u_w$ corresponds to a vertex in $P_j$, then coordinate $i$ of $u_w$ equals $w^j$ times coordinate $i$ of $u_1$.
If $v_w$ is a left $(w\rho)$-eigenvector of $A$, then coordinate $i$ of $v_w$ equals $w^{-j}$ times coordinate $i$ of $v_1$.

The following is a standard result; it also clearly follows from Theorem \ref{limiting behavior} and Theorem \ref{outerproduct}.
We include the proof here because we build off of it in future results.

\begin{proposition}[Theorem 4.5.12 in \cite{Lind_Marcus}] \label{primitive behaves nicely}
Let $A$ be an adjacency matrix for a primitive digraph with spectral radius $\rho$.
Let $v_L, v_R$ be left, right $\rho$-eigenvectors of $A$, normalized such that $v_L v_R = 1$.
For each row vector $w_L$ and column vector $w_R$ there exists $C, \epsilon > 0$ such that 
$$ w_L A^m w_R = \left( (w_L v_R)(v_L w_R) + p(m) \right) \rho^m, $$
where $p(m) < C ( 1 - \epsilon)^m$.
\end{proposition}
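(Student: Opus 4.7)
The plan is to apply Theorem \ref{limiting behavior} directly, using the Perron--Frobenius facts recalled just before the proposition to simplify the dominant eigenvalue contribution. Since $A$ is primitive, $\rho$ is an eigenvalue of algebraic multiplicity $1$, and every other eigenvalue $\lambda$ satisfies $|\lambda| < \rho$ strictly; in particular $\nu(\rho) = m(\rho) = 1$.

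First I would invoke Theorem \ref{outerproduct} with $m(\rho)=1$: taking $V_R = v_R$ and $V_L = v_L$, the ``matrix'' $V_L V_R = v_L v_R = 1$ is already normalized to be invertible, so $E_\rho = v_R v_L$. Then I would apply Theorem \ref{limiting behavior} to write
$$
A^m \;=\; \rho^m v_R v_L \;+\; \sum_{\lambda \neq \rho}\sum_{j=0}^{\nu(\lambda)-1} \binom{m}{j}\lambda^{m-j}(A-\lambda I)^j E_\lambda,
$$
where the first term comes from the $\lambda=\rho$, $j=0$ contribution and uses $\nu(\rho)=1$. Sandwiching by $w_L$ on the left and $w_R$ on the right and dividing through by $\rho^m$ isolates the claimed leading coefficient $(w_L v_R)(v_L w_R)$ and leaves a remainder
$$
p(m) \;=\; \sum_{\lambda \neq \rho}\sum_{j=0}^{\nu(\lambda)-1} \binom{m}{j}\Bigl(\frac{\lambda}{\rho}\Bigr)^{m-j} w_L (A-\lambda I)^j E_\lambda\, w_R.
$$

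To bound $p(m)$, I would set $\rho' := \max\{|\lambda| : \lambda \text{ an eigenvalue of } A,\ \lambda \neq \rho\}$; by primitivity $\rho' < \rho$. Choose any $\epsilon > 0$ with $(1-\epsilon) > \rho'/\rho$. For each fixed $j < \nu(\lambda)$ the scalar $\binom{m}{j}(\rho'/\rho)^m / (1-\epsilon)^m$ tends to $0$ as $m\to\infty$, so it is uniformly bounded. Since the outer sum is finite (finitely many eigenvalues, each with $\nu(\lambda) \le n$) and each quadratic form $|w_L (A-\lambda I)^j E_\lambda w_R|$ is a fixed constant, we get $|p(m)| \le C(1-\epsilon)^m$ for a suitable $C$ depending on $A$, $w_L$, and $w_R$.

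I do not anticipate a real obstacle here: with the heavy machinery of Theorem \ref{limiting behavior} and Theorem \ref{outerproduct} already in hand, everything is bookkeeping, and the mildly delicate point is just absorbing the polynomial factor $\binom{m}{j}$ into the geometric factor $(\rho'/\rho)^m$, which is why $\epsilon$ must be chosen strictly smaller than $1 - \rho'/\rho$ rather than equal to it.
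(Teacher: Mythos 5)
Your proof is correct, but it is not the proof the paper gives --- in fact it is precisely the alternative the paper flags and then deliberately declines to write out (``it also clearly follows from Theorem \ref{limiting behavior} and Theorem \ref{outerproduct}. We include the proof here because we build off of it in future results''). You route everything through the spectral-projector expansion of $A^m$: primitivity gives $m(\rho)=\nu(\rho)=1$, Theorem \ref{outerproduct} collapses $E_\rho$ to $v_Rv_L$ under the normalization $v_Lv_R=1$, and the tail is a finite sum over eigenvalues of modulus at most $\rho'<\rho$, where the binomial factor $\binom{m}{j}$ is absorbed by choosing $\epsilon<1-\rho'/\rho$ strictly --- you correctly identify that as the only delicate point. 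The paper instead argues bare-handed: it takes the $A$-invariant subspace $U=\{w: wv_R=0\}$, writes $w_L=(w_Lv_R)v_L+u$ with $u\in U$, and observes that the restriction of $\lambda_*^{-1}A$ to $U$ tends to zero for $\lambda_*=\rho(1-\epsilon)$ above the second-largest eigenvalue modulus. What your approach buys is brevity given the Section \ref{lin alg section} machinery; what the paper's approach buys is a self-contained argument whose decomposition-into-an-invariant-complement technique is reused verbatim in Proposition \ref{periodic behaves nicely} and Theorem \ref{aperiodic reducible}, which is the stated reason for including it. Two trivia worth a half-sentence each if you wrote this up: the fact you need is algebraic (not merely geometric) simplicity of $\rho$, which the paper records in the Perron--Frobenius background of Section \ref{dynamics section}; and the degenerate cases $\rho'=0$ or ``no other eigenvalues'' make $p(m)$ eventually zero, so the bound holds vacuously there.
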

\begin{proof}
Let $n$ be the dimension of $A$, and let $U$ be the $n-1$ dimensional subspace of row vectors orthogonal to $v_R$ (so $U = \{w: w v_R = 0\}$).
Because $v_R$ is an eigenvector and $\rho 0 = 0$, we have that $U$ is closed from multiplication on the right by $A$ (so $U A \subseteq U$).
Let $u_1, u_2, \ldots, u_{n-1}$ be a basis for $U$.
Also note that $v_L \notin U$, as $v_L, v_R$ are all positive reals (and so our normalization $v_L v_R = 1$ is always possible).
So $v_L, u_1, \ldots, u_m$ is a basis for $\mathbb{R}^n$.
Let $w_L = a_v v_L + \sum_{i=1}^{n-1} a_i u_i$.
To calculate $a_v$, notice that $w_L - a_v v_L \in U$, so $(w_L - a_v v_L) v_R = 0$.
By assumption $v_L v_R = 1$, so $a_v = w_L v_R$.

Let $\epsilon > 0$ be such that $\lambda_* = \rho(1 - \epsilon)$ is larger than the second largest eigenvalue of $A$.
The eigenvalues of $\lambda_*^{-1} A$ restricted to $U$ are all strictly less than $1$ and so the restriction of $(\lambda_* ^{-1} A)^m$ to $U$ converges to the zero matrix as $m$ increases.
As a consequence, if we let $p_*(m) = \left(\sum_{i=1}^{n-1} a_i u_i \right) (\lambda_*^{-1}A)^m w_R$, then $p_*(m) \rightarrow 0$.
Observe,
\begin{eqnarray*}
  w_L A^n w_R	& = &	\left(a_v v_L + \sum_{i=1}^{n-1} a_i u_i \right) A^n w_R \\
		& = &	(w_L v_R) v_L A^n w_R + \lambda_*^n \left(\sum_{i=1}^{n-1} a_i u_i \right) (\lambda_*^{-1}A)^n w_R \\
		& = & 	\lambda^n \left( (w_L v_R) (v_L w_R) + (1-\epsilon)^n p_*(n) \right).
\end{eqnarray*}
\end{proof}

\begin{definition}
Let $D$ be a digraph.
The digraph $D^r$ has the same vertex set as $D$, and the arcs from vertex $a$ to vertex $b$ are in bijection with the set of $(a,b)$-walks in $D$ of length $r$.
The digraph $D^r$ is called \emph{the $r$ power of $D$}.
\end{definition}

If $A$ is the adjacency matrix of $D$, then $A^r$ is the adjacency matrix of $D^r$.
Recall that $\lambda^r$ is an eigenvalue of $A^r$ if $\lambda$ is an eigenvalue of $A$.
Moreover, the set of $\lambda$-eigenvectors of $A$ are $\lambda^r$-eigenvectors of $A^r$.
We are interested in powers of a matrix, because if $D$ is irreducible and has period $p$, then there exists a unique dominant eigenvalue of $D^p$ (unfortunately, with geometric and algebraic multiplicity $p$).

If $D$ is irreducible and has periodic classes $P_0, \ldots, P_{p-1}$, then $D^p$ has $p$ connected components corresponding to the periodic classes.
Let $D_i$ be the subdigraph of $D^p$ induced on vertex set $P_i$.
It is known (see Section 4.5 of \cite{Lind_Marcus}) that $D_i$ is primitive for each $i$.
Furthermore, $A^p$ is a matrix that is block diagonal: the $(i,j)$ entry is nonzero only if vertices $i$ and $j$ are in the same periodic class.
Then $(A^p)^{(i)}$ is the adjacency matrix for a $D_i$.
By the above, $\left((A^p)^{(i)}\right)^n$ denotes all walks of length $pn$ in $A$ that start (or end) at $P_i$.

The following result is Exercise 4.5.14 in \cite{Lind_Marcus} after the correction recorded in the textbook's errata.
As the techniques are repeated in later arguments, we again include the proof here.
Specifically, our proof to Proposition \ref{periodic behaves nicely} is a light introduction into our plan to break an edge shift into digestible chunks.

\begin{proposition} \label{periodic behaves nicely}
Let $D$ be an irreducible digraph with period $p$ and adjacency matrix $A$ with dominant eigenvalue $\lambda$.
Let $v_L, v_R$ be left, right $\lambda$-eigenvectors of $A$, normalized such that $v_L v_R = p$.
For a vector $v$, let $v^{(i)}$ denote the subvector induced on periodic class $i$.
For each row vector $w_L$, column vector $w_R$, and integer $k$ there exists $C, \epsilon > 0$ such that 
$$ w_L A^{pm + k} w_R = \left(\sum_{i=1}^p (w_L^{(i)} v_R^{(i)})(v_L^{(i+k)}w_R^{(i+k)}) + q(m) \right) \rho^{pm+k}, $$
where $q(m) < C ( 1 - \epsilon)^m$.
\end{proposition}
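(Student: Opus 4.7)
The plan is to reduce the statement to Proposition \ref{primitive behaves nicely} by exploiting that $A^p$ is block diagonal on the periodic classes, with each diagonal block primitive. Write $A^{pm+k} = A^k A^{pm}$ and let $\tilde{w}_L = w_L A^k$ be a new row vector. Since every arc of $D$ goes from $P_i$ to $P_{i+1}$, the matrix $A^{pm}$ has $(u,v)$-entry zero unless $u$ and $v$ lie in the same periodic class, so
$$ w_L A^{pm+k} w_R = \sum_{i=1}^{p} \tilde{w}_L^{(i)} \bigl((A^p)^{(i)}\bigr)^m w_R^{(i)}. $$
By the discussion preceding the proposition, each $(A^p)^{(i)}$ is primitive with spectral radius $\rho^p$, and the restrictions $v_L^{(i)}, v_R^{(i)}$ are left and right $\rho^p$-eigenvectors of this block.

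Next I would verify the normalization required by Proposition \ref{primitive behaves nicely}. Letting $A_i$ denote the $P_i$-to-$P_{i+1}$ block of $A$, the identities $A v_R = \rho v_R$ and $v_L A = \rho v_L$ give $A_i v_R^{(i+1)} = \rho v_R^{(i)}$ and $v_L^{(i)} A_i = \rho v_L^{(i+1)}$. Hence
$$ v_L^{(i+1)} v_R^{(i+1)} = \rho^{-1} v_L^{(i)} A_i v_R^{(i+1)} = v_L^{(i)} v_R^{(i)}, $$
so all $p$ products $v_L^{(i)} v_R^{(i)}$ are equal. Since they sum to $v_L v_R = p$, each equals $1$, which is exactly the normalization needed by Proposition \ref{primitive behaves nicely}.

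Applying that proposition to each primitive block yields
$$ \tilde{w}_L^{(i)} \bigl((A^p)^{(i)}\bigr)^m w_R^{(i)} = \bigl((\tilde{w}_L^{(i)} v_R^{(i)})(v_L^{(i)} w_R^{(i)}) + p_i(m)\bigr) \rho^{pm}, $$
with $p_i(m) < C_i(1-\epsilon_i)^m$. The remaining step is to rewrite $\tilde{w}_L^{(i)} v_R^{(i)}$ in terms of $w_L$. Because $A^k v_R = \rho^k v_R$ and $A^k$ sends $P_j$ into $P_{j+k}$ (indices mod $p$), one checks that the restriction of $A^k$ to the $(P_{i-k},P_i)$-block sends $v_R^{(i)}$ to $\rho^k v_R^{(i-k)}$, giving $\tilde{w}_L^{(i)} v_R^{(i)} = \rho^k w_L^{(i-k)} v_R^{(i-k)}$. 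Substituting and re-indexing by $j = i-k$ turns the leading term into $\rho^{pm+k}\sum_j (w_L^{(j)} v_R^{(j)})(v_L^{(j+k)} w_R^{(j+k)})$; setting $q(m) = \rho^{-k}\sum_i p_i(m)$ with $C = \rho^{-k}\sum_i C_i$ and $\epsilon = \min_i \epsilon_i$ absorbs the error terms into a single exponentially decaying bound.

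The hard part will be the bookkeeping of the periodic-class indexing: $A^k$ acting on the left shifts labels in the opposite direction from how it acts on the right, so it is easy to get the $i+k$ in the conclusion backwards. Once this shift is handled correctly and the normalization $v_L^{(i)} v_R^{(i)} = 1$ is established, the rest is a direct invocation of Proposition \ref{primitive behaves nicely} applied blockwise.
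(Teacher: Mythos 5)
Your proposal is correct and follows essentially the same route as the paper: split off $A^k$, use the block-diagonality of $A^{pm}$ on the periodic classes, verify $v_L^{(i)}v_R^{(i)}=1$ by the telescoping identity $\rho\, v_L^{(i)}v_R^{(i)} = v_L^{(i)}A\,v_R^{(i+1)} = \rho\, v_L^{(i+1)}v_R^{(i+1)}$, and invoke Proposition \ref{primitive behaves nicely} blockwise. The only cosmetic difference is that you absorb $A^k$ into $w_L$ and reindex, whereas the paper sets $w_* = A^kw_R$ and computes $v_L^{(i)}w_*^{(i)} = \rho^k v_L^{(i+k)}w_R^{(i+k)}$ directly; your index bookkeeping comes out the same.
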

\begin{proof}
For the extent of this proof, let $w^{(i)}$ denote the $P_i$-mask of $w$ rather than the induced sub-digraph/matrix minor.
Note that the statement of the proposition is equivalent.

So $w^{(i)}$ is an $n$-dimensional vector whose support is contained by the vertex set $P_i$ (and the values of $w^{(i)}$ in the coordinates in $P_i$ match the values in $w$). 
When we use this notation, the index $i$ is taken modulo $p$.
By the cyclic nature of $A$, for all row vectors $w$, we have that $w^{(i)} A^k = (w A^k)^{(i+k)}$.
Symmetrically, for all column vectors $w$, we have that $(A^k w)^{(i)} = A^k w^{(i+k)}$.
Furthermore, $\sum_i v^{(i)} w^{(i)} = v w$ for all row vectors $v$ and column vectors $w$, as $v^{(i)}w^{(j)} = 0$ when $i \neq j$ due to disjoint support.

First, we claim that $v_L^{(i)} v_R^{(i)} = 1$ for all $i$.
By the definition of eigenvector, we have that $A v_R^{(i+1)} = (A v_R)^{(i)} = \rho v_R^{(i)}$ and symmetrically $v_L^{(i)} A = \rho v_L^{(i+1)}$.
Therefore $\rho v_L^{(i)} v_R^{(i)} = v_L^{(i)} A v_R^{(i+1)} = \rho v_L^{(i+1)} v_R^{(i+1)}$.
Because $\rho \neq 0$, we have that  $v_L^{(i)} v_R^{(i)} = v_L^{(j)} v_R^{(j)}$ for all $i,j$.
The claim then follows from $v_L v_R = p$.

Consider the term $w_L^{(i)}A^{pm} w_*^{(j)}$, where $w_* = A^{k}w_R$.
These terms allow us to split our final goal into smaller parts: $w_L A^{pm + k} w_R = \sum_i \sum_j w_L^{(i)} A^{pm} w_* ^{(j)}$.
Because the indices are taken modulo $p$, we have that $w_L^{(i)}A^{pm} w_*^{(j)} = (w_L A^{pm})^{(i)} w_*^{(j)}$, which equals zero when $i \neq j$.
So we can restrict our attention to the case that $i = j$.
Recall that $A^p$ forms $p$ primitive digraphs whose vertex sets are the periodic classes $P_0,\ldots,P_{p-1}$.
Because these subdigraphs are disconnected components, we have that $w_L^{(i)} A^{pm} w_* ^{(i)} = w_L^{(i)} \left((A^p)^{(i)}\right)^{m} w_* ^{(i)} $.
Now apply Proposition \ref{primitive behaves nicely} to $\left((A^p)^{(i)}\right)^{m}$ to see that 
\begin{eqnarray*}
w_L^{(i)} A^{pm} w_* ^{(i)} 	& = & \left( (v_L^{(i)} w_*^{(i)}) (w_L^{(i)} v_R^{(i)}) + q(n) \right) (\rho^p)^m \\
				& = & \left( (v_L^{(i)} (A^{k}w_R)^{(i)}) (w_L^{(i)} v_R^{(i)}) + q(n) \right) \rho^{pm} \\
				& = & \left( (v_L^{(i)} A^k w_R^{(i+k)}) (w_L^{(i)} v_R^{(i)}) + q(n) \right) \rho^{pm} \\
				& = & \left( (v_L^{(i+k)} w_R^{(i+k)}) (w_L^{(i)} v_R^{(i)}) + q(n)\rho^{-k} \right) \rho^{pm + k}.
\end{eqnarray*}
\end{proof}

\subsection{ Structural Results} \label{structural ergodic theory}

Recall that a generalized right $\lambda$-eigenvector with index $\nu$ is a vector $w$ such that $(A-\lambda I)^{\nu - 1}w \neq 0$ and $(A-\lambda I)^{\nu}w = 0$.
In the following, we consider the $0$ vector to be the unique vector with index $0$.

\begin{lemma} \label{original to components}
Let $M$ be a general matrix with associated digraph $D$ that has irreducible components $D_1, \ldots D_t$.
Let $M_1, \ldots, M_t$ be the submatrices of $M$ corresponding to $D_1, \ldots D_t$.
For a vector $v$, let $v^{(j)}$ be the sub-vector induced on $M_j$.
Let $v$ be a fixed generalized right $\lambda$-eigenvector of $A$, and let $i$ be the largest index such that $v^{(i)} \neq 0$ (we are assuming that $D$ is in Frobenius normal form).
Under these conditions, if $v$ has index $\nu$, then $v^{(i)}$ is a generalized right $\lambda$-eigenvector of $M_i$ with index at most $\nu$.
\end{lemma}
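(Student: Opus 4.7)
The plan is straightforward: exploit the block upper triangular structure of $M$ forced by the Frobenius normal form ordering, together with the assumption that $v^{(j)} = 0$ for all $j > i$.

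First I would observe that under the Frobenius ordering, $M$ decomposes into a block upper triangular form whose diagonal blocks are $M_1, \ldots, M_t$; write $B_{jj'}$ for the block of $M$ from row block $j$ to column block $j' > j$. The matrix $M - \lambda I$ inherits the same block structure with diagonal blocks $M_j - \lambda I$, and the product of any two block upper triangular matrices is block upper triangular with diagonal blocks equal to the products of the diagonal blocks. Consequently, for every $k \geq 0$, the matrix $(M - \lambda I)^k$ is block upper triangular with $j$th diagonal block $(M_j - \lambda I)^k$.

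Next I would prove by induction on $k$ the following double claim: for the chosen $v$ and the index $i$ in the statement,
$$ \bigl((M - \lambda I)^k v\bigr)^{(j)} = 0 \text{ for all } j > i, \qquad \bigl((M - \lambda I)^k v\bigr)^{(i)} = (M_i - \lambda I)^k v^{(i)}. $$
The base case $k = 0$ is the defining property of $i$. For the inductive step, set $u = (M - \lambda I)^{k-1} v$; by the inductive hypothesis $u^{(j)} = 0$ for $j > i$ and $u^{(i)} = (M_i - \lambda I)^{k-1} v^{(i)}$. The block upper triangular structure gives
$$ \bigl((M - \lambda I) u\bigr)^{(j)} = (M_j - \lambda I) u^{(j)} + \sum_{j' > j} B_{jj'} u^{(j')}. $$
For $j > i$ both $u^{(j)}$ and all $u^{(j')}$ with $j' > j > i$ vanish, so the left-hand side is $0$. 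At $j = i$, every term in the sum vanishes by the inductive hypothesis, leaving $(M_i - \lambda I) u^{(i)} = (M_i - \lambda I)^k v^{(i)}$, as required.

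Finally I would specialize to $k = \nu$. Because $v$ has index $\nu$, we have $(M - \lambda I)^\nu v = 0$, so reading off the $i$th block of this identity and invoking the claim yields $(M_i - \lambda I)^\nu v^{(i)} = 0$. Since $v^{(i)} \neq 0$ by choice of $i$, this identifies $v^{(i)}$ as a nonzero generalized right $\lambda$-eigenvector of $M_i$ whose index is at most $\nu$, completing the proof. There is essentially no technical obstacle here; the Frobenius ordering does the structural work and the maximality of $i$ ensures that no contributions leak into the $i$th block from the strictly upper part.
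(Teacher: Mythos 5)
Your proof is correct and follows essentially the same route as the paper's: both rest on the block upper triangular structure forced by the Frobenius ordering together with the maximality of $i$, the only difference being that you induct on the power $k$ applied to the fixed vector $v$ while the paper inducts on the index of the generalized eigenvector. Your packaging is slightly cleaner, since tracking the two invariants simultaneously avoids the paper's small case split over whether the top nonzero block of $(M-\lambda I)v$ drops below $i$.
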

\begin{proof}
Under the assumptions of the lemma, $M$ is in \emph{block upper triangular form}.
In other words, $M$ can be represented by a $t\times t$ matrix $B$ whose row $j$ column $j'$ entry $b_{j,j'}$ satisfies the following properties: $b_{j,j'}$ is a rectangular matrix, $M_{j,j} = M_j$, and $b_{j,j'}$ is a matrix of all zeroes when $j > j'$.

Let $v$ be a generalized $\lambda$-eigenvector for $M$ with index $\ell$.
Consider the rectangular matrix $R_i = (b_{i,1}, b_{i,2}, b_{i,3}, \ldots, b_{i,t})$, which is the set of rows of $M$ that correspond to $D_i$.
The product $R_i v$ is $\sum_{j=1}^t b_{i,j}v^{(j)}$.
By assumption on the form of $D$, we have that $b_{i,j} = 0$ when $j < i$, and by choice of $i$ we have that $v^{(j)} = 0$ when $j > i$.
So $R_i v = b_{i,i}v^{(i)} = M_i v^{(i)}$.
By construction, for any vector $w$ we have that $(M w)^{(i)} = R_i w$.
Thus we conclude that $(M v)^{(i)} = M_i v^{(i)}$.
By a similar argument, 
\begin{equation}\label{no backsies}
\mbox{if $j > i$, then $(M v)^{(j)} = 0$.}
\end{equation}

We proceed by induction on $\ell$.
First, suppose that $\ell = 1$.
By definition, we have that $(M - \lambda I)^\nu v = 0$, and so $\left((M - \lambda I)^\nu v\right)^{(i)} = 0$.
Then,
\begin{eqnarray*}
	0 &=& \left((M - \lambda I) v\right)^{(i)}\\
	  &=&  (M v)^{(i)} - \lambda v^{(i)} \\
	  &=& (M_i - \lambda) v^{(i)}, 
\end{eqnarray*}
and so $v^{(i)}$ is an eigenvector of $M_i$ as claimed in the statement of the lemma.
Now suppose that $\ell > 1$.

Let $v' = (M - \lambda I)v$, so that $v'$ is a $\lambda$-eigenvector for $M$ with index $\ell - 1$.
We claim that $(v')^{(i)}$ is a generalized $\lambda$-eigenvector for $M_i$ with index at most $\ell - 1$.
Let $i'$ be the largest index such that $v'^{(i')} \neq 0$.
By (\ref{no backsies}), we have that $i' \leq i$. 
If $i' = i$, then the claim follows from induction.
If $i' < i$, then $(v')^{(i)}$ has index $0$ and the claim follows.

Therefore 
\begin{eqnarray*}
 (M_i - \lambda I)^{\ell} v^{(i)} 	& = &	(M_i - \lambda I)^{\ell-1} \left( (M_i - \lambda I) v \right)^{(i)} \\
					& = &   (M_i - \lambda I)^{\ell} (v')^{(i)} \\
					& = &   0 .
\end{eqnarray*}
\end{proof}

A symmetric argument gives a similar result to Lemma \ref{original to components} for generalized left eigenvectors, with the change that $i$ should be minimized instead of maximized.
The following corollary is then a simple application of Lemma \ref{original to components} with an understanding of how Frobenius normal form orders the irreducible components.

\begin{corollary} \label{up set, down set}
Let $M$ be a general matrix with associated digraph $D$ that has irreducible components $D_1, \ldots D_t$.
Let $M_1, \ldots, M_t$ be the submatrices of $M$ corresponding to $D_1, \ldots D_t$.
Let $v_R$ and $v_L$ be right and left generalized $\lambda$-eigenvectors of $M$.
\begin{itemize}
	\item If $v_R$ is nonzero in coordinate $u$, then there exists a path from $u$ to $w$, where $w \in D_i$, $\lambda$ is an eigenvalue of $M_i$, and $v_R$ is nonzero at $w$.
	\item If $v_L$ is nonzero in coordinate $u$, then there exists a path from $w$ to $u$, where $w \in D_j$, $\lambda$ is an eigenvalue of $M_j$, and $v_L$ is nonzero at $w$.
\end{itemize}
\end{corollary}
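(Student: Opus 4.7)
The plan is to reduce to Lemma \ref{original to components} by restricting to an appropriate sub-digraph. Applying the lemma directly to $M$ would only tell us about the highest-indexed component where $v_R$ is nonzero, but that component need not be reachable from $u$ (for instance, if $D_1$ and $D_2$ are incomparable components both flowing into $D_3$, and $u \in D_1$ while $v_R$ is supported on $D_2$). The fix is to first restrict $M$ to exactly the vertices that are reachable from $u$, and then apply the lemma to that restriction.

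Concretely, let $R(u)$ be the set of vertices $x$ such that $D$ contains a walk from $u$ to $x$ (including $u$ itself). Since each $D_i$ is strongly connected, $R(u)$ is a union of irreducible components; write the ones it contains as $D_{i_1}, \ldots, D_{i_r}$, where $u \in D_{i_1}$. By construction $R(u)$ is forward-closed: no arc of $D$ leaves $R(u)$. Let $M_R$ be the principal submatrix of $M$ on index set $R(u)$. The forward-closure means that for any column vector $w$, $(Mw)|_{R(u)} = M_R (w|_{R(u)})$, and iterating gives $((M-\lambda I)^\nu v_R)|_{R(u)} = (M_R - \lambda I)^\nu (v_R|_{R(u)})$. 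Since $v_R$ is a generalized $\lambda$-eigenvector of $M$, the vector $v_R|_{R(u)}$ is a generalized $\lambda$-eigenvector of $M_R$, and it is nonzero because its $u$-coordinate is.

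Now $M_R$ inherits Frobenius normal form from $M$, with irreducible components $D_{i_1}, \ldots, D_{i_r}$ and corresponding blocks $M_{i_1}, \ldots, M_{i_r}$. Apply Lemma \ref{original to components} to $M_R$ and the eigenvector $v_R|_{R(u)}$: there is a largest index $i^* \in \{i_1,\ldots,i_r\}$ such that $(v_R|_{R(u)})|_{D_{i^*}} \neq 0$, and this restriction is a generalized $\lambda$-eigenvector of $M_{i^*}$. In particular $\lambda$ is an eigenvalue of $M_{i^*}$, and we may pick $w \in D_{i^*}$ with $v_R(w) \neq 0$. Since $D_{i^*} \subseteq R(u)$, there is a directed path from $u$ to $w$, completing the first bullet.

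The second bullet is symmetric: replace $R(u)$ by the set $R^{-1}(u)$ of vertices from which $u$ is reachable, which is backward-closed, and apply the analogue of Lemma \ref{original to components} for left generalized eigenvectors (mentioned in the text, with the largest index replaced by the smallest). The only substantive point requiring care in either direction is the forward/backward-closure of the restricted vertex set, which is exactly what guarantees that truncating $v_R$ (respectively $v_L$) yields a generalized eigenvector of the submatrix; everything else is a direct invocation of the lemma.
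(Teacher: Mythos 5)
Your proof is correct and follows the paper's intended route: the paper dispatches this corollary as ``a simple application of Lemma \ref{original to components} with an understanding of how Frobenius normal form orders the irreducible components,'' and your restriction of $M$ to the forward-closed set $R(u)$ (respectively the backward-closed set $R^{-1}(u)$) before invoking the lemma is precisely the detail that makes that one-liner rigorous, since as you observe the largest-index component supporting $v_R$ need not be reachable from $u$. The same effect could be achieved by reordering the components so that those reachable from $u$ receive the largest indices, but your version is equivalent and complete.
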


\subsection{Constructive Results}

\begin{theorem} \label{aperiodic reducible}
Let $D$ be a digraph with spectral radius $\rho$ and irreducible components $D_1, \ldots D_t$ whose respective adjacency matrices are $A$ and $A_1, \ldots, A_t$.
Suppose $A_1$ is the unique irreducible component with spectral radius $\rho$ (we are not assuming $D$ is in Frobenius normal form here).
Let $v_L, v_R$ be left, right $\rho$-eigenvectors of $A$, normalized such that $v_L v_R = 1$.
If $D_1$ is aperiodic, then for each row vector $w_L$ and column vector $w_R$ there exists $C, \epsilon > 0$ such that 
$$ w_L A^n w_R = \left( (w_L v_R)(v_L w_R) + p(n) \right) \rho^n, $$
where $p(n) < C ( 1 - \epsilon)^n$.
\end{theorem}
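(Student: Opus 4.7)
The plan is to deduce this theorem as a direct corollary of Corollary \ref{asymp growth for lin alg} and Theorem \ref{outerproduct} from Section \ref{lin alg section}, plus a bookkeeping argument that bounds the error term.

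First I would establish that $\rho$ is a simple, strictly dominant eigenvalue of $A$. The characteristic polynomial of $A$ factors as $\prod_i p_{A_i}(x)$, a fact noted in the introduction, so the eigenvalues of $A$ (with multiplicity) are the multiset union of the eigenvalues of the $A_i$. By Perron-Frobenius, the primitivity of $A_1$ implies that $\rho$ is a simple eigenvalue of $A_1$ and that every other eigenvalue of $A_1$ has magnitude strictly less than $\rho$. By hypothesis, every eigenvalue of every other $A_i$ also has magnitude strictly less than $\rho$. Hence $m(\rho) = 1$ in $A$, which forces $\nu(\rho) = 1$, and $\rho$ is the unique eigenvalue of $A$ of maximum magnitude.

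Next I would invoke Theorem \ref{limiting behavior} to write
$$ A^n = \rho^n E_\rho + \sum_{\lambda \neq \rho} \sum_{j=0}^{\nu(\lambda)-1} \binom{n}{j} \lambda^{n-j} (A - \lambda I)^j E_\lambda . $$
Let $\lambda_*$ be an eigenvalue of $A$ with the second-largest magnitude, so $|\lambda_*| < \rho$, and let $\nu_{\max} = \max_\lambda \nu(\lambda)$. Each term in the remainder is bounded in norm by a constant times $n^{\nu_{\max}-1} |\lambda_*|^n$. Choosing any $\epsilon > 0$ with $(1 - \epsilon) \rho > |\lambda_*|$ absorbs the polynomial factor into the exponential gap, so the remainder is bounded by $C' \rho^n (1 - \epsilon)^n$ for a suitable constant $C'$ depending only on $A$.

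Finally I would apply Theorem \ref{outerproduct} to the eigenvalue $\rho$. Since $m(\rho) = 1$, the matrices $V_R = v_R$ and $V_L = v_L$ each have a single column / row, $V_L V_R = v_L v_R = 1$ by the normalization, and the theorem yields $E_\rho = v_R v_L$. Therefore $w_L E_\rho w_R = (w_L v_R)(v_L w_R)$ since these quantities are scalars, and multiplying the asymptotic expansion on the left by $w_L$ and on the right by $w_R$ (then factoring out $\rho^n$) gives the stated identity. There is no serious obstacle: all of the machinery was built in Section \ref{lin alg section}. The only mild care is in the error estimate, where a polynomial prefactor must be absorbed into the exponential decay provided by the spectral gap between $\rho$ and $|\lambda_*|$.
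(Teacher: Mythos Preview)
Your argument is correct, but it takes a genuinely different route from the paper's own proof. The paper stays within the ``symbolic dynamics'' framework of Section \ref{dynamics section}: it invokes Lemma \ref{original to components} and Corollary \ref{up set, down set} to show that $v_L^{(1)}, v_R^{(1)}$ are the Perron eigenvectors of $A_1$, that $v_L, v_R$ are the \emph{unique} dominant eigenvectors of $A$, and that $v_L v_R = v_L^{(1)} v_R^{(1)} \neq 0$ (so the normalization is attainable); it then announces that the proof of Proposition \ref{primitive behaves nicely} goes through verbatim, i.e.\ it decomposes $w_L$ in the basis $v_L, u_1, \ldots, u_{n-1}$ with $u_i \in v_R^\perp$ and reads off the coefficient of $v_L$ as $w_L v_R$. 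You instead import the spectral machinery of Section \ref{lin alg section} wholesale: Theorem \ref{limiting behavior} gives the exact expansion of $A^n$, Theorem \ref{outerproduct} with $m(\rho)=1$ identifies $E_\rho = v_R v_L$ (and, as a by-product, guarantees $v_L v_R \neq 0$), and a routine spectral-gap estimate handles the remainder. Your approach is shorter and makes the theorem an immediate corollary of Section \ref{lin alg section}; the paper's approach is more self-contained within Section \ref{dynamics section}, reuses the orthogonal-complement argument that it builds on in later proofs, and keeps the structural lemmas about supports of eigenvectors in play, which matters for the subsequent Theorem \ref{incomparable dominant}.
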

\begin{proof}
Because $D_1$ is aperiodic, we have that $D_1$ is primitive.
So by Perron-Frobenius theory, we know that $\rho$ is the unique dominant eigenvalue of $A_1$.
From Lemma \ref{original to components} we know that $v_L^{(1)}$ and $v_R^{(1)}$ are $\rho$-eigenvectors of $A_1$, and by Perron-Frobenius they are the unique dominant eigenvectors of $A_1$.
If there is another left $\rho$-eigenvector $v_*$ of $A$, then $v_*^{(1)} = v_L^{(1)}$.
But then $v_* - v_L$ is a $\rho$-eigenvector of $A$ and $(v_* - v_L)^{(1)}$ is all zeroes, which contradicts Lemma \ref{original to components} and the choice of $A_1$.
This contradiction implies that $v_L$ and $v_R$ are the unique  dominant eigenvectors of $A$.

We claim that we can normalize $v_L$ by a non-zero constant so that $v_L v_R = 1$, as we have assumed.
We will show that $v_L v_R \neq 0$ in two steps: that $v_L^{(1)}v_R^{(1)} \neq 0$ and that $v_L v_R = v_L^{(1)}v_R^{(1)}$.
Perron-Frobenius Theorem to $D_1$ says that $v_L^{(1)}$ and $v_R^{(1)}$ have positive real values in all entries.
Therefore $v_L^{(1)}v_R^{(1)} \neq 0$.
Because $D_1$ is the unique irreducible component with $\rho$ as an eigenvalue, by Corollary \ref{up set, down set} 
\begin{itemize}
	\item if $v_R^{(j)} \neq 0$ then the set of $(D_j, D_1)$-walks is non-empty, and
	\item if $v_L^{(j)} \neq 0$ then the set of $(D_1, D_j)$-walks is non-empty.
\end{itemize}
Recall that when $i \neq j$, we have that the set of $(D_i, D_j)$-walks or the set of $(D_j, D_i)$-walks is empty.
Therefore when $j > 1$ we have that $v_L^{(j)} v_R^{(j)} = 0$.
Moreover, $v_L v_R = \sum_j v_L^{(j)}v_R^{(j)} =  v_L^{(1)}v_R^{(1)} \neq 0$.
Thus the claim is true, and we may assume $v_L v_R = 1$.

At this point forward we may follow the proof of Proposition \ref{primitive behaves nicely}.
\end{proof}

Recall that if $p$ is the period of irreducible digraph $D$, then $D^p$ has $p$ components corresponding to the periodic classes of $D$, and each component induces a primitive digraph.
We are concerned with the digraph $D^p$ when $D$ is not irreducible.

\begin{definition} \label{irreducible dominant def}
Let $D$ be a digraph with an irreducible components $D_1, \ldots D_t$ whose respective adjacency matrices are $A$ and $A_1, \ldots, A_t$.
Fix an index $i$, and let $D_i$ have period $p_i$ and periodic classes $P_{i,1}, \ldots, P_{i,p_i}$.
Let $V_{i,j}$ denote the set of vertices $w$ such that in $D^p$, the set of $(\{w\},P_{i,j})$-walks or the set of $(P_{i,j},\{w\})$-walks is non-empty.
(We allow for walks of length $0$, and so $P_{i,j} \subseteq V_{i,j}$).
For $1 \leq j \leq p_i$, let $D^{\langle i,j \rangle}$ denote the $V_{i,j}$-mask of $D^{p_i}$.
Let $A^{\langle i,j \rangle}$ be the adjacency matrix for $D^{\langle i,j \rangle}$.
\end{definition}

We like to think of the $D^{\langle i, j \rangle}$ ranging over values of $j$ as a partition of the subset of biinfinite walks in $D^{p_i}$ that include a vertex in $D_i$.
Under certain assumptions, the $D^{\langle i, j \rangle}$ for all $i$ and $j$ would then be a partition of the edge shift of $D^{p_i}$.

The ``partition'' of our space has been done carefully.
It is clear that each irreducible component in $D^{\langle i,j \rangle}$ is an irreducible component in $D^{p_i}$.
In the following claim, we show that the edge shifts of $D^{\langle i, j \rangle}$ and $D^{\langle i, j' \rangle}$ have small intersection when $j \neq j'$.

\begin{claim} \label{seperate sets}
We use notation as in Definition \ref{irreducible dominant def}.
If $j \neq j'$, then $P_{i,j} \cap V_{i,j'} = \emptyset$.
\end{claim}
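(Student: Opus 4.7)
The plan is to proceed by contradiction: suppose $v \in P_{i,j} \cap V_{i,j'}$ with $j \neq j'$. By the definition of $V_{i,j'}$, either there is a walk in $D^{p_i}$ from $v$ to some $u \in P_{i,j'}$, or a walk in $D^{p_i}$ from some $u \in P_{i,j'}$ to $v$. The two cases are symmetric (reversing the walk plays the role of reversing the arrow), so I would handle only the first. A walk of length $k$ in $D^{p_i}$ from $v$ to $u$ corresponds to a walk of length $kp_i$ in $D$ from $v$ to $u$, and $k=0$ is harmless since it forces $v=u$ and hence $P_{i,j} = P_{i,j'}$.

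The argument then rests on two standard facts that I would invoke. First, since $D_i$ is an irreducible component (that is, a maximal strongly connected subgraph), any walk in $D$ whose endpoints both lie in $V(D_i)$ stays inside $D_i$: any intermediate vertex would be reachable from and reach $D_i$, contradicting maximality. Thus our walk of length $kp_i$ from $v$ to $u$ is entirely contained in $D_i$. Second, by the definition of the period $p_i$ and the periodic classes, any walk in $D_i$ from a vertex in $P_{i,a}$ to a vertex in $P_{i,b}$ has length congruent to $b-a$ modulo $p_i$.

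Combining these observations, the walk of length $kp_i$ from $P_{i,j}$ to $P_{i,j'}$ inside $D_i$ forces
\[
0 \equiv kp_i \equiv j' - j \pmod{p_i}.
\]
Since $1 \leq j, j' \leq p_i$, this yields $j = j'$, contradicting our hypothesis.

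The main obstacle is really just bookkeeping rather than a genuine difficulty: one must be careful that ``walks in $D^{p_i}$'' translate faithfully to walks in $D$ whose length is a multiple of $p_i$, and that the masking in Definition \ref{irreducible dominant def} does not inadvertently add or remove any relevant walks. Once one confirms that edges of the $V_{i,j'}$-mask of $D^{p_i}$ correspond to length-$p_i$ walks in $D$ between vertices of $V_{i,j'}$, the two facts above make the rest mechanical.
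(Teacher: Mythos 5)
Your proof is correct and rests on the same two ingredients as the paper's argument---maximality of the irreducible component $D_i$ and the periodic-class structure of walk lengths---applied in the opposite order: the paper first uses the periodic decomposition of $(D_i)^{p_i}$ to force the offending walk to leave $D_i$ and then contradicts maximality, whereas you first use maximality to confine the walk to $D_i$ and then contradict the length congruence $kp_i \equiv j'-j \pmod{p_i}$. Either ordering is sound, so this is essentially the same proof.
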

\begin{proof}
By way of contradiction, let $u \in P_{i_j} \cap V_{i,j'}$.
By symmetry, we may assume that $u \in P_{i,j}$, $v \in P_{i,j'}$ and there exists a $(u,v)$-walk $w = w_1,w_2, \ldots, w_k$ in $D^p$.
By Perron-Frobenius theory, we know that $w \not\subseteq (D_i)^p$, and therefore there exists $\ell$ such that $w_\ell \notin V(D_i)$.
By definition of $D^p$, there exists a $(u,v)$-walk in $D$ as 
$$w' = w_1,w_{1,2},w_{1,3}, \ldots,w_{1,p-1},w_2,w_{2,1}, \ldots, w_{k-1,p-2}, w_{k-1,p-1}, w_k.$$
Walk $w'$ implies that in $D$ the set of $(\{w_\ell\}, V(D_i))$-walks and the set of $(V(D_i), \{w_\ell\} )$-walks are non-empty.
But because $w_\ell \notin V(D_i)$, this contradicts that as an irreducible component, $D_i$ is a maximal set that is irreducible.
\end{proof}

At this point, we wish to give an intuitive explanation for what the $V_{i,j}$ represent.

A dominant eigenvector of any matrix $M$ can be found through the \emph{power method}: if $u$ is not orthogonal to the dominant eigenspace, then as $m$ grows $u M^m / \|u M^m \|$ will converge to a vector in the dominant eigenspace.
This intuitively explains why a positive real matrix $A$ has dominant positive real eigenvectors: if we pick $u$ from the high-dimensional space of positive real vectors, then $u M^m / \|u M^m \|$ will be positive and real for all $m$ (there are many issues we are ignoring here; our only goal for this and the next paragraph is intuition).
Now consider how an adjacency matrix $A'$ acts on a row vector $u$: the value in coordinate $w$ of $u A'$ is the value in coordinate $v$ in vector $u$ sum over arcs  $(v,w)$ in the digraph represented by $A'$.
For a fixed $i$ and $j$ as in Definition \ref{irreducible dominant def}, we start with vectors that are positive real in each vertex of $P_{i,j}$ and $0$ everywhere else.
By this interpretation, we see that the vector $u (A')^m$ is nonzero in coordinate $w$ if and only if the set of $(P_{i,j}, \{w\})$-walks of length $m$ in $A'$  is non-empty.
Our definition of $V_{i,j}$ is based on the support of the limit as $m$ grows and $A' = A^{p_i}$.

The sequence of vectors $u M^m / \|u M^m \|$ converges to some dominant eigenvector $v_L^{\langle i,j \rangle}$, but by the nature of $A^{p_i}$ we notice that $v_L^{\langle i,j \rangle}$ is only nonzero in coordinates $w$ such that the set of $(P_{i,j},\{w\})$-walks is non-empty in $A^{p_i}$. 
By a symmetric argument, we intuitively believe that the dominant right eigenvector $v_R^{\langle i,j \rangle}$ that is converged to by starting with nonzero entries only in $P_{i,j}$ has support that is limited to coordinates $w$ such that the set of $(\{w\},P_{i,j})$-walks is non-empty in $A^{p_i}$. 
The $V_{i,j}$ is then defined to be the union of what we think is the support of $v_L^{\langle i,j \rangle}$ and $v_R^{\langle i,j \rangle}$.
Moreover, the nonzero values of $A^{\langle i,j \rangle}$ represent the set of arcs involved in a nonzero summation during the power method.
By this argument, we would expect that $v_L^{\langle i,j \rangle} A^{p_i} = v_L^{\langle i,j \rangle} A^{\langle i,j \rangle}$.

We are doing this because the $A^{\langle i,j \rangle}$ are our digestible chunks of $A$.
Specifically, we will show that they satisfy the assumptions of Theorem \ref{aperiodic reducible}.
And by our above intuition, we expect that the $A^{\langle i,j \rangle}$ will behave like $A$ over a subset of the edge shift of $D$ as partitioned by the $V_{i,j}$.
Our next claim rigorously relates the behavior  of $A^{\langle i,j \rangle}$ to the behavior of $A$ on the desired subspace.

\begin{claim} \label{from subspace to full space}
We use notation as in Definition \ref{irreducible dominant def}.
Suppose $\rho$ is the spectral radius of $A$ and $A_i$.
Furthermore, assume that the set of $\left(V(D_a),V(D_b)\right)$-walks is empty when $\rho$ is an eigenvalue of $A_a$ and $A_b$.
If $v_L^{\langle i,j \rangle}, v_R^{\langle i,j \rangle}$ are left, right $\rho^{p_i}$-eigenvectors of $A^{\langle i,j \rangle}$, then they are left, right $\rho^{p_i}$-eigenvectors of $A^{p_i}$.
\end{claim}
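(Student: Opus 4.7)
The plan is to exploit the mask structure of $A^{\langle i,j \rangle}$, combined with Corollary \ref{up set, down set}, to constrain the support of $v_R^{\langle i,j \rangle}$ (and symmetrically $v_L^{\langle i,j \rangle}$) enough that extending from $A^{\langle i,j \rangle}$ to $A^{p_i}$ contributes nothing outside $V_{i,j}$.

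First, I would identify the dominant irreducible components of $A^{\langle i,j \rangle}$. Its associated digraph is the restriction of $D^{p_i}$ to $V_{i,j}$, with every vertex outside $V_{i,j}$ isolated. By Perron–Frobenius applied to $D_i$, the subdigraph of $D_i^{p_i}$ induced on $P_{i,j}$ is primitive with spectral radius $\rho^{p_i}$, so $P_{i,j}$ is an irreducible component of $A^{\langle i,j \rangle}$ attaining this value. Any other strongly connected component of $A^{\langle i,j \rangle}$ contained in $V_{i,j}$ must lie inside some $D_k$ with $k \neq i$; by Claim \ref{seperate sets} this $D_k$ is distinct from $D_i$, and by the walk-disjointness hypothesis $D_k$ has spectral radius strictly less than $\rho$, so the spectral radius of that component in $D^{p_i}$ is strictly less than $\rho^{p_i}$. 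Hence $P_{i,j}$ is the unique irreducible component of $A^{\langle i,j \rangle}$ for which $\rho^{p_i}$ is an eigenvalue.

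Second, since $\rho^{p_i} \neq 0$ and every row (column) of $A^{\langle i,j \rangle}$ outside $V_{i,j}$ is zero, the support of $v_R^{\langle i,j \rangle}$ is automatically contained in $V_{i,j}$. Applying Corollary \ref{up set, down set} to $A^{\langle i,j \rangle}$ at eigenvalue $\rho^{p_i}$, every coordinate in $\operatorname{supp}(v_R^{\langle i,j \rangle})$ must reach a vertex of $P_{i,j}$ in the associated digraph, and therefore in $D^{p_i}$. The symmetric statement shows each coordinate in $\operatorname{supp}(v_L^{\langle i,j \rangle})$ is reached from some vertex of $P_{i,j}$ in $D^{p_i}$.

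Third, I would verify $A^{p_i} v_R^{\langle i,j \rangle} = \rho^{p_i} v_R^{\langle i,j \rangle}$ coordinate by coordinate. For $u \in V_{i,j}$, the equality reduces to the known identity for $A^{\langle i,j \rangle}$, because $A^{\langle i,j \rangle}$ and $A^{p_i}$ agree on the $V_{i,j}\times V_{i,j}$ block and $v_R^{\langle i,j \rangle}$ vanishes off $V_{i,j}$. The delicate case, and the main obstacle, is $u \notin V_{i,j}$: here I must rule out a nonzero contribution from $\sum_{w}(A^{p_i})_{u,w}(v_R^{\langle i,j \rangle})_w$. If some summand is nonzero, then $(A^{p_i})_{u,w} \neq 0$ and $w \in \operatorname{supp}(v_R^{\langle i,j \rangle})$; by the support bound from step two, $w$ reaches $P_{i,j}$ in $D^{p_i}$, so $u$ reaches $P_{i,j}$ through $w$, forcing $u \in V_{i,j}$, a contradiction. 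The symmetric argument using the left version of Corollary \ref{up set, down set} finishes $v_L^{\langle i,j \rangle} A^{p_i} = \rho^{p_i} v_L^{\langle i,j \rangle}$.
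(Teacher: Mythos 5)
Your proposal is correct and follows essentially the same route as the paper: both restrict attention to showing the masked and unmasked matrices act identically on the eigenvector, handle coordinates inside $V_{i,j}$ by the agreement of the two matrices on that block, and handle coordinates outside $V_{i,j}$ by using Corollary \ref{up set, down set} (after noting $P_{i,j}$ is the unique dominant irreducible component of $A^{\langle i,j \rangle}$) to confine the eigenvector's support to vertices reaching (or reached from) $P_{i,j}$, then concatenating the offending arc with such a walk to derive a contradiction. The only difference is cosmetic: you argue the right-eigenvector case and cite symmetry for the left, while the paper does the reverse.
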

\begin{proof}
We prove this for $v_L^{\langle i,j \rangle}$; the other case is symmetric.
Consider how an adjacency matrix $A'$ acts on a row vector $u$: the value in coordinate $w$ of $u A'$ is the sum of the value in coordinates $v$ of vector $u$ for each arc $(v,w)$ in the digraph represented by $A'$.
The adjacency matrix $A^{\langle i,j \rangle}$ is the adjacency matrix $A^{p_i}$ with $0$ put in entries involving vertices outside of $V_{i,j}$.
If $u$ is a row vector whose support is restricted to $V_{i,j}$ (an assumption that applies to $v_L^{\langle i,j \rangle}$) and $w \in V_{i,j}$, then the $w$ coordinate of $u A^{p_i}$ and the $w$ coordinate of $u A^{p_i}$ are the same, as they are a sum of terms in coordinate $v$ for arcs $(v,w)$ based on two cases: (1) by assumption the term in coordinate $v$ is $0$ if $v \notin V_{i,j}$, and (2) the arc is equally represented by $A^{p_i}$ and $A^{\langle i,j \rangle}$ if $v \in V_{i,j}$.

So to prove the claim, we need to show that if $u$ is a row vector whose support is restricted to $V_{i,j}$ and $w \notin V_{i,j}$, then the $w$ coordinate of $v_L^{\langle i,j \rangle} A^{p_i}$ and the $w$ coordinate of $v_L^{\langle i,j \rangle} A^{\langle i,j \rangle}$ are the same.
It is clear that the $w$ coordinate of $v_L^{\langle i,j \rangle} A^{\langle i,j \rangle}$ is $0$ in this case, as it is outside of the restricted domain of $V_{i,j}$.
We will show that if there exists an arc $(v,w)$ in $D^{p_i}$ such that $v_L^{\langle i,j \rangle}$ is nonzero in coordinate $v$, then $w \in V_{i,j}$, which will prove the claim.

Each irreducible component in $D^{\langle i,j \rangle}$ is an irreducible component in $D^{p_i}$, and so by assumption $D^{\langle i,j \rangle}$ contains exactly one irreducible component with $\rho^{p_i}$ as an eigenvalue.
The support of that irreducible component is $P_{i,j} \subset D_i$.
By Corollary \ref{up set, down set}, if $v_L^{\langle i,j \rangle}$ is nonzero in coordinate $v$, then the set of $(P_{i,j}, \{v\})$-walks in $D^{\langle i,j \rangle}$ is non-empty.
Let $v' = w_1, \ldots, w_\ell = v$ be a $(P_{i,j}, \{v\})$-walk (so $v' \in P_{i,j}$); in combination with arc $(v,w)$ we then have a $(P_{i,j}, \{w\})$-walk: $w_1, \ldots, w_\ell, w$.
\end{proof}

\begin{theorem} \label{incomparable dominant}
We use the notation as in Definition \ref{irreducible dominant def}.
Let $\rho$ be spectral radius of $A$, $s$ the number of irreducible components with spectral radius $\rho$, and order the irreducible components such that $\rho$ is the spectral radius of $A_i$ if and only if $i \leq s$.
Let $P = \prod_{i=1}^s p_i$.
For $1 \leq i \leq s$, let $v_L^{[ i ]}, v_R^{[ i ]}$ be left, right $\rho$-eigenvectors of $A_i$ with all real values normalized such that $v_L^{[ i ]} v_R^{[ i ]} = p_i$.
For $1 \leq i \leq s$ and $1 \leq j \leq p_i$, let $v_L^{\langle i,j \rangle}, v_R^{\langle i,j \rangle}$ be left, right $\rho^{p_i}$-eigenvectors of $A^{\langle i,j \rangle}$, normalized such that $v_L^{\langle i,j \rangle}$ and $v_R^{\langle i,j \rangle}$ give the same values as $v_L^{[ i ]}, v_R^{[ i ]}$ over the domain $D_{i}$.
If for all $1 \leq a,b \leq s$ the set of $\left(V(D_a),V(D_b)\right)$-walks is empty, then for each row vector $w_L$, column vector $w_R$, and integer $k$ there exists $C, \epsilon > 0$ such that 
$$ w_L A^{Pm + k} w_R = \left(q(m) + \sum_{i=1}^s \sum_{j =1 }^{p_i} (w_L v_R^{\langle i,j \rangle})(v_L^{\langle i,j+k \rangle}w_R) \right) \rho^{Pm+k}, $$
where $q(m) < C ( 1 - \epsilon)^m$.
\end{theorem}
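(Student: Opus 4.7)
The plan is to combine the periodic-class argument of Proposition \ref{periodic behaves nicely} with the multi-component argument of Theorem \ref{aperiodic reducible}, with Claims \ref{seperate sets} and \ref{from subspace to full space} serving as the bridge. First I would verify that Theorem \ref{aperiodic reducible} applies to each $A^{\langle i, j \rangle}$ at eigenvalue $\rho^{p_i}$: every irreducible component of $A^{\langle i, j \rangle}$ is either a subset of an irreducible component of $A^{p_i}$ lying in $V_{i,j}$, or a singleton outside $V_{i,j}$ (contributing only eigenvalue $0$); and a periodic class $P_{i', j'}$ contained in $V_{i, j}$ must satisfy $(i', j') = (i, j)$, because otherwise Claim \ref{seperate sets} (if $i' = i$) or the incomparability assumption on dominant components (if $i' \neq i$) would be violated. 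So the unique dominant irreducible component of $A^{\langle i, j \rangle}$ is the primitive one supported on $P_{i, j}$, and Theorem \ref{aperiodic reducible} yields
\[
w_L^* (A^{\langle i, j \rangle})^m w_R^* = \rho^{p_i m}\left[(w_L^* v_R^{\langle i, j \rangle})(v_L^{\langle i, j \rangle} w_R^*) + p_{i,j}(m)\right]
\]
for arbitrary $w_L^*, w_R^*$, with exponentially decaying $p_{i,j}(m)$.

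Next I would lift these per-mask estimates back to $A$. Since $P/p_i$ is an integer, $A^{Pm} = (A^{p_i})^{(P/p_i)m}$, and by Claim \ref{from subspace to full space} each $v_R^{\langle i, j \rangle}$ is a $\rho^{p_i}$-eigenvector not only of $A^{\langle i, j \rangle}$ but of $A^{p_i}$ itself. Following the cyclic bookkeeping of Proposition \ref{periodic behaves nicely}, I would decompose $w_R$ and $w_L$ by the $V_{i, j}$-masks; multiplication by $A^k$ shifts the periodic-class index by $k$ modulo $p_i$, producing the $v_L^{\langle i, j+k \rangle} w_R$ factor in the stated formula. Summing over $i \in \{1, \ldots, s\}$ and $j \in \{1, \ldots, p_i\}$ then recovers the claimed principal term.

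The main obstacle is controlling the residual and the cross-terms between distinct pairs. Because the hypotheses force $\rho$ to have index $1$ (by Rothblum's theorem recalled just before the statement), Corollary \ref{asymp growth for lin alg} combined with Corollary \ref{up set, down set} forces the dominant spectral subspace of $A^P$ to be spanned exactly by the $v_R^{\langle i, j \rangle}$, so any complementary component decays like $C(1-\epsilon)^m \rho^{Pm+k}$ and is absorbed into $q(m)$. Cross-terms between $(i, j)$ and $(i', j')$ with $i \neq i'$ vanish because the incomparability hypothesis kills the bilinear pairing $v_L^{\langle i, j \rangle} v_R^{\langle i', j' \rangle}$ (their shared support contains no vertex of any dominant irreducible component), while within a fixed $i$ the DFT over $\mathbb{Z}/p_i\mathbb{Z}$ realizes the $v_R^{\langle i, j \rangle}, v_L^{\langle i, j \rangle}$ as linear combinations of genuine eigenvectors of $A$ at the $p_i$ distinct eigenvalues $\rho \zeta$ with $\zeta^{p_i} = 1$, so Proposition \ref{orthogonal} supplies the orthogonality that aligns the within-$i$ pairings with the cyclic shift by $k$. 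Carrying out this bookkeeping cleanly, so that every structural hypothesis is used exactly once, is where the proof will take its real effort.
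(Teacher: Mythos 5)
Your plan is essentially the paper's proof: the core decomposition is identical (apply Theorem \ref{aperiodic reducible} to each mask $A^{\langle i,j\rangle}$ after checking its unique dominant irreducible component is the primitive one on $P_{i,j}$, lift eigenvectors via Claim \ref{from subspace to full space}, and realize the shift $v_L^{\langle i,j\rangle}A^k=\rho^k v_L^{\langle i,j+k\rangle}$ by writing the $v_L^{\langle i,j\rangle}$ as a DFT over $\mathbb{Z}/p_i\mathbb{Z}$ of genuine eigenvectors of $A^{\langle i\rangle}$ at the eigenvalues $\rho\zeta$, $\zeta^{p_i}=1$ --- the paper's ``third claim'' verbatim). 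The one place you diverge is the error term. The paper controls it combinatorially: $A^{Pm+k}-\sum_i (A^{\langle i\rangle})^{Pm+k}$ counts walks confined to $D^{\langle *\rangle}$ (double-counted walks land there too, by incomparability and Claim \ref{seperate sets}), and the spectral radius of $D^{\langle *\rangle}$ is strictly below $\rho$, so the residual is exponentially small entrywise. You instead argue spectrally via Corollary \ref{asymp growth for lin alg}: the index of $\rho$ is $1$, the $v_R^{\langle i,j\rangle}$ span the dominant eigenspace of $A^P$, and the complementary spectral component decays. Both work; the paper's route avoids having to verify that the $\sum_{i\le s}p_i$ vectors $v_R^{\langle i,j\rangle}$ exhaust the $\rho^P$-eigenspace of $A^P$ and form a dual system with the $v_L^{\langle i,j\rangle}$ (which your route needs, and which is where the ``used exactly once'' bookkeeping you mention actually lives), while your route gives the cleaner conceptual statement that the limit matrix is precisely the spectral projector $\sum_{i,j} v_R^{\langle i,j\rangle}v_L^{\langle i,j\rangle}$ and ties the theorem back to Theorem \ref{outerproduct}. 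Either way the argument closes, so I see no gap --- only unfinished bookkeeping you have correctly identified as the remaining labor.
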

\begin{proof}
For $1 \leq i \leq s$, let $V_i$ denote the set of vertices $w$ such that the set of $(\{w\},D_i)$-walks or the set of $(D_i,\{w\})$-walks is non-empty.
Let $D^{\langle i \rangle}$ denote the subdigraph of $D$ induced on $V_i$.
By construction, we have that $\left(D^{\langle i \rangle}\right)^{p_i} = \cup_j D^{\langle i,j \rangle}$.
Let $A^{\langle i \rangle}$ be the adjacency matrix for $D^{\langle i \rangle}$.
Let $D^{\langle * \rangle}$ be the subdigraph of $D$ induced on $\sum_{i > s} D_i$, and let $A^{\langle * \rangle}$ be the adjacency matrix of $D^{\langle * \rangle}$.

The proof of Theorem \ref{incomparable dominant} is broken into three main claims.
In the first claim, we state that the behavior of $D$ is dominated by the behavior of the individual $D^{\langle i \rangle}$ summed across $1 \leq i \leq s$.
In the second claim, we state that the behavior of $(D^{\langle i \rangle})^{p_i}$ is dominated by the behavior of the individual $D^{\langle i,j \rangle}$ summed across $1 \leq j \leq p_i$.
In the third claim, we state that the $v_L^{\langle i,j \rangle}, v_R^{\langle i,j \rangle}$ behave in the same manner as the $v_L^{(i)}, v_R^{(i)}$ in Proposition \ref{periodic behaves nicely} when acted on by $A$.
The proof concludes with the trick from Proposition \ref{periodic behaves nicely} using $w_* = A^kw_R$.

First, we claim that there exists a function $q'$ with constants $C', \epsilon' > 0$ such that $q'(m) < C'(1-\epsilon')^m$ for all $m$, and 
$$ \left\| w_L A^{Pm + k} w_R - w_L \sum_{i=1}^s \left(A^{\langle i \rangle} \right)^{Pm + k} w_R \right\| < q'(m)\rho^{pn+k}.$$
Recall that $v_I^TA^mv_F$ is the number of walks from $I$ to $F$ of length $m$; so the $(i,f)$ entry of $A^{pm + k}$ is the number of walks from vertex $i$ to vertex $f$ of length $pm + k$.
The formula for $ w_L A^{pm + k} w_R$ is linear; so if we are correct for each entry of $A^{pm + k}$ then we are correct overall.
The presentation of the proof to our first claim will be considerably easier in the notation of walks in a digraph.

The walks counted by $\| A^{Pm + k} - \sum_{i=1}^s \left(A^{\langle i \rangle} \right)^{Pm + k} \|$ fall into two categories: (1) walks in $D^{\langle * \rangle}$ and (2) walks in $D^{\langle j \rangle} \cap D^{\langle j' \rangle}$.
By our assumption that for all $1 \leq a,b \leq s$ the set of $\left(V(D_a),V(D_b)\right)$-walks is empty, we see that (2) is a sub-condition of (1).
Let $P_i(x)$ be the characteristic function of $A_i$, so that if $P(x)$ is the characteristic function of $A$, then $P(x) = \prod_{i=1}^t P_i(x)$.
The irreducible components of $D^{\langle * \rangle}$ are exactly $D_{s+1}, \ldots, D_t$ by construction, and so if $P_*(x)$ is the characteristic function of $D^{\langle * \rangle}$, then $P_*(x) = \prod_{i=s+1}^t P_i(x)$.
Let $\lambda_*$ be the largest root of $P_*(x)$, and so $\lambda_* < \rho$.
Let $\epsilon' = 0.5(\lambda - \lambda_*)$ and $A_*$ be the adjacency matrix for $D^{\langle * \rangle}$.
It is well known (for example, see \cite{Parker_Yancey_Yancey}) that $\limsup_{m \rightarrow \infty} m^{-1} \log( w_L A_*^m w_R ) \leq \lambda_*$ for fixed vectors $w_L, w_R$.
The claim therefore follows.

The next claim that we wish to argue is that for each $1 \leq i \leq s$ we have 
$$ \left\| w_L \left(A^{\langle i \rangle}\right)^{p_i m} w_R - w_L \sum_{j=1}^{p_i} \left(A^{\langle i, j \rangle} \right)^{m} w_R \right\| < q''(m)\rho^{pn+k},$$
where $q''(m) \leq C''(1 - \epsilon '')^m$ for some $C'', \epsilon'' > 0$.
Recall that $\left(D^{\langle i \rangle}\right)^{p_i} = \cup_j D^{\langle i,j \rangle}$, so the walks counted by $\|\left(A^{\langle i \rangle}\right)^{p_i} - \sum_{j=1}^{p_i} \left(A^{\langle i, j \rangle} \right)^{m}\|$ are those contained in $D^{\langle i,j \rangle} \cap D^{\langle i,j' \rangle} $ for some $j \neq j'$.
By Claim \ref{seperate sets}, walks in $D^{\langle i,j \rangle} \cap D^{\langle i,j' \rangle} $ are contained in $D^{\langle * \rangle}$.
We have already made the argument in our first claim that these walks are of a smaller order, and therefore this second claim is concluded.

The third claim we wish to prove is that $v_L^{\langle i,j \rangle} A^k = v_L^{\langle i,j+k \rangle} \rho^k$.
Before we do so, we explain how this final claim implies the theorem.
Let $w_* = A^k w_R$.
By our first two claims, we have that 
\begin{eqnarray*}
	w_L A^{Pm + k} w_R	& = &	w_L A^{Pm} w_* \\
				& = & w_L \sum_{i,j} \left(A^{\langle i, j \rangle}\right)^{mP/p_i} w_* + (q'(m) + q''(m))\rho^{Pm+k}\\  
				& = & \left(q'''(m) + \sum_{i=1}^s \sum_{j =1 }^{p_i} (w_L v_R^{\langle i,j \rangle})(v_L^{\langle i,j \rangle} w_*) \right) \rho^{Pm} + (q'(m) + q''(m))\rho^{Pm+k},
\end{eqnarray*}
where the last equality comes from applying Theorem \ref{aperiodic reducible} to each $D^{\langle i,j \rangle}$.
So the last claim implies the theorem with $q(m) = q'(m) + q''(m) + q'''(m)$.

We wish to show that $v_L^{\langle i,j \rangle} A = v_L^{\langle i,j+1 \rangle} \rho$ for $1 \leq i \leq s$.
Recall that $D^{\langle i,j \rangle}$ is a subgraph of $(D^{\langle i \rangle})^{p_i}$.
By Claim \ref{from subspace to full space}, $v_L^{\langle i,j \rangle}$ are left $\rho^{p_i}$-eigenvectors of $(A^{\langle i \rangle})^{p_i}$.
Because the $v_L^{\langle i,j \rangle}$ have disjoint support on $D_i$ by Claim \ref{seperate sets}, we have that the $v_L^{\langle i,j \rangle}$ form a basis for a subspace of left $\rho^{p_i}$-eigenvectors with dimension $p_i$.

A $\lambda'$-eigenvector of $D_i$ is a $(\lambda')^{p_i}$-eigenvector of $(D^{\langle i \rangle})^{p_i}$.
By construction, the only irreducible component in $D^{\langle i \rangle}$ with $\rho$ as an eigenvalue is $D_i$.
By Perron-Frobenius theory, the eigenvalues $\lambda'$ of $A_i$ such that $\|\lambda'\| = \rho$ are exactly $\lambda' = \rho w$, where $w^{p_i} = 1$, and they each have multiplicity $1$.
Let $w_*$ be a primitive root of $x^{p_i} - 1$, and let $u^{[i, j]}$ be the left $(w^j \rho)$-eigenvector of $A^{\langle i \rangle}$.
The set of $\rho^{p_i}$-eigenvectors of $(A^{\langle i \rangle})^{p_i}$ is a space with dimension $p_i$.
Moreover, we have two basii for this space: the $v_L^{\langle i,j \rangle}$ form the first basis and the $u^{[i, j]}$ form the second basis.

We wish to write a transformation formula between the two basii: $v_L^{\langle i,j \rangle} = \sum_{j' = 1}^{p_i} c_{j, j'} u^{[i, j']}$.
To determine the coefficients $c_{j,j'}$ we project these vectors into the restricted domain $V(D_i)$.
By Lemma \ref{original to components}, each eigenvector from either basii will induce an eigenvector of $(A_i)^{p_i}$.
Moreover, because $D_i$ is an irreducible digraph, we know a characterization of the $\rho^{p_i}$-eigenvectors of $(A_i)^{p_i}$.
First, $u^{[i, 0]}$ restricted to $V(D_i)$ gives us the positive real eigenvector $v_L^{[i]}$.
Moreover, we have that $(u^{[i, j]})^{(t)} = w^{-jt}(u^{[i, 0]})^{(t)}$.
By construction, the $v_L^{\langle i,j \rangle}$ are zero in all coordinates except those in $P_{i,j}$.
The transformation becomes clear then: we have that $v_L^{\langle i,j \rangle} = c_j \sum_{j' = 1}^{p_i} w^{jj'} u^{[i, j']}$.

To calculate $c_j$, note that in our above application of Theorem \ref{aperiodic reducible} we require $v_L^{\langle i,j \rangle}v_R^{\langle i,j \rangle} = 1$.
By our argument in Proposition \ref{periodic behaves nicely}, it follows that $c_j = p_i ^{-1}$ is the correct coefficient to satisfy this constraint.

With this transformation, we can directly calculate
\begin{eqnarray*}
 v_L^{\langle i,j \rangle} A		& = &	\left( p_i ^{-1} \sum_{j' = 1}^{p_i} w^{jj'} u^{[i, j']} \right) A 	\\
					& = & 	p_i ^{-1} \sum_{j' = 1}^{p_i} w^{jj'} \left(u^{[i, j']} A \right) 	\\
					& = & 	p_i ^{-1} \sum_{j' = 1}^{p_i} w^{jj'} u^{[i, j']} \lambda w^{j'}  	\\
					& = & \rho	p_i ^{-1} \sum_{j' = 1}^{p_i} w^{(j+1)j'} u^{[i, j']}    	\\
					& = & \rho v_L^{\langle i,j+1 \rangle} .
\end{eqnarray*}

\end{proof}

The standard comment about reducible digraphs is that perturbation theory implies that a weaker version of most of what is known in Perron-Frobenius theory is true.
For example, this comment implies that every nonnegative real matrix has a nonnegative dominant eigenvector associated to a nonnegative real eigenvalue.
As we have seen, this approach is a rather shallow treatment of such a deep and beautiful area (consider the polynomials $p_i(x)$).
We will use this comment for one last statement.

\begin{remark}
The entries in the eigenvectors in Theorem \ref{incomparable dominant} are nonnegative real values.
\end{remark}


\bibliographystyle{alpha}
\bibliography{MatrixAsymp}

\end{document}